\def\RR{{\bf R}}
\numberwithin{equation}{section}
\newtheorem{thm}{Theorem}[section]
\newtheorem{prop}[thm]{Proposition}
\newtheorem{lem}[thm]{Lemma}
\theoremstyle{definition}
\newtheorem{defn}[thm]{Definition}
\newtheorem{ex}[thm]{Example}
\newcommand*{\joinIrr}{$\vee$-irreducible }
\newcommand*{\joinIrrLast}{$\vee$-irreducible}
\newcommand*{\joinSub}{$(\wedge, \vee)$-closed}
\newcommand*{\LIrr}{$L^{\mathrm{ir}}$ }
\newcommand{\power}[1]{2^{#1}}
\newcommand*{\MnElem}{\mathcal{M}_{n}}
\newcommand{\coll}[3]{C\left(#1,#2,#3\right)}
\newcommand*{\collSym}{C}
\newcommand{\CIde}[1]{\mathcal{C}\left(#1\right)}
\newcommand{\Subsp}[1]{\mathcal{S}\left(#1\right)}
\newcommand{\Csub}[1]{\mathcal{CS}\left(#1\right)}
\newcommand{\PIP}[1]{\mathrm{PIP}\left(#1\right)}
\newcommand{\POS}[1]{\mathrm{PS}\left(#1\right)}
\newcommand{\PPIP}[1]{\mathrm{P}\left(#1\right)}
\newcommand{\app}[2]{#1\left( #2 \right)}
\newcommand{\base}[2]{\bm{e}_{#2}^{#1}}
\newcommand{\subseteqInd}[1]{S^{(#1)}}
\newcommand{\Lirr}[0]{L^{\mathrm{ir}}}
\newcommand{\Birr}[0]{B^{\mathrm{ir}}}
\begin{document}

\title{A Compact Representation \\ for Modular Semilattices and Its Applications}

\author{Hiroshi HIRAI}
\author{So NAKASHIMA}
\affil{%
  Department of Mathematical Informatics,\\%
  Graduate School of Information Science and Technology,\\%
  The University of Tokyo, Tokyo, 113-8656, Japan\\%
  Email: \{\href{hirai@mist.i.u-tokyo.ac.jp}{\texttt{hirai}},%
           \href{so_nakashima@mist.i.u-tokyo.ac.jp}{\texttt{so\_nakashima}}%
         \}\texttt{@mist.i.u-tokyo.ac.jp}%
}

\maketitle

\begin{abstract}
A modular semilattice is 
a semilattice generalization of a modular lattice.
We establish a Birkhoff-type representation theorem 
for modular semilattices, which says that 
every modular semilattice is isomorphic 
to the family of ideals in a certain poset with additional relations.
This new poset structure, which we axiomatize in this paper, 
is called a PPIP (projective poset with inconsistent pairs).
A PPIP is a common generalization of a PIP (poset with inconsistent pairs) and a projective ordered space.
The former was introduced by Barth\'{e}lemy and Constantin for establishing Birkhoff-type theorem for median semilattices, 
and the latter by Herrmann, Pickering, and Roddy for modular lattices.
We show the $\Theta (n)$ representation complexity
and a construction algorithm for 
PPIP-representations of \joinSub{} sets 
in the product $L^n$ of modular semilattice $L$.
This generalizes the results of Hirai and Oki 
for a special median semilattice $S_k$.
We also investigate implicational bases for modular semilattices.
Extending earlier results of Wild and Herrmann for modular lattices, 
we determine optimal implicational bases 
and develop a polynomial time recognition algorithm  for modular semilattices.
These results 
can be applied to retain the minimizer set 
of a submodular function on a modular semilattice.
\end{abstract}

Keywords: modular semilattice, Birkhoff representation theorem, implicational base, submodular function.

\section{Introduction}\label{S:intro}

The Birkhoff representation theorem says that
every distributive lattice is isomorphic to the family of ideals 
in a poset (partially ordered set).
This representation of a distributive lattice $L$
is {\em compact} in the sense that 
the cardinality of the poset is at most the height of $L$, and consequently
has brought  numerous algorithmic successes in discrete applied mathematics.
The family of all stable matchings in the stable matching problem
forms a distributive lattice, and is compactly  represented 
by a poset.
Several algorithmic problems on stable matchings 
are elegantly solved by utilizing this poset representation~\cite{gusfield1989stable}.
The family of minimum $s$-$t$ cuts in a network forms a distributive lattice. More generally, the family of minimizers 
of a {\em submodular set function} is a distributive lattice, and 
admits such a compact representation; see~\cite{fujishige2005submodular}.
This fact grew up to theory of {\it principal partitions of submodular systems},
which is a general decomposition paradigm
for graphs, matrices, and matroids~\cite{iri1982structure,fujishige2009theory}.
A canonical block-triangular form of a matrix by means 
of row and column permutations, 
known as the {\em Dulmage-Mendelsohn decomposition 
(DM-decomposition)}, 
is obtained via a maximal chain of 
the family of minimizers of a submodular function, 
in which a maximal chain corresponds 
to a topological order of the poset representation of the family.
The DM-decomposition is further generalized to 
the {\em combinatorial canonical form (CCF)} of  
a {\em mixed matrix} \cite{murota2000matrices, murota1987ccf}, 
which is also built on the same idea.

The present paper addresses Birkhoff-type 
compact representations for lattices and semilattices 
{\em beyond} distributive lattices.
Here, by a compact representation of lattice or semilattice $L$, 
we naively mean a  structure whose size 
is smaller than the size of $L$ 
and from which the original lattice structure can be recovered.
Some of the previous works relating this subject are explained as follows.

{\em Median semilattices} are a semilattice generalization of a distributive lattice, in which every principal ideal is a distributive lattice.
Barth\'{e}lemy and Constantin \cite{barthlemy1993median} 
established a Birkhoff-type representation theorem for median semilattices. Their theorem says that every median semilattice is compactly represented by,  or more specifically, 
is isomorphic to the family of special ideals of a poset with an additional relation, 
called an {\em inconsistency relation}.
This structure is called a {\em poset with inconsistent pairs (PIP)}, which was also independently introduced  by Nielsen, Plotkin, and Winskel \cite{nielsen1981petri}
as a model of concurrency in theoretical computer science, 
and recently rediscovered  by Ardila, Owen, and Sullivant \cite{ardila2012geodesics} 
from the state complex of robot motion planning; 
the name PIP is due to them.
Hirai and Oki~\cite{hirai2016lecture} applied PIP
to represent the minimizer set of a {\em $k$-submodular function}~\cite{huber2012toward}, 
which is a generalization of a submodular set function defined on 
the product ${S_k}^n$ of a special median semilattice 
$S_k$ (consisting of $k+1$ elements). 
They obtained several basic algorithmic results for this PIP-representation. 

{\em Modular lattices} are a well-known lattice class that includes distributive lattices. 
Herrmann, Pickering, and Roddy \cite{herrmann1994a} established
a Birkhoff-type representation theorem for modular lattices, 
which says that
every modular lattice is isomorphic to 
the family of special ideals of a poset with an additional ternary relation, called a {\em collinearity relation}.
This structure is called a \textit{projective ordered space}, and is viewed as a generalization of a {\em projective space}, 
which is a fundamental class of incidence geometries~\cite{ueberberg2011foundation}.

A theory of {\em implicational systems}  (or {\em Horn formulas})~\cite{wild1994theory}
also provides a theoretical basis of compact representations of lattice and semilattice; see recent survey~\cite{wild2016the}. 
Wild~\cite{wild2000optimal}
determines an optimal implicational base 
(or a minimum-size Horn formula) 
of a modular lattice $L$,  where
$L$ is regarded as a {\em closure system} ${\cal F} \subseteq 2^{E}$ on a suitable set $E$.
Consequently, an optimal base is obtained in polynomial time,
when $L$ is given by implications on $\vee$-irreducible elements.
This result is remarkable 
since obtaining an optimal implicational base is NP-hard in general~\cite{maier1980minimum}.
Subsequently,
by utilizing the axiom of projective ordered space, 
Herrman and Wild~\cite{herrmann1996polynomial} developed a polynomial time algorithm to decide whether 
a closure system given by implications is a modular lattice.

The goal of the paper is to generalize these results to {\em modular semilattices}.
This common generalization of median semilattices and modular lattices
was first appeared in Bandelt, van de Vel, and Verheul~\cite{bandelt1993modular}.
Recently,  modular semilattices 
have unexpectedly emerged from
several well-behaved classes of  
combinatorial optimization problems,  and 
been being recognized as 
a next stage on which 
submodular function theory should be developed \cite{Hirai0ext, hirai2016lconvexity}. 
The motivation of this paper 
comes from  these emergences and future contribution of modular semilattices 
in combinatorial optimization.

The results and the organization of this paper are 
outlined as follows:
\paragraph{Section 2:}
We establish a Birkhoff-type representation theorem for modular semilattices: 
Generalizing PIP and projective ordered space, 
we formulate 
the axiom of a new structure {\em PPIP} (projective poset with inconsistency relation), which
 is a certain poset endowed with both inconsistency and collinearity relations.
We prove a one-to-one correspondence 
between modular semilattices and PPIPs (Theorem \ref{theo:BirkhoffSemiModular}).
While projective ordered spaces 
generalize projective geometries, 
PPIP generalizes {\em polar spaces}, which are another fundamental class of incidence geometries.

\paragraph{Section 3:}
A typical emergence of a modular semilattice 
is as a $(\vee, \wedge)$-closed set $B$ in the product $L^n$ 
of a (very small) modular semilattice $L$. 
We investigate the representation complexity of such a modular semilattice $B$. 
We show that 
the number of $\vee$-irreducible elements of $B$ is bounded by $n$ times of the number of \joinIrr elements of $L$ (Theorem \ref{thm:upperBound}).
This attains a lower bound by Berman et al.\cite{berman2009varieties} (Theorem \ref{thm:bermannBoundForSemilattice}), and 
implies that the PPIP-representation for $B$ 
is actually compact (i.e., has a polynomial size in $n$) 
provided the size of $L$ is fixed. 
We give a polynomial time algorithm to construct PPIP 
assuming a membership oracle of $B$ (Theorem \ref{thm:calcPPIP}), which is applied to the minimizer set of a submodular function on $L^n$.
These generalize the results of Hirai and Oki~\cite{hirai2016lecture} 
for the case of $L = S_k$.

\paragraph{Section 4:} Extending Wild's result, we determine 
an optimal implicational base of a modular semilattice 
viewed as a $\cup$-closed family (Theorem \ref{thm:pesudoclosedSetForSemiModular}). 
This naturally leads to a polynomial time algorithm to obtain an optimal
implicational base of a modular semilattice
given by implications on its $\vee$-irreducible elements.
Utilizing the axiom of PPIP, 
we develop a polynomial time recognition algorithm for modular semilattices given by implications (Theorem \ref{thm:RecognizeSemiModular}), which 
is also an extension of the algorithm 
by Herrman and Wild~\cite{herrmann1996polynomial} 
for modular lattices.

\paragraph{Section 5:}
We mention possible applications of these results to (i)
the computation of the PPIP-representation of 
the minimizer set of a submodular function on a modular semilattice and (ii) a canonical block-triangularization of a partitioned matrix~\cite{ito1994blocktriangularizations}, 
which is a further generalization of the DM-decomposition.

\paragraph{Notation}
We use a standard terminology on posets and lattices.
Let $P$ be a poset. 
A subset $X \subseteq P$ is called an \textit{ideal} if $p \leq p'$ and $p' \in X$ implies $p \in X$.
The \textit{principal ideal} of $x$, denoted by $I_x$, is the ideal $\{p \in P \mid p \leq x\}$.
In this paper, semilattices are $\wedge$-semilattices.
Let $L$ be a semilattice. 
Note that the join $x \vee y$ exists if and only if there is a common upper bound of $x$ and $y$.
We say that $l \in L$ is \textit{\joinIrr}if $l = a \vee b$ means $l = a$ or $l = b$.
For a semilattice $L$, let \LIrr{} denote the family of \joinIrr elements of $L$, where
\LIrr{} is regarded as a poset with the partial order derived from $L$.
We denote $\{1,2, \dots, n\}$ by $[n]$. The symbol $|A|$ designates the cardinality of a set $A$. 
\section{Birkhoff-type representation}
\label{sec:3}
In this section, we introduce the concept of PPIP and establish a Birkhoff-type representation theorem for modular semilattices.
After quickly reviewing previous results for median semilattices and modular lattices in Sections~\ref{sec:3.1} and \ref{sec:3.2}, 
we introduce PPIP and give the representation theorem  (Theorem \ref{theo:BirkhoffSemiModular})
in Section~\ref{sec:3.3}. 
The proof of Theorem~\ref{theo:BirkhoffSemiModular} is given in Section~\ref{sec:3.4}.
Throughout this section, 
all posets and semilattices are assumed to have finite rank.

A semilattice $L$ is said to be \textit{modular} \cite{bandelt1993modular} if every principal ideal is a modular lattice, and for every $x,y,z \in L$, the join $x \vee y \vee z$ exists provided $x \vee y$, $y \vee z$, and $z \vee x$ exist.
A \textit{median semilattice} \cite{sholander1954medians} is a modular semilattice each of whose principal ideal is distributive. 
\subsection{Median semilattice and PIP}
\label{sec:3.1}
Here we introduce PIPs and explain a Birkhoff-type representation theorem for median semilattices.
A key tool for the compact representation is a poset endowed with an additional relation.

Let $P$ be a poset. A symmetric binary relation $\smile$ defined on $P$ is called an \textit{inconsistency relation}  if the following conditions are satisfied:
\begin{description}
\item[(IC1)] there are no common upper bounds of $p$ and $q$ provided $p \smile q$;
\item[(IC2)] if $p \smile q$, $p \leq p'$, and $q \leq q'$, then $p' \smile q'$. 
\end{description}
An \textit{inconsistent pair} is a pair $(x,y) \in P^2$ such that $x \smile y$.
A subset $X \subseteq P$ without inconsistent pairs is said to be \textit{consistent}.
\begin{defn}
A \textit{PIP} is a poset endowed with an inconsistency relation.
\end{defn}
Any semilattice $L$ induces an inconsistency relation $\smile$ on the set \LIrr
of \joinIrr elements of $L$.
Define $\smile$ by:  x $\smile$ y holds if and only if $x \vee y$ does not exist.
Then $\smile$ is indeed an inconsistency relation \cite{barthlemy1993median}, and \LIrr becomes a PIP.
To recover $L$ from PIP $\Lirr$,
we introduce the notion of \textit{consistent ideals} in PIP.
Let $P$ be a PIP.
A \textit{consistent ideal} is an ideal without no inconsistent pairs.
Let $\CIde{P}$ denote the family of consistent ideals in $P$.

For a median semilattice $L$, let $\PIP{L}$ denote the PIP on \LIrr with the
induced inconsistency relation $\smile$.
The following theorem establishes Birkhoff-type representation for median semilattices.
\begin{thm}[\cite{barthlemy1993median}]
\begin{itemize}
  \item[{\rm (1)}] Let $L$ be a median semilattice.
        Then $\PIP{L}$ is a PIP, and $\CIde{\PIP{L}}$ is isomorphic to $L$.
  \item[{\rm (2)}] Let $P$ be a PIP. 
        Then $\CIde{P}$ is a median semilattice, and $\PIP{\CIde{P}}$ is isomorphic to $P$.
\end{itemize}
\end{thm}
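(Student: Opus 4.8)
The plan is to prove the two statements essentially in parallel, since they are adjoint to each other, and to build on the two routine verifications that the constructions $L \mapsto \PIP{L}$ and $P \mapsto \CIde{P}$ are well-defined. First I would record the easy direction of (1): that the relation $\smile$ on $\Lirr$ defined by ``$x \vee y$ does not exist'' satisfies (IC1) and (IC2). Axiom (IC1) is immediate from the definition; for (IC2) one uses that in a $\wedge$-semilattice a common upper bound of $p'$ and $q'$ would be a common upper bound of $p$ and $q$. For (2), the main point is that $\CIde{P}$, ordered by inclusion, is a $\wedge$-semilattice (intersection of consistent ideals is a consistent ideal) and that it is in fact a \emph{median} semilattice; this is where the median axioms must be checked — that each principal ideal $I_X = \{Y \in \CIde{P} : Y \subseteq X\}$ is a distributive lattice (it is the lattice of consistent ideals contained in the fixed consistent ideal $X$, and one verifies distributivity of the join $Y \vee Y'$, which is the smallest consistent ideal containing $Y \cup Y'$), and that the triplewise-join condition holds.

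The heart of the theorem is the pair of isomorphisms. For (1), I would define $\varphi : L \to \CIde{\PIP{L}}$ by $\varphi(a) = \{\, l \in \Lirr : l \leq a \,\}$, the set of $\vee$-irreducibles below $a$. One checks $\varphi(a)$ is an ideal of $\Lirr$ trivially; it is consistent because any two irreducibles below $a$ have the common upper bound $a$, so by (IC1) they are not inconsistent. Monotonicity and injectivity of $\varphi$ rest on the fact that in a semilattice of finite rank every element is the join of the $\vee$-irreducibles below it — so $a = \bigvee \varphi(a)$, which recovers $a$ from $\varphi(a)$ and gives injectivity. The work is surjectivity: given a consistent ideal $X \subseteq \Lirr$, I must produce $a \in L$ with $\varphi(a) = X$; the natural candidate is $a = \bigvee X$, and I must show (i) this join exists and (ii) no $\vee$-irreducible outside $X$ slips in below $\bigvee X$. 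For (i) I would argue by induction on $|X|$ using the median (triplewise-join) axiom together with consistency of $X$ to guarantee that partial joins keep existing. For (ii) — that $\varphi(\bigvee X) = X$ rather than something larger — I would use distributivity inside the principal ideal $I_{\bigvee X}$: if $l$ is $\vee$-irreducible and $l \leq \bigvee X = \bigvee_{x \in X} x$, then distributivity gives $l = \bigvee_{x} (l \wedge x)$, and $\vee$-irreducibility of $l$ forces $l = l \wedge x \leq x$ for some $x \in X$, whence $l \in X$ because $X$ is an ideal. Finally $\varphi$ is a semilattice homomorphism: $\varphi(a \wedge b) = \varphi(a) \cap \varphi(b)$ follows from the same distributive-law argument applied to $l \leq a \wedge b$.

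For (2), I would show that $P \mapsto \PIP{\CIde{P}}$ recovers $P$ by exhibiting the principal ideals $I_p = \{q \in P : q \leq p\}$ as exactly the $\vee$-irreducible elements of $\CIde{P}$ and checking that $p \mapsto I_p$ is an isomorphism of PIPs. That each $I_p$ is $\vee$-irreducible in $\CIde{P}$: if $I_p = Y \vee Y'$ then $p$ itself lies in $Y \cup Y'$ (the join is generated by the union, and $p$, being the top of $I_p$, cannot be obtained from strictly smaller elements), so $I_p \subseteq Y$ or $I_p \subseteq Y'$. Conversely every $\vee$-irreducible consistent ideal has a unique maximal element $p$ and hence equals $I_p$ — here I use that a consistent ideal is the join of the principal ideals of its maximal elements, and irreducibility collapses this to one term; the finite-rank hypothesis guarantees maximal elements exist. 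Order is clearly preserved ($I_p \subseteq I_q \iff p \leq q$), and the inconsistency relations match: $I_p$ and $I_q$ are inconsistent as irreducibles of $\CIde{P}$ iff $I_p \cup I_q$ contains an inconsistent pair iff (by (IC2), pushing the inconsistency up to $p,q$) $p \smile q$.

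I expect the main obstacle to be surjectivity of $\varphi$ in part (1) — specifically showing that $\bigvee X$ exists for an arbitrary consistent ideal $X$. The triplewise-join axiom of a modular/median semilattice only directly gives ternary joins from pairwise ones, so the induction that lifts this to arbitrary finite consistent sets needs care: one must verify that after forming a partial join $a_k = \bigvee\{x_1,\dots,x_k\}$, the pair $a_k, x_{k+1}$ still has an upper bound, which is where consistency of $X$ (no inconsistent pairs) must be combined with the geometry of the principal ideal containing $a_k$. Once ternary-join existence is bootstrapped to finite-join existence for consistent sets, the rest of the argument is the distributive-law bookkeeping sketched above.
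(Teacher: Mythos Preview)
The paper does not prove this theorem; it is cited as a known result of Barth\'elemy and Constantin and stated without proof as background for the PPIP representation theorem. So there is no paper proof to compare against directly.

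Your plan is nonetheless correct and, in outline, parallels the proof the paper \emph{does} give for its generalization to modular semilattices (Section~\ref{sec:3.4}). The maps $\varphi$ and $\psi$ you define are exactly those of Proposition~\ref{prop:SemiModularRepMain}; your identification of the $\vee$-irreducibles of $\CIde{P}$ with principal ideals is Lemma~\ref{lem:joinIrrInPPIP} specialized to PIPs; and your check that inconsistency is preserved by $p\mapsto I_p$ matches the argument there. Two places where the argument is easier than you expect: for the existence of $\bigvee X$ when $X$ is finite and consistent, the induction is clean --- with $a=x_1\vee\cdots\vee x_{n-2}$, $b=x_{n-1}$, $c=x_n$, the joins $a\vee b$ and $a\vee c$ exist by induction on $n$ and $b\vee c$ by consistency, so the triplewise-join axiom gives $a\vee b\vee c=\bigvee X$ directly. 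And for distributivity of a principal ideal $I_X$ in $\CIde{P}$, note that every ideal contained in the consistent set $X$ is automatically consistent, so $I_X$ is simply the lattice of order-ideals of the poset $X$, which is distributive with join equal to set union; this also makes the triplewise-join condition for $\CIde{P}$ immediate.
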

\subsection{Modular lattice and projective ordered space}
\label{sec:3.2}
We next introduce projective ordered spaces and explain a Birkhoff-type representation theorem for modular lattices.
As in the case of median semilattice, a key tool for the compact representation is a poset endowed with an additional relation.
The axiomatization of projective ordered spaces is necessary to establish our Birkhoff-type representation theorem for modular lattices.

Let $P$ be a poset.
A symmetric ternary relation $\collSym$ defined on $P$ is called a \textit{collinearity relation} \cite{herrmann1994a} if the following conditions are satisfied:
\begin{description}
\item[(CT1)] if $C(p,q,r)$ holds, then $p$, $q$, and $r$ are pairwise incomparable; 
\item[(CT2)] if $\coll{p}{q}{r}$ holds, $p \leq w$, and $q \leq w$, then $r \leq w$.
\end{description}
An \textit{ordered space} is a poset endowed with a collinearity relation.
A triple of elements $x, y, z \in P$ is \textit{collinear} if $C(x,y,z)$ holds.

Any semilattice $L$ induces a collinearity relation on $L^{\mathrm{ir}}$.
Define a ternary relation $C$ on \LIrr by:
$C(x,y,z)$ holds if and only if $x,y,z$ are pairwise incomparable, $x \vee y$, $y \vee z$, $z \vee x$ exist, and $x \vee y = y \vee z = z \vee x$.
Then $C$ is indeed a collinearity relation \cite{herrmann1994a}, and \LIrr becomes an ordered space.
To recover $L$  from $\Lirr$, we introduce the notion of \textit{subspaces} in an
ordered space.
Let $P$ be an ordered space.
An ideal $X \subseteq P$ is called a \textit{subspace} if $p,q \in X$ and the collinearity of $p,q,r$ implies $r \in X$.
Let $\Subsp{P}$ denote the family of subspaces in $P$.

For a modular lattice $L$, let $\POS{L}$ denote the induced ordered space on $\Lirr$.
Then $L$ is isomorphic to $\Subsp{\POS{L}}$ [6]. In particular, any modular
lattice is represented by an ordered space.
However, not all ordered spaces represent modular lattices.
To avoid this inconvenience, Herrmann, Pickering, and Roddy \cite{herrmann1994a} axiomatized projective ordered spaces.

\begin{defn}
\label{defn:POS}
An ordered space $P$ is said to be \textit{projective} if the following axioms are satisfied:
\begin{description}
  \item[(Regularity)] For any collinear triple $(p, q, r)$ and $r' \in P$ such that $r' \leq r$, $r' \not \leq p$, and $r' \not \leq q$, there exist $p' \leq p$ and $q' \leq q$ such that $C(p', q', r')$ holds.
  \item[(Triangle)] If $\coll{a}{c}{p}$ and $\coll{b}{c}{q}$ are satisfied, then at least one of the following conditions holds:
    \begin{itemize}
      \item There exists $x \in P$ such that $C(a,b,x)$ and $C(p,q,x)$ hold, $\{a,b,c,p,q,x\}$ are pairwise incomparable, and there are no collinear triples in $\{a,b,c,p,q,x\}$ other than $(a,c,p)$, $(b,c,q)$, $(a,b,x)$, $(p,q,x)$, and their permutations;
      \item There is $a' \leq a$ such that $\coll{b}{q}{a'}$ holds;
      \item $C(b,q,p)$ holds;
      \item There are $a' \leq a$ and $p' \leq p$ such that $\coll{q}{a'}{p'}$ holds;
      \item $q \leq a$ or $q \leq p$.
    \end{itemize}
\end{description}
\end{defn}

The following theorem establishes a Birkhoff-type representation theorem for modular lattices.
\begin{thm}[\cite{herrmann1994a}]
\label{theo:modular}
\begin{itemize}
\item[{\rm (1)}] Let $L$ be a modular lattice.
Then $\POS{L}$ is a projective ordered space,
and $\CIde{\POS{L}}$ is isomorphic to $L$, where
an isomorphism $\phi \colon L \rightarrow \Subsp{\POS{L}}$ is given by 
\[ \phi(l) := \{p \in \PPIP{L} \mid p \leq l \}. \]
The inverse $\psi$ is given by $\psi(I) := \bigvee_{x \in I} x$
with $\psi(\emptyset) = \min L$.
\item[{\rm (2)}] Let $P$ be a projective ordered space.
Then $\Subsp{P}$ is a modular lattice,
and $\POS{\Subsp{P}}$ is isomorphic to $P$.
\end{itemize}
\end{thm}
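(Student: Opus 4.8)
The plan is to prove Theorem~\ref{theo:modular} by establishing that the two maps $\phi$ and $\psi$ are mutually inverse order-isomorphisms, and then deducing the axioms of a projective ordered space for $\POS{L}$ from the lattice identities of a modular lattice $L$. Since the excerpt has already recalled that $L \cong \Subsp{\POS{L}}$ via $\phi$ (citing \cite{herrmann1994a}), the core of part~(1) is to verify that this particular $\phi$ has inverse $\psi$ and that $\POS{L}$ actually satisfies (Regularity) and (Triangle), not merely (CT1)--(CT2). For part~(2), the task is to go the other way: starting from an abstract projective ordered space $P$, show $\Subsp{P}$ is a modular lattice and that recomputing the induced ordered space on its $\vee$-irreducibles returns $P$.

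First I would treat part~(1). The set $\Subsp{\POS{L}}$ is a lattice ordered by inclusion, with meet given by intersection and join given by the smallest subspace containing the union; one checks $\phi(l)$ is indeed an ideal (clear) and a subspace (if $p,q \leq l$ and $C(p,q,r)$ holds, then by (CT2) with $w = l$ we get $r \leq l$). That $\phi$ is order-preserving is immediate. For surjectivity and for $\psi \circ \phi = \mathrm{id}$, the key fact is that in a lattice of finite rank every element is the join of the \joinIrr elements below it, so $\psi(\phi(l)) = \bigvee\{p \in \Lirr : p \leq l\} = l$; for $\phi \circ \psi = \mathrm{id}$ one must show that for a subspace $I$, the \joinIrr elements below $\bigvee_{x\in I} x$ are exactly the elements of $I$ --- the nontrivial inclusion uses modularity to decompose a \joinIrr element $p \le x_1 \vee \cdots \vee x_k$ (with $x_i\in I$) via the collinearity/perspectivity structure so that $p$ lies in $I$ by repeated application of the subspace-closure condition. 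Then, with $\phi$ established as an isomorphism, the axioms (Regularity) and (Triangle) are translated: each is a statement about perspectivities and the arrangement of \joinIrr elements in a sublattice generated by two or three "lines," and each follows from a finite computation inside the free modular lattice on a bounded number of generators (Dedekind's characterization, and the fact that the free modular lattice on three generators is finite). This translation --- finding the right sublattice and reading off which collinearities can occur --- is where the bulk of the work lies.

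For part~(2), given a projective ordered space $P$, I would first show $\Subsp{P}$ is closed under arbitrary intersection (trivial) so it is a complete lattice, identify the \joinIrr elements of $\Subsp{P}$ with the principal subspaces $\langle p\rangle$ (the smallest subspace containing $p\in P$), and then verify modularity of $\Subsp{P}$: given subspaces $A \subseteq C$ one must show $A \vee (B \wedge C) = (A \vee B) \wedge C$, and the $\supseteq$-direction is the content, proved by taking $r$ in the right side, writing it via collinearity closure from elements of $A$ and $B$, and using (Regularity) and (Triangle) to push the "$B$-part" down into $B \wedge C$. Finally, to see $\POS{\Subsp{P}}$ is isomorphic to $P$, map $p \mapsto \langle p\rangle$; this is an order-isomorphism onto $\Lirr$ of $\Subsp{P}$, and one checks it carries the original collinearity relation of $P$ to the induced one on $(\Subsp{P})^{\mathrm{ir}}$, i.e.\ $C(p,q,r)$ in $P$ iff $\langle p\rangle,\langle q\rangle,\langle r\rangle$ are pairwise incomparable with $\langle p\rangle\vee\langle q\rangle = \langle q\rangle\vee\langle r\rangle = \langle r\rangle\vee\langle p\rangle$ --- the forward direction is (CT1)--(CT2) plus the definition of join, and the converse again invokes (Regularity).

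The main obstacle I expect is the modularity/perspectivity bookkeeping in both directions: in part~(1), verifying that (Regularity) and (Triangle) are \emph{forced} by modularity (as opposed to merely consistent with it) requires carefully enumerating the possible configurations of \joinIrr elements inside the sublattice generated by two collinear triples sharing a point --- this is essentially a case analysis mirroring the five clauses of (Triangle), and getting the "no other collinear triples" clause of the first case right is delicate; and in part~(2), the $\supseteq$ inclusion in the modular law for $\Subsp{P}$ is the step where the full strength of the projectivity axioms must be deployed rather than just (CT1)--(CT2). Since this theorem is quoted from \cite{herrmann1994a}, in the paper I would state it and refer to that reference for the detailed verification rather than reproducing the case analysis here.
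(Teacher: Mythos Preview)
Your proposal is correct and ends in exactly the same place as the paper: Theorem~\ref{theo:modular} is stated as a quoted result from \cite{herrmann1994a} with no proof given, and your final sentence (``state it and refer to that reference'') matches precisely what the paper does. The sketch you outline of how the argument would go is reasonable and broadly faithful to the Herrmann--Pickering--Roddy approach, but none of it appears in the present paper, which simply cites the result and moves on.
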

\subsection{Modular semilattice and PPIP}
\label{sec:3.3}
Modular semilattices are a common generalization of median
semilattices and modular lattices.
Therefore one may expect that modular semilattices are represented
by a structure generalizing PIP and projective ordered space.
Here we introduce such a structure, named a PPIP, and
establish a Birkhoff-type representation theorem for modular semilattices.

\begin{defn}
Let $P$ be a poset associated with an inconsistency relation $\smile$ and collinearity relation $\collSym$. We say that $P$ is a \textit{PPIP} if the following axioms are satisfied:
\begin{description}
  \item[(Regularity)] The same as in Definition \ref{defn:POS}.
  \item[(weak Triangle)] Suppose that $\coll{a}{c}{p}$ and $\coll{b}{c}{q}$ hold and $\{a,b,c,p,q\}$ is consistent.
    Then at least one of the five conditions of Triangle axiom in Definition \ref{defn:POS} holds.
  \item[(Consistent-Collinearity)] For any collinear triple $(p,q,r)$, the following conditions are satisfied:
  \begin{description}
    \item[(CC1)] the set $\{p,q,r \}$ is consistent;
    \item[(CC2)] for any $x \in P$, the element $x$ is consistent with either at most one of $(p,q,r)$ or all of them.
  \end{description}
\end{description}
\end{defn}

For a modular semilattice $L$, let $\PPIP{L}$ denote \LIrr{} equipped with the induced inconsistency relation and collinearity relation. 
We will later prove that $\PPIP{L}$ is a PPIP if $L$ is a modular semilattice. 
For a PPIP $P$, let $\Csub{P}$ be the family of consistent subspaces of PPIP $P$.
Regard $\Csub{P}$ as a poset with respect to the inclusion order $\subseteq$.

The main result in this section is the following:
\begin{thm}
\label{theo:BirkhoffSemiModular}
  \begin{description}
    \item[{\rm (1)}]Let $L$ be a modular semilattice. 
      Then $\PPIP{L}$ is a PPIP,
      and $\Csub{\PPIP{L}}$ is isomorphic to $L$, where 
      an isomorphism $\phi \colon L \rightarrow \Csub{\PPIP{L}}$ is given by 
\[\phi(l) := \{p \in \PPIP{L} \mid p \leq l \}.\] 
     The inverse $\psi$ is given by $\psi(I) := \bigvee_{x \in I} x$ with  
$\psi(\emptyset) = \min L$.
    \item[{\rm (2)}] Let $P$ be a PPIP.
      Then $\Csub{P}$ is a modular semilattice,
      and $\PPIP{\Csub{P}}$ is isomorphic to $P$. 
  \end{description}
\end{thm}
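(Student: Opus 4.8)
The plan is to prove the two directions in the order that makes the second reusable in the first, following the template already established for PIPs and for projective ordered spaces, and reducing to those two cases wherever a principal ideal is at stake. For part (1), let $L$ be a modular semilattice. First I would verify that $\PPIP{L}$ is a PPIP: the Regularity axiom and the weak Triangle axiom both concern collinear triples all of whose members have a common upper bound, so the relevant joins exist and the configuration lives inside a single principal ideal $I_w$, which is a \emph{modular lattice}; hence these axioms follow from the fact (Theorem \ref{theo:modular}) that $\POS{I_w}$ is a projective ordered space, after checking that collinearity and the order in $\Lirr \cap I_w$ agree with those in $(I_w)^{\mathrm{ir}}$. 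The Consistent-Collinearity axioms (CC1), (CC2) must be checked directly from the definition of the induced relations: (CC1) is immediate since if $x\vee y=y\vee z=z\vee x$ exists then $\{x,y,z\}$ has a common upper bound; (CC2) is where the modular-semilattice hypothesis ``$x\vee y,\,y\vee z,\,z\vee x$ exist $\Rightarrow x\vee y\vee z$ exists'' does real work — if $x$ is consistent with two of $p,q,r$, say $x\vee p$ and $x\vee q$ exist, then since $p\vee q$ exists we get $x\vee p\vee q$ and in particular $x\vee r$ exists, so $x$ is consistent with all three.

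Next, for the isomorphism $\phi$ in part (1): I would show $\phi(l)$ is always a consistent subspace (consistency because all elements lie below $l$; subspace-closure because if $p,q\le l$ and $C(p,q,r)$ then by (CT2) $r\le l$; ideal because principal ideals are ideals), that $\phi$ is order-preserving and order-reflecting, and that $\psi\circ\phi=\mathrm{id}$, i.e.\ $\bigvee_{p\le l,\ p\in\Lirr}p=l$. The last is the standard fact that every element of a finite-rank semilattice is the join of the \joinIrr elements below it — valid here because each $I_l$ is a lattice and the usual Birkhoff argument applies inside $I_l$. The surjectivity/well-definedness of $\psi$ is the crux: given a consistent subspace $I$, I must show $\bigvee_{x\in I}x$ exists. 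Here I would argue by building the join incrementally along a linear extension of $I$ and using, at each step, the modular-semilattice join-existence axiom together with the subspace-closure of $I$ and axiom (CC2) to guarantee pairwise joins remain available — this is the technical heart and the step I expect to fight with; one likely needs an induction on $|I|$ showing simultaneously that $\psi(I)$ exists and that $\phi(\psi(I))=I$, the latter using Regularity to recover every element of $I$ as lying below the join. Then $\phi$ and $\psi$ are mutually inverse order-isomorphisms.

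For part (2), let $P$ be a PPIP. I would first observe that $\Csub{P}$ is closed under intersection (clear: intersections of ideals, of consistent sets, and of subspace-closed sets are again such), hence is a $\wedge$-semilattice with $X\wedge Y=X\cap Y$, and that $X\vee Y$ exists in $\Csub{P}$ exactly when $X\cup Y$ is consistent — in which case $X\vee Y$ is the subspace-closure of $X\cup Y$, which one shows is still consistent using (CC1), (CC2) and Regularity to control which new collinear completions get added. To prove $\Csub{P}$ is a \emph{modular} semilattice I must check: (i) each principal ideal $\{Y\in\Csub{P}\mid Y\subseteq X\}$ is a modular lattice — but this principal ideal is precisely $\Subsp{P|_X}$ for the ordered space $P$ restricted to the consistent set $X$, on which the weak Triangle axiom becomes the full Triangle axiom (since everything in $X$ is mutually consistent) so $P|_X$ is a projective ordered space and Theorem \ref{theo:modular}(2) gives modularity; and (ii) the three-fold join condition, which I would deduce from (CC2): if $X\vee Y$, $Y\vee Z$, $Z\vee X$ all exist then $X\cup Y\cup Z$ is consistent (an inconsistent pair would have its two members in, say, $X$ and $Y$, contradicting consistency of $X\cup Y$), so its closure is a consistent subspace serving as $X\vee Y\vee Z$. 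Finally $\PPIP{\Csub{P}}\cong P$: send $p\in P$ to the principal ideal $I_p$, which is \joinIrr in $\Csub{P}$, and check this is a bijection onto $(\Csub{P})^{\mathrm{ir}}$ preserving order, $\smile$, and $C$ — the order and $\smile$ parts are routine, and the collinearity part, $C(I_p,I_q,I_r)\iff C(p,q,r)$ in $P$, is where Regularity is used to show a $\vee$-irreducible consistent subspace is exactly a principal ideal and that three principal ideals with equal pairwise joins force collinearity in $P$.

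The main obstacle, as flagged above, is the well-definedness of $\psi$ in part (1): proving that an arbitrary consistent subspace has a join in $L$, which requires carefully combining subspace-closure, consistency, and the modular-semilattice join axiom — the analogous step is trivial for PIPs (consistent ideals trivially have joins once pairwise joins do, by the cube condition implicit in median semilattices) and is handled by the projective-space machinery for modular lattices, but here the two mechanisms must be merged, and axiom (CC2) is precisely the bridge that makes the merge go through.
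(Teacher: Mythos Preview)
Your overall architecture matches the paper's closely: reduce Regularity and weak Triangle to the modular-lattice case inside a principal ideal, verify (CC1)--(CC2) directly, show principal ideals of $\Csub{P}$ are modular via Theorem~\ref{theo:modular}, and realise $\PPIP{\Csub{P}}\cong P$ via $p\mapsto I_p$. However, you have misjudged where the difficulty lies. The step you flag as the ``technical heart'' --- that $\bigvee_{x\in I}x$ exists for a consistent subspace $I\subseteq\Lirr$ --- is in fact immediate: consistency of $I$ means precisely that all pairwise joins exist in $L$, and then a straightforward induction using only the three-fold join axiom of a modular semilattice gives the full join; neither the subspace property nor (CC2) is needed here, and the paper disposes of it in a single clause. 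The place where (CC2) genuinely earns its keep is in part~(2), and the paper isolates it as a separate lemma: \emph{every consistent subset of a PPIP is contained in some consistent subspace}, proved by iterating collinear-closure and downward-closure and checking consistency is preserved at each stage via (CC2) and (IC2). This lemma then yields existence of joins in $\Csub{P}$ and the three-fold join condition. Finally, for $\phi\circ\psi=\mathrm{id}$ in part~(1) the paper does not argue via Regularity as you suggest, but simply observes that once $\psi(I)$ exists, the whole question lives inside the modular lattice $I_{\psi(I)}$, where it is exactly the content of Theorem~\ref{theo:modular}; your proposed simultaneous induction would work but is unnecessary.
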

In particular, a modular semilattice is compactly represented by a PPIP.
This theorem will be proved in the next section.
\begin{ex}
A modular semilattice, illustrated in Figure \ref{fig:PPIPrep} (a), is represented by the PPIP in Figure \ref{fig:PPIPrep} (b).
\end{ex}

\begin{figure}[t]
  \begin{center}
    \begin{tabular}{c}

      \begin{minipage}{0.5\hsize}
        \begin{center}
          \includegraphics[clip, width=6.5cm]{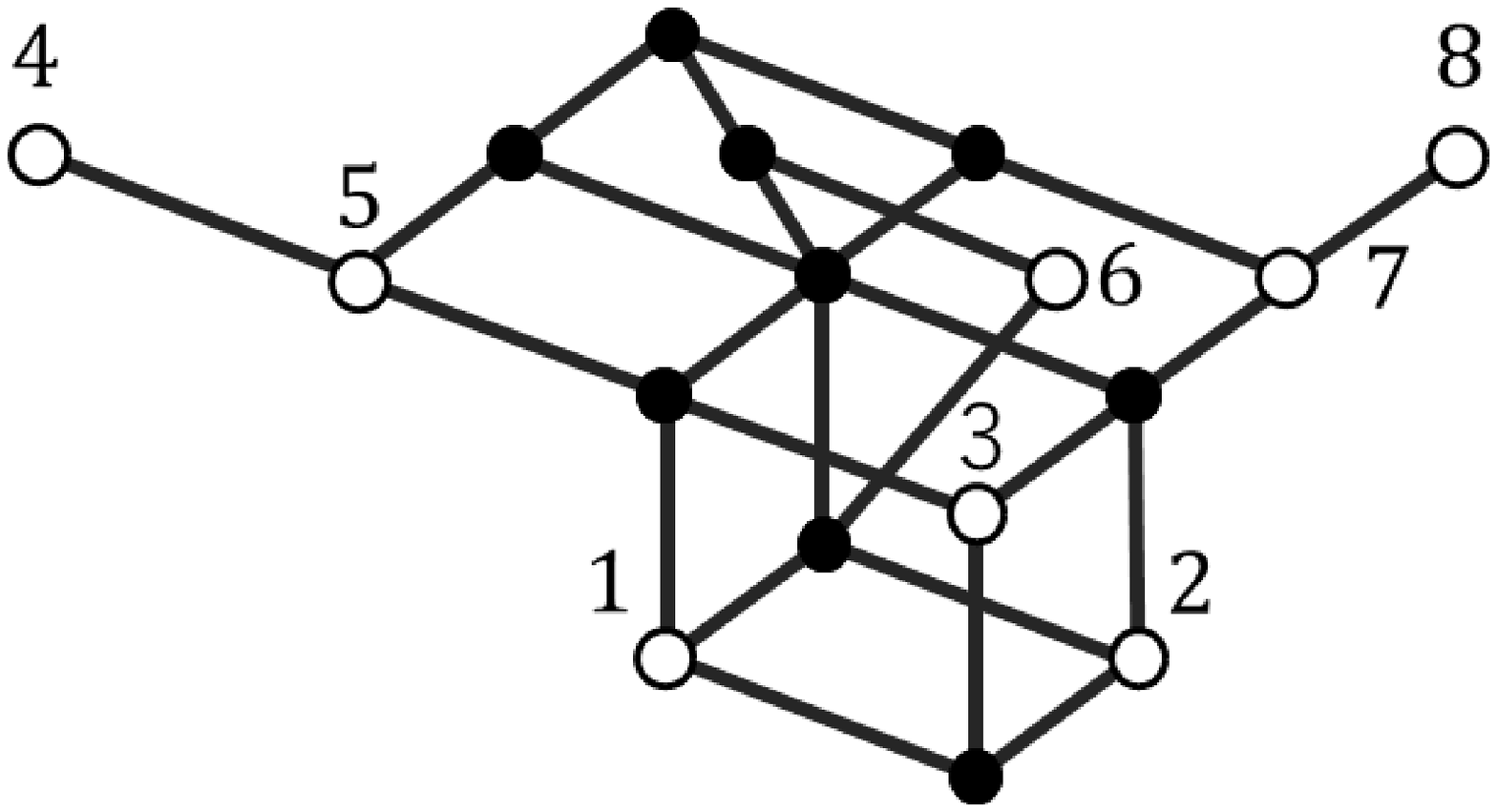}
          \hspace{1.6cm} (a)
        \end{center}
      \end{minipage}

      \begin{minipage}{0.5\hsize}
        \begin{center}
          \includegraphics[clip, width=6.5cm]{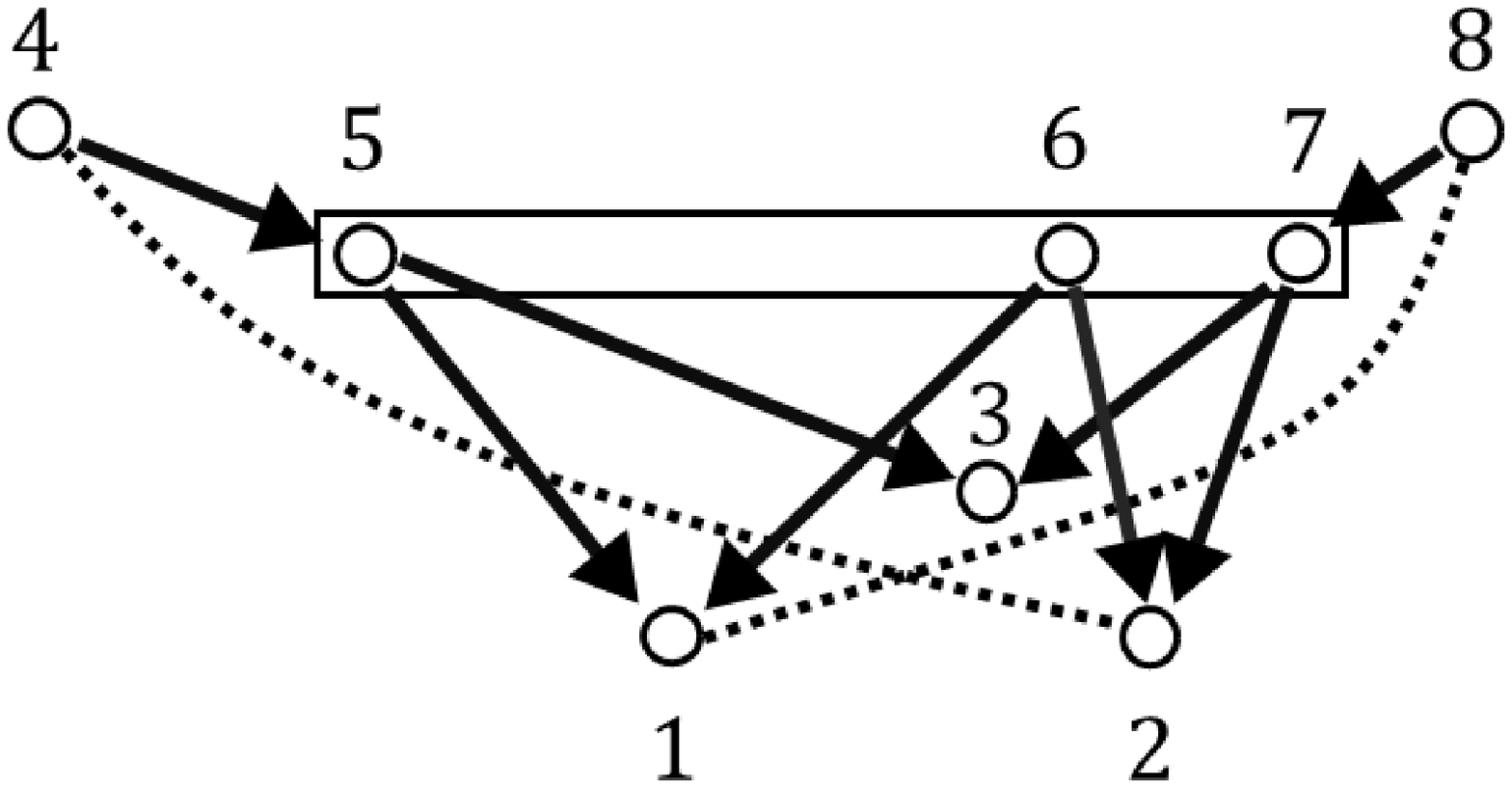}
          \hspace{1.6cm} (b)
        \end{center}
      \end{minipage}

    \end{tabular}
    \caption{An example of PPIP representation. (a) Hasse diagram of a modular semilattice. Its \joinIrr elements are represented by white dots and numbered. (b) PPIP representation of the modular semilattice. Dots and arrows constitute its Hasse diagram. Dotted line represents minimal inconsistent pairs (defined in Section \ref{sec:4.1}). Three elements in the rectangular box are collinear.  }
    \label{fig:PPIPrep}
  \end{center}
\end{figure}
\begin{ex}
A PPIP can be viewed as a generalization of a \textit{polar space}~\cite{ueberberg2011foundation}.
A \textit{point-line geometry} is a pair $(P,L)$ of a set $P$ and $L \subseteq \power{P}$,
 where an element in $P$ is called a \textit{point} and an element in $L$ is called a \textit{line}. 
We say that a line $l$ connects $p$ and $q$ and that $p$ and $q$ are on $l$ if $p, q \in l$.
Two points $p$ and $q$ are said to be \textit{collinear} if there is a line $l$ connecting $p$ and $q$.
The point-line geometry $(P,L)$ is called a \textit{polar space} if the following conditions are satisfied:
\begin{itemize}
  \item[(i)] for any points $p,q$, there is at most one line connecting $p$ and $q$;
  \item[(ii)] for any line $l$, there are at least 3 points on $l$;
  \item[(iii)] for any line $l$ and a point $p$, there exist either exactly one point on $l$ collinear with $p$, or all points on $l$ are collinear with $p$.
\end{itemize}

Any polar space $(P,L)$ is a PPIP.
Indeed, we regard $P$ as a poset each pair of whose elements is incomparable.
We define an inconsistency relation on $P$ by $p \smile q$ if and only if $p$ is not collinear with $q$, and a collinearity relation by $C(p,q,r)$ holds if and only if $p$, $q$, and $r$ are on a common line.
Then it is clear that $P$ satisfies Consistent-Collinearity axiom.
That $P$ satisfies Regularity and weak Triangle axioms follows from the fact that every \textit{subspace} of a polar space is a \textit{projective space} \cite{ueberberg2011foundation}.

A canonical example of polar spaces is the family of \textit{totally isotropic} subspaces in vector space $V$ with nondegenerate alternating bilinear form $B$.
A subspace $W \subseteq V$ is said to be \textit{totally isotropic} if $B(W, W) = \{0\}$. 
A polar space in Figure \ref{fig:polar} corresponds to the case where $V = \mathrm{GF}(2)^3$ and $B$ is identified with a matrix 
\[
B =  \left(
    \begin{array}{ccc}
      0 &  1 & 1   \\
      1 & 0 &  1 \\
      1 & 1 & 0\\
    \end{array}
  \right).
\]
Each point corresponds to a subspace spanned by each of vectors
\[ (1,1,1), (0,0,1), (1,1,0), (0,1,0), (1,0,1), (1,0,0), (0,1,1),\]
in the numerical order.
\end{ex}
\begin{figure}[t]
  \centering
  \includegraphics[clip,width=4.5cm]{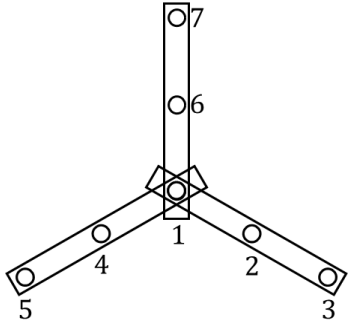}
  \caption{A polar space of totally isotropic spaces in $\mathrm{GF}(2)^3$.}
  \label{fig:polar}
\end{figure}
\subsection{Proof of Theorem \ref{theo:BirkhoffSemiModular}}
\label{sec:3.4}
\begin{lem}
\label{lem:inductiveConstruction}
Let $P$ be a PPIP and $X$ its consistent subset.
Then there exists a consistent subspace $S \in \Csub{P}$ such that $X \subseteq S$.
\end{lem}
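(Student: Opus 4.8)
The plan is to build the desired consistent subspace $S$ by a greedy/inductive closure process: start from $X$ and repeatedly add, for every pair $p,q$ already in the set and every $r$ with $C(p,q,r)$, the element $r$; simultaneously close downward so that the set remains an ideal. One must show that this process stays consistent, which is where the PPIP axioms — specifically Consistent-Collinearity (CC1) and (CC2) — will do the work. Since $P$ has finite rank, the process terminates and the result is by construction an ideal closed under collinearity, i.e. a subspace; the remaining content is that it is consistent.

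The key steps, in order. First, I would fix a careful formulation of one closure step: given a consistent ideal $Y \supseteq X$ that is not yet a subspace, pick $p,q \in Y$ and $r \notin Y$ with $C(p,q,r)$, and set $Y' := Y \cup I_r$ (adding $r$ together with its principal ideal to keep an ideal). Second, I would prove the crucial invariant: $Y'$ is still consistent. Suppose not; then some $x \in Y'$ is inconsistent with some $y \in Y'$. Using (IC2) and the fact that $Y$ was consistent, one reduces to the case where the offending inconsistency involves $r$ (or an element $\le r$, which by (IC2) forces $r$ itself to be inconsistent with the other element after moving up): so $r \smile y$ for some $y \in Y$. Now apply (CC2) to the collinear triple $(p,q,r)$ and the element $y$: since $y$ is inconsistent with $r$, it is inconsistent with at most one of $p,q,r$, hence $y$ is consistent with $p$ and with $q$ — but that does not immediately contradict $y \in Y$. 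The right move is to look at it the other way: $y$ is consistent with both $p$ and $q$, and also with every other member of $Y$; the obstruction is only between $y$ and $r$. Here I expect to need a sharper use of the axioms, perhaps invoking (CC1) on $(p,q,r)$ together with Regularity, or an auxiliary lemma that a consistent ideal plus one collinearity-forced point stays consistent unless $P$ itself violates an axiom. Third, once the one-step invariant is established, I would iterate: finite rank bounds the number of elements ever added (each added $r$ is pairwise incomparable with $p,q$ by (CT1), so it sits at a bounded height), the sequence $X = Y_0 \subsetneq Y_1 \subsetneq \cdots$ stabilizes at some $S$, and $S$ is then a consistent subspace containing $X$.

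The main obstacle is exactly the consistency-preservation in the closure step: a priori, adding a collinearity-forced element $r$ could create a new inconsistent pair with some element $y$ deep inside $Y$, and (CC2) only controls the interaction of $y$ with the single triple $(p,q,r)$, not with the whole accumulated set. I anticipate that the clean way around this is to add all collinearity-forced elements "in parallel" at each round rather than one at a time, and then to argue that any inconsistency $x \smile y$ in the new set, after pushing both endpoints up via (IC2) to suitable elements of the previous set or to freshly added collinear completions, is witnessed at the level of a single collinear triple and a single external element, at which point (CC2) applies. An alternative, possibly cleaner, route is to define $S$ directly as the smallest subspace containing $X$ (intersection of all subspaces containing $X$ — note $P$ itself is a subspace, though possibly inconsistent) and then prove consistency of this minimal subspace by a rank induction, using Regularity to "push down" any hypothetical inconsistent pair to lower rank. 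I would try the parallel-closure argument first and fall back on the minimal-subspace formulation if the bookkeeping becomes unwieldy.
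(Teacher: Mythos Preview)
Your parallel-closure plan is exactly what the paper does: alternate between (i) adding all collinearity-forced points and (ii) closing downward, proving by induction that each stage stays consistent, with (CC2) handling step~(i) and (IC2) handling step~(ii). The paper does not argue termination---it simply sets $S := \bigcup_n S^{(n)}$, which is automatically a subspace containing $X$; your claim that finite rank forces termination is not quite right when $P$ is infinite (only finite \emph{rank} is assumed in this section), but taking the union sidesteps the issue.

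One correction: you have misread (CC2), and that misreading is precisely what makes you believe there is an obstacle in the one-step argument. The axiom says that any element $x$ is consistent with \emph{either at most one} of a collinear triple $(p,q,r)$ \emph{or with all three}. In your situation $y\in Y$ and $p,q\in Y$, so $y$ is already consistent with both $p$ and $q$; hence $y$ is consistent with at least two of the triple, hence with all three, hence with $r$---an immediate contradiction to $y\smile r$. (You wrote ``inconsistent with at most one'' where the axiom gives ``consistent with at most one'', which flipped the conclusion.) So the invariant holds at once and your ``main obstacle'' evaporates. The only extra wrinkle in the parallel version is when both $x$ and $y$ are newly added in the same collinearity round; the paper handles this by applying (CC2) twice, first reducing $x$-versus-$y$ to $x$-versus-old and then to old-versus-old---exactly what your sketch anticipates.
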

\begin{proof}
We construct $S$ inductively:
\begin{align*}
&S^{(0)} := X,\\
&S^{(2n+1)} := S^{(2n)} \cup f\left(S^{(2n)}\right), \\
&S^{(2n + 2)} := S^{(2n+1)} \cup g\left(S^{(2n+1)}\right),\\
&S := \bigcup_n S^{(n)},
\end{align*}
where $f(W) := \{p \in P \mid \exists p', p'' \in W \;\; C(p,p',p'') \text{ holds} \} $ and $g(W) := \{p \in P \mid \exists p' \in W \;\; p \leq p' \}$.
It is easy to see that $S$ is a subspace and $X \subseteq S$.

Hence it suffices to prove that $S$ is consistent.
By assumption, $S^{(0)}$ is consistent.
Suppose by induction that $S^{(2n)}$ is consistent. 
We show that $S^{(2n+1)}$ is consistent.
Let $x, y \in S^{(2n+1)}$.
Then one of the following cases holds:
(i) $x,y \in S^{(2n)}$; 
(ii) $x \in S^{(2n+1)} \setminus S^{(2n)}$ and $y \in S^{(2n)}$;
(iii) $x, y \in S^{(2n+1)} \setminus S^{(2n)}$.
In case (i), $x \not \smile y$ by induction.
In case (ii), there are $x_1, x_2 \in S^{(2n)}$ such that $C(x, x_1, x_2)$ holds. 
Since $y \in S^{(2n)}$ is consistent with both $x_1$ and $x_2$ by induction, (CC2) implies $x \not \smile y$.
In case (iii),
there are $y_1, y_2 \in S^{(2n)}$ such that $C(y,y_1,y_2)$ holds.
Then $x$ is consistent with both $y_1$ and $y_2$ by the case (ii).
By using (CC2) again, we see that $x \not \smile y$. 
Thus $S^{(2n+1)}$ is consistent.
Next we show that $S^{(2n+2)}$ is also consistent. 
Suppose to the contrary that there is an inconsistent pair $x \smile y \in S^{(2n+2)}$.
By the definition of $S^{(2n+2)}$, there exist $x', y' \in S^{(2n+1)}$ such that $x \leq x'$ and $y \leq y'$. 
Then (IC2) implies $x' \smile y'$.
This contradicts the fact that $S^{(2n+1)}$ is consistent.
Hence $S^{(2n+2)}$ is consistent.
\end{proof}
\begin{prop}
\label{prop:PPIPtoModularSemi}
Let $P$ be a PPIP. 
Then $\Csub{P}$ is a modular semilattice.
\end{prop}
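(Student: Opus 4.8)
The plan is to verify, for the inclusion-ordered family $\Csub{P}$, the three conditions in the definition of a modular semilattice: that it is a $\wedge$-semilattice, that $S\vee T\vee U$ exists whenever $S\vee T$, $T\vee U$, and $U\vee S$ do, and that every principal ideal of it is a modular lattice. The first two are quick. The intersection of two consistent subspaces is again an ideal, again closed under collinearity (if $p,q\in S\cap T$ and $\coll{p}{q}{r}$, then $r\in S$ and $r\in T$), and again consistent; hence $S\cap T=S\wedge T$ and $\Csub{P}$ is a $\wedge$-semilattice, with bottom element $\emptyset$. A common upper bound of $S$ and $T$ is precisely a consistent subspace containing $S\cup T$; such a subspace exists if and only if $S\cup T$ is consistent (necessity is trivial, sufficiency is Lemma~\ref{lem:inductiveConstruction}), and when it exists the subspace produced by the construction of that lemma is the least one, since any subspace containing $S\cup T$ must contain each stage $S^{(n)}$ of the construction. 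For the three-element condition, if $S\vee T$, $T\vee U$, $U\vee S$ all exist then $S\cup T$, $T\cup U$, $U\cup S$ are all consistent, and then $S\cup T\cup U$ is consistent too: any inconsistent pair in it would have its two members lying in some $X\cup Y$ with $X,Y\in\{S,T,U\}$, contradicting consistency of that set. So Lemma~\ref{lem:inductiveConstruction} once more yields $S\vee T\vee U$.

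The substantial part is modularity of principal ideals. Fix $S\in\Csub{P}$ and regard $S$ as a poset carrying the collinearity relation inherited from $P$; since $S$ is consistent no inconsistency relation is needed, and (CT1), (CT2) are inherited, so $S$ is an ordered space. Because $S$ is a down-closed subspace of $P$, for any subset $T\subseteq S$ the notions ``order ideal of $P$'' and ``order ideal of $S$'' coincide, ``subspace of $P$'' and ``subspace of $S$'' coincide, and consistency is automatic; hence the principal ideal $\{T\in\Csub{P}\mid T\subseteq S\}$ is literally $\Subsp{S}$ (and, being a meet-semilattice with top element $S$, it is a lattice, so its inherited and intrinsic lattice operations agree). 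Thus it suffices to show that the ordered space $S$ is \emph{projective}, for then Theorem~\ref{theo:modular}(2) identifies $\Subsp{S}$ as a modular lattice. Regularity of $S$ is immediate from Regularity of $P$: the witnesses $p'\le p$, $q'\le q$ lie in $S$ because $S$ is an ideal. For the Triangle axiom in $S$, take $\coll{a}{c}{p}$ and $\coll{b}{c}{q}$ with $a,b,c,p,q\in S$; the set $\{a,b,c,p,q\}$ is consistent, so the weak Triangle axiom of $P$ applies and one of its five alternatives holds in $P$. One then checks that each alternative is in fact realized inside $S$: in the first, the element $x$ satisfies $\coll{a}{b}{x}$ with $a,b\in S$, so $x\in S$ since $S$ is a subspace, and the incomparability and ``no further collinear triples'' clauses refer only to the elements $a,b,c,p,q,x\in S$ and to the restriction of $\collSym$, hence transfer verbatim; in each of the remaining alternatives the only new witnesses are elements $\le a$ or $\le p$, which lie in $S$ because $S$ is an ideal, while collinearity and comparability among elements of $S$ agree with those in $P$. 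Hence $S$ satisfies Triangle and is a projective ordered space.

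I expect the case analysis in the last step — checking that each of the five alternatives of weak Triangle can be taken inside $S$ — to be the only real obstacle, and it is entirely driven by the two facts that $S$ is an order ideal (so anything below an element of $S$ is in $S$) and a subspace (so anything collinear with a pair from $S$ is in $S$); once this is recorded, the rest is bookkeeping. The only other point needing care is the identification of the principal ideal $\{T\in\Csub{P}\mid T\subseteq S\}$ with $\Subsp{S}$ for the induced ordered space, which again rests on $S$ being down-closed.
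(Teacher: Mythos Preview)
Your proof is correct and follows essentially the same route as the paper: show $\Csub{P}$ is closed under intersection, use Lemma~\ref{lem:inductiveConstruction} together with consistency of $S\cup T\cup U$ for the three-way join condition, and for modularity of principal ideals observe that a consistent subspace $S$, viewed as an ordered space, satisfies the full Triangle axiom (since weak Triangle applies when the five points are consistent), hence is projective and Theorem~\ref{theo:modular} gives that $\Subsp{S}$ is a modular lattice. Your write-up is more explicit than the paper's in checking that each of the five Triangle alternatives can be witnessed inside $S$ (using that $S$ is both an ideal and a subspace), but the argument is the same.
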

\begin{proof}
It is obvious that $\Csub{P}$ is a semilattice whose $\wedge$ is set intersection $\cap$.
We prove that every principal ideal of $\Csub{P}$ is a modular lattice.
Let $X \in \Csub{P}$. %
Regard $X$ as a subPPIP of $P$.
Since $X$ is consistent, weak Triangle axiom is equivalent to Triangle axiom in $X$. %
In particular, $X$ is a projective ordered space.
By Theorem \ref{theo:modular} and the fact $I_X = \Subsp{X}$, the principal ideal $I_X$ is a modular lattice.

We next show that $X \vee Y \vee Z$ exists provided  $X \vee Y$, $Y \vee Z$, and $Z \vee X$ exist. 
By Lemma \ref{lem:inductiveConstruction}, it suffices to show that $X \cup Y \cup Z$ is consistent. 
This follows from the existence of $X \vee Y$, $Y \vee Z$, and $Z \vee X$.
\end{proof}
\begin{prop}
Let $L$ be a modular semilattice.
Then $\PPIP{L}$ is a PPIP.
\end{prop}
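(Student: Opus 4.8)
The plan is to verify each PPIP axiom for $\PPIP{L}$ in turn, using the known facts that the induced inconsistency relation satisfies (IC1)--(IC2) and the induced collinearity relation satisfies (CT1)--(CT2) (both already recorded in Sections~\ref{sec:3.1} and \ref{sec:3.2}), together with modularity of every principal ideal and the ternary join condition in the definition of modular semilattice. Throughout I work inside $\Lirr$, translating each axiom into a statement about joins in $L$.

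First I would handle Consistent-Collinearity, which should be the easiest. Given a collinear triple $(p,q,r)$ in $\Lirr$, by definition $p\vee q=q\vee r=r\vee p$ exists, so this common element is an upper bound of $\{p,q,r\}$; by (IC1) no inconsistent pair has an upper bound, so $\{p,q,r\}$ is consistent, giving (CC1). For (CC2), suppose $x\in\Lirr$ is consistent with at least two of $p,q,r$, say $p\vee x$ and $q\vee x$ exist. I want to conclude $x$ is consistent with all three, i.e.\ $x\vee r$ exists. Since $p\vee q$ exists and $p\vee x$, $q\vee x$ exist, the ternary join condition of a modular semilattice yields that $p\vee q\vee x$ exists; as $r\le p\vee q$, it follows that $r\vee x$ exists (it is bounded above by $p\vee q\vee x$). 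This proves (CC2).

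Next, weak Triangle. I would reduce it to the Triangle axiom for modular \emph{lattices} already available via Theorem~\ref{theo:modular}. Suppose $\coll{a}{c}{p}$ and $\coll{b}{c}{q}$ hold and $\{a,b,c,p,q\}$ is consistent. By Lemma~\ref{lem:inductiveConstruction} (or directly, using the ternary join condition repeatedly to see the join of all five exists), there is a consistent subspace $S$ containing $\{a,b,c,p,q\}$; regarding $S$ as a subPPIP, consistency makes weak Triangle coincide with Triangle there, and $I_S=\Subsp{S}$ is a modular lattice by Theorem~\ref{theo:modular}. Since the collinearity relation restricted to $\Lirr\cap I_S$ is exactly the induced one from the modular lattice $I_S$ (one must check the $\vee$-irreducible elements of $I_S$ are those elements of $\Lirr$ lying in $I_S$, which follows because joins in the ideal agree with joins in $L$), the Triangle axiom for the modular lattice $I_S$ applies to $a,b,c,p,q$ and delivers one of the five alternatives. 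The elements produced (an $x$ with the stated incomparabilities and collinearities, or various $a',p'$ below $a,p$) all lie in $I_S\subseteq L$, and the collinearities transfer back to $\PPIP{L}$ since collinearity is determined by joins. This yields weak Triangle.

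The Regularity axiom is the one I expect to be the main obstacle, because here there is no consistency hypothesis to localize into a modular lattice. Given a collinear triple $(p,q,r)$ in $\Lirr$ and $r'\in\Lirr$ with $r'\le r$, $r'\not\le p$, $r'\not\le q$, I must produce $p'\le p$, $q'\le q$ in $\Lirr$ with $C(p',q',r')$. The natural candidates are $p':=p\wedge(r'\vee p)$-type meets, or more precisely work in the principal ideal $I_{p\vee q}$ (which equals $I_{q\vee r}=I_{r\vee p}$), a \emph{modular lattice} containing $p,q,r,r'$; apply the Regularity part of Theorem~\ref{theo:modular}'s projective ordered space structure on $\POS{I_{p\vee q}}$ to get $p',q'$ there. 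The subtlety is again the identification of $\vee$-irreducible elements: $r'$ is $\vee$-irreducible in $L$, hence in $I_{p\vee q}$, and I need the $p',q'$ obtained to be genuinely $\vee$-irreducible in $L$ and genuinely $\le p,\le q$ in $L$'s order --- both follow because the ideal inherits order and joins from $L$. I would also double-check the edge behavior when $p\vee q$ does not exist, but collinearity of $(p,q,r)$ forces it to exist, so this case is vacuous. Assembling these four verifications completes the proof that $\PPIP{L}$ is a PPIP.
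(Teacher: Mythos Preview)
Your proposal is essentially correct and follows the paper's approach: verify (CC1)--(CC2) directly from the definitions together with the ternary join property, and reduce Regularity and weak Triangle to Theorem~\ref{theo:modular} by restricting to the principal ideal $I_{l}$ (a modular lattice) for a suitably chosen $l \in L$.

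One point to tighten: your primary route for weak Triangle invokes Lemma~\ref{lem:inductiveConstruction}, but that lemma is stated for PPIPs, and you are in the middle of proving that $\PPIP{L}$ is one---so the citation is circular. Your parenthetical alternative is the right fix and is exactly what the paper does: from pairwise consistency of $\{a,b,c,p,q\}$ one iterates the ternary join condition (e.g.\ $a\vee b\vee c$ exists, then $(a\vee b)\vee c\vee p$ exists, etc.) to conclude that $l' := a\vee b\vee c\vee p\vee q$ exists in $L$. After that, work directly in $\POS{I_{l'}}$: since $I_{l'}$ is a modular lattice, Theorem~\ref{theo:modular}(1) gives that $\POS{I_{l'}}$ is a projective ordered space, hence satisfies Triangle, which yields the desired alternative for $a,b,c,p,q$. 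There is no need to pass through ``$S$ as a subPPIP'' or to argue that $\Subsp{S}$ is a modular lattice; that phrasing reverses the direction of Theorem~\ref{theo:modular} and muddies the logic. Your Regularity argument (restrict to $I_{p\vee q}$) and your (CC2) argument (positive form of the paper's contradiction) are both fine as written.
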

\begin{proof}
Regularity and weak Triangle axiom are shown by the restriction of $L$ to its appropriate principal ideal.
Suppose that the premise of the Regularity axiom holds.
By the definition of the induced collinearity relation, $\{p,q,r\}$ are consistent. 
In particular, $l := p \vee q \vee r$ exist.
The restriction of $\PPIP{L}$ to $\{p \in \PPIP{L} \mid p \leq l\}$, written as $\PPIP{L} \restriction_l$, is isomorphic to $\POS{L}$ as an ordered space.
Hence $\PPIP{L} \restriction_l$ is projective.
Notice that $\PPIP{L} \restriction_l$ contains $p,q,r,r'$ in Regularity axiom.
By Regularity axiom of $\PPIP{L} \restriction_l$, we obtain $p' \leq p$ and $q' \leq q$ such that $C(p', q', r')$ holds.
Weak Triangle axiom is shown by the restriction to $\{p \in \PPIP{L} \mid p \leq l'\}$, where $l' = a \vee b \vee c \vee p \vee q$.

Next we prove Consistent-Collinearity axiom.
The condition (CC1) follows by definition of the induced collinearity relation. 
Suppose to the contrary that (CC2) is not true.
Then there exist $x,p,q,r \in \PPIP{L}$ such that $x \smile p$,  $x$ is consistent with $q$ and $r$, and $C(p,q,r)$ holds.
By the consistency of $\{x,q,r\}$, the join $l := x \vee q \vee r$ exists.
Since $C(p,q,r)$ holds, $p \leq p \vee q = q \vee r \leq l$.
In particular, $l$ is a common upper bound of $x$ and $p$.
This contradicts $x \smile p$.
\end{proof}
\begin{prop}
\label{prop:SemiModularRepMain}
Let $L$ be a modular semilattice. 
Then $L$ is isomorphic to $\Csub{\PPIP{L}}$. 
An isomorphism $\phi \colon L \rightarrow \Csub{\PPIP{L}}$ is given by 
$\phi(l) := \{p \in \PPIP{L} \mid p \leq l \}$. The inverse $\psi$ is given by $\psi(I) := \bigvee_{x \in I} x$.
Here $\psi(\emptyset) = \min L$.
\end{prop}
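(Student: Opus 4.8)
The plan is to verify that $\phi$ and $\psi$ are mutually inverse order-preserving maps, using the already-proved structural results as black boxes. The first task is to check that $\phi$ is well-defined, i.e., that $\phi(l)$ is actually a consistent subspace. Consistency of $\{p \in \Lirr \mid p \le l\}$ is immediate from (IC1): any two elements below $l$ have the common upper bound $l$, so they cannot form an inconsistent pair. The subspace property is equally direct from (CT2): if $p, q \le l$ and $C(p,q,r)$ holds, then the induced collinearity relation (in its ordered-space form, already shown) gives $r \le p \vee q \le l$. And $\phi(l)$ is an ideal by construction. So $\phi$ maps into $\Csub{\PPIP{L}}$, and it is clearly order-preserving since $l \le l'$ implies $\{p \le l\} \subseteq \{p \le l'\}$.

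**Next I would** check that $\psi$ is well-defined, which is where the main content lies. Given a consistent subspace $I \subseteq \Lirr$, I must show $\bigvee_{x \in I} x$ exists in $L$. The natural route is: since $I$ is consistent, I invoke Lemma \ref{lem:inductiveConstruction} to enlarge $I$ to a consistent subspace, but in fact I want to stay inside $I$ itself — the point is that $I$ is \emph{already} a consistent subspace. Then $I$, regarded as a subPPIP, is a projective ordered space (consistency collapses weak Triangle to Triangle, exactly as in the proof of Proposition \ref{prop:PPIPtoModularSemi}), so by Theorem \ref{theo:modular}(2), $\Subsp{I}$ is a modular lattice with maximum element $I$ itself; its image under the isomorphism of Theorem \ref{theo:modular}(1)–(2) is a modular lattice sitting inside $L$ as the principal ideal below some element, and that element is $\bigvee_{x \in I} x$. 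More carefully: the finite-rank hypothesis lets me argue that $\bigvee$ over $I$ reduces to a finite join; I build it up along a maximal chain of $\Subsp{I}$, using at each step that two elements with a common upper bound have a join (the defining property of $\wedge$-semilattices with the stated join axioms), and that the modularity join axiom ($x\vee y, y\vee z, z\vee x$ exist $\Rightarrow x\vee y\vee z$ exists) propagates joinability through the subspace. The consistency of $I$ is precisely what guarantees all the requisite pairwise joins exist.

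**Then** I would verify $\psi \circ \phi = \mathrm{id}_L$. For $l \in L$, $\psi(\phi(l)) = \bigvee \{p \in \Lirr \mid p \le l\}$. Clearly this join is $\le l$. For the reverse inequality I use that in a modular (indeed any finite-rank) semilattice, every element is the join of the \joinIrr elements below it: this is a standard fact following by induction on rank, since a non-\joinIrr element $l = a \vee b$ with $a, b < l$ lets me recurse. Conversely, $\phi \circ \psi = \mathrm{id}$: for $I \in \Csub{\PPIP{L}}$, set $l := \psi(I) = \bigvee_{x\in I} x$; then $\phi(l) = \{p \in \Lirr \mid p \le l\}$ contains $I$, and for the reverse containment I use that $\phi(l)$ restricted to the principal ideal $I_l$ is exactly the subspace-closure computed by Theorem \ref{theo:modular} applied to $I_l$ as a modular lattice — and since $I$ is already a subspace with $\bigvee I = l$, Theorem \ref{theo:modular}(1) forces $\phi(l) = I$. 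The boundary case $\psi(\emptyset) = \min L$ is consistent with this since the empty join is the bottom and $\phi(\min L) = \emptyset$ (no \joinIrr element is $\le \min L$, as $\min L = \min L \vee \min L$ would make it \joinIrr only vacuously, but it has nothing strictly below — one checks $\min L \notin \Lirr$ only if there is a smaller element; more simply, $p \le \min L$ forces $p = \min L$, and whether $\min L \in \Lirr$ or not, one adopts the convention matching $\psi(\emptyset)$).

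**The main obstacle** will be the well-definedness of $\psi$, specifically showing that the possibly-infinite-looking join $\bigvee_{x \in I} x$ exists; the finite-rank assumption and the reduction to the modular-lattice case via Theorem \ref{theo:modular} are the essential tools, and the delicate point is propagating joinability using the second clause of the definition of modular semilattice rather than assuming a global top. Once that is in hand, both composition identities follow from Theorem \ref{theo:modular} applied to the principal ideal $I_{\psi(I)}$ together with the "every element is a join of \joinIrr elements below it" lemma, and the rest is bookkeeping.
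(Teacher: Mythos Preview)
Your proposal is correct and follows essentially the same route as the paper: check that $\phi$ and $\psi$ are well-defined order-preserving maps, prove $\psi\circ\phi=\mathrm{id}$ via decomposition of $l$ into \joinIrr elements, and prove $\phi\circ\psi=\mathrm{id}$ by restricting to the principal ideal $I_{\psi(I)}$ and invoking Theorem~\ref{theo:modular} for the modular lattice case. The only difference is that the paper dispatches well-definedness of $\psi$ in one line (``by the consistency''), whereas you take an unnecessary detour through viewing $I$ as a projective ordered space before arriving at the same direct argument from the modular-semilattice join axiom and finite rank.
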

\begin{proof}
First we show that both $\phi$ and $\psi$ are well-defined order preserving maps. 
By the consistency, $\psi$ is well-defined.
It is easy to show that both $\phi$ and $\psi$ preserve the partial orders.
Let us check that $\phi(l)$ is indeed a consistent subspace.
It is trivial that $\phi(l)$ is an ideal.
Since $l$ is a common upper bound of $\phi(l)$, every pair in $\phi(l)$ has its join in $L$.
In particular, $\phi(l)$ is consistent.
Suppose $(p,q,r)$ is a collinear triple and $p,q, \in \phi(l)$. 
By the definition of the induced collinearity relation, $r \leq r \vee p = p \vee q \leq l$.
Hence $r \in \phi(l)$.
This means that $\phi(l)$ is a subspace.

Next we prove $\psi \circ \phi$ is the identity map.
Clearly, $\app{\psi}{\app{\phi}{\min L}} = \min L$.
Let $l \in L \setminus \{ \min L \}$.
Since $l$ is a common upper bound of $\phi(l)$, we have $\app{\psi}{\app{\phi}{l}} = \bigvee_{x \in \phi(l)} x \leq l$.
By the finite length condition of $L$, $l$ is decomposed into \joinIrr elements $\{p_i\}$ as $l = p_1 \vee p_2 \vee \dots \vee p_n$.
Since $\{p_i\} \subseteq \phi(l)$, we have $l = p_1 \vee p_2 \vee \dots \vee p_n \leq \bigvee \phi(l)$.
Hence $\app{\psi}{\app{\phi}{l}} = l$.

Finally we show that $\phi \circ \psi$ is the identity map. 
Let $X \in \Csub{\PPIP{L}}$. 
We prove that $X = \app{\phi}{\app{\psi}{X}}$ by restricting $L$ to the principal ideal $I_{\psi(X)}$.
Notice that $\POS{I_{\app{\psi}{X}}}$ is isomorphic to the restriction of $\PPIP{L}$ to $\app{\phi}{\app{\psi}{X}}$ as an ordered space.
We can easily check that $X$ is a subspace of $\POS{I_{\app{\psi}{X}}}$.
The restriction of $\phi$ and $\psi$ to $I_{\psi(X)}$ and $\app{\phi}{\app{\psi}{X}}$ respectively, is the same as in Theorem \ref{theo:modular}.
Hence $X = \app{\phi}{\app{\psi}{X}}$ follows from Theorem \ref{theo:modular}.
\end{proof}
Thus we completed the proof of Theorem \ref{theo:BirkhoffSemiModular} (1).
Next we prove (2).
\begin{lem}
\label{lem:joinSemiModular}
Let $P$ be a PPIP. 
For any $S,T \in \Csub{P}$, if $S \cup T$ is consistent, then the join of $S,T$ exists in $\Csub{P}$ and is given by
\begin{equation}
\label{eq:join}
S \vee T = S \cup T \cup \{r \in P \mid \exists p \in S, q \in T \text{ such that } C(p,q,r) \text{ holds} \}.
\end{equation}
\end{lem}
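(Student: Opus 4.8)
The plan is to verify that the right-hand side of \eqref{eq:join}, call it $U$, is the least consistent subspace containing both $S$ and $T$. The argument splits into three parts: (a) $U$ is consistent; (b) $U$ is a subspace; (c) $U$ is contained in every consistent subspace containing $S \cup T$. Once these are established, $U$ is by definition the join $S \vee T$ in the poset $(\Csub{P}, \subseteq)$, whose meet is intersection by Proposition~\ref{prop:PPIPtoModularSemi}.

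For part (a), the cleanest route is to invoke Lemma~\ref{lem:inductiveConstruction}: since $S \cup T$ is consistent by hypothesis, it extends to some consistent subspace, so in particular the consistent-closure process applied to $S \cup T$ stays consistent. But I want the \emph{explicit} form \eqref{eq:join}, so I would instead argue directly that one round of ``add collinear completions, then add everything below'' already produces a subspace, using the structure of a PPIP. Concretely: first show $U$ is consistent. Take $x, y \in U$; by the three-case analysis exactly as in the proof of Lemma~\ref{lem:inductiveConstruction} (both in $S\cup T$; one new, one old; both new), the new elements arise as third members of collinear triples with members of $S \cup T$, and (CC2) propagates consistency from the two old members to the new one. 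The only twist is that here the ``old'' set $S \cup T$ is not just any consistent set but a union of two subspaces; I do not think that matters for consistency, so case analysis plus (CC2) suffices. Then $U$ is also downward closed after we adjoin all elements below, but one must check consistency is preserved under taking lower elements — that is immediate from (IC2) in contrapositive, exactly as in the last paragraph of the proof of Lemma~\ref{lem:inductiveConstruction}.

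For part (b), I must show $U$ is a subspace: if $p, q \in U$ and $C(p,q,r)$ holds, then $r \in U$. The subtle point is that $p$ or $q$ might itself be one of the ``newly added'' collinear-completion elements rather than lying in $S \cup T$, so a single round of completion need not obviously close up. This is where the \textbf{Regularity} and \textbf{weak Triangle} axioms enter, and I expect this to be the main obstacle. The strategy is: since $U$ is consistent, $l := \bigvee U$ — more carefully, the join in $L = \Csub{P}$ — need not a priori exist, so instead I would pass to a principal ideal. Pick a finite consistent subset witnessing $p, q, r$ together with the collinear triples exhibiting $p$ and $q$ as completions; this finite consistent set, being consistent, generates a consistent subspace $W \in \Csub{P}$ by Lemma~\ref{lem:inductiveConstruction}, and $W$ with its restricted relations is a \emph{projective} ordered space (weak Triangle becomes Triangle on a consistent set, as noted in Proposition~\ref{prop:PPIPtoModularSemi}). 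Inside the modular lattice $\Subsp{W}$, the subspace generated by $S \cap W$ and $T \cap W$ is exactly their join, and by Theorem~\ref{theo:modular} together with the explicit join formula for subspaces of a projective ordered space — which is precisely the ``add collinear completions once'' operation in the lattice-theoretic setting (this is the modular-lattice analogue that the present lemma generalizes) — we get that $r$ lies in this join, hence $r \in U$. I would isolate the needed fact about joins of subspaces in a projective ordered space as a small sub-claim, citing \cite{herrmann1994a}, and use Regularity to handle the descent to $r' \le r$ cases when checking the closure is really a subspace.

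For part (c), minimality is routine: any consistent subspace $V$ with $S \cup T \subseteq V$ must contain every $r$ with $C(p,q,r)$, $p \in S \subseteq V$, $q \in T \subseteq V$, by the subspace axiom; hence $U \subseteq V$. Combining (a), (b), (c): $U \in \Csub{P}$, it contains $S$ and $T$, and it is below every common upper bound, so $U = S \vee T$. The write-up should emphasize that the existence half of the statement is where consistency of $S \cup T$ is essential (without it there is no common upper bound at all, since by \textbf{(IC1)} an inconsistent pair has no common upper bound), and that the formula itself is forced by the subspace axiom once existence is known. The one place to be careful is infinite vs.\ finite rank: the paper assumes finite rank throughout this section, so all the ``pass to a finite consistent subset / principal ideal'' maneuvers are legitimate and the induction in Lemma~\ref{lem:inductiveConstruction} terminates.
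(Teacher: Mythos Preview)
Your plan is correct, and the decisive step --- reducing to a projective ordered space and invoking the explicit join formula from \cite{herrmann1994a} --- is exactly what the paper does. The paper's execution, however, is far more streamlined: it applies Lemma~\ref{lem:inductiveConstruction} once to obtain a single consistent subspace $U \supseteq S \cup T$; since $U$ is consistent, it is a projective ordered space and $\Subsp{U}$ coincides with the principal ideal $I_U$ in $\Csub{P}$; then \cite[Lemma~3.1]{herrmann1994a} gives the join of $S$ and $T$ in $\Subsp{U}$ by the formula~\eqref{eq:join}, and this is automatically the join in $\Csub{P}$. Your parts~(a) and~(c) thus become redundant, and your local reduction in part~(b) --- passing to a finitely generated $W$ --- is replaced by a single global embedding.

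One confusion worth flagging: you propose to ``adjoin all elements below'' after the collinear-completion step, but the set in~\eqref{eq:join} is already an ideal and no such step appears in the formula. Regularity is what guarantees this: if $C(p,q,r)$ holds with $p \in S$, $q \in T$ and $r' \le r$, then either $r' \le p$ (so $r' \in S$), or $r' \le q$ (so $r' \in T$), or Regularity supplies $p' \le p$, $q' \le q$ with $C(p',q',r')$, placing $r'$ in the third set. You invoke Regularity later in part~(b) for a different purpose, so the ingredient is present but misplaced.
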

\begin{proof}
By Lemma \ref{lem:inductiveConstruction}, there exists a common upper bound $U \in \Csub{P}$ of $S$ and $T$.
In particular, $S \vee T$ exists.
Since $U$ is consistent, $U$ can be regarded as a projective ordered space and $\Csub{U}$ is identical with $\Subsp{U}$.
It was shown in \cite[LEMMA 3.1]{herrmann1994a} that equation (\ref{eq:join}) holds for any subspaces $S,T$ in a projective ordered spaces.
Thus, $S \vee T$ is given by (\ref{eq:join}).
\end{proof}
\begin{lem}
\label{lem:joinIrrInPPIP}
Let $P$ be a PPIP.
Then the family of \joinIrr elements of $\Csub{P}$ is equal to that of principal ideals of $P$. 
\end{lem}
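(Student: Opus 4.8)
The plan is to establish the two inclusions between the family of $\vee$-irreducible elements of $\Csub{P}$ and the family of principal ideals $I_x = \{p \in P \mid p \le x\}$ (which we first observe are themselves consistent subspaces: consistency of $I_x$ follows from (CC2) applied with a witness collinear triple, or more directly since any two elements below $x$ have the common upper bound $x$ and hence cannot be inconsistent by (IC1); that $I_x$ is a subspace follows from (CT2)).

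First I would show that each principal ideal $I_x$ is $\vee$-irreducible in $\Csub{P}$. Suppose $I_x = S \vee T$ with $S, T \in \Csub{P}$. Then $x \in S \vee T$, so by the explicit formula for the join in Lemma~\ref{lem:joinSemiModular} (applicable since $S \cup T \subseteq I_x$ is consistent), either $x \in S$, or $x \in T$, or there exist $p \in S$, $q \in T$ with $C(p,q,x)$. In the first case $I_x \subseteq S \subseteq I_x$, so $I_x = S$; similarly for the second. In the third case (CT1) forces $p, q$ to be strictly below $x$, so $p, q \in I_x \setminus \{x\}$ already, and in fact $S, T \subseteq I_x$ with $x \notin S$ and $x \notin T$; but then $x \notin S \cup T$, and the only way $x$ enters $S \vee T$ is via a collinear triple $C(p', q', x)$ with $p' \in S$, $q' \in T$ — which is exactly the situation we are in — so $S \vee T = I_x$ can still hold with both $S, T$ proper. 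Here is the key point: I will argue that such a decomposition forces $p$ and $q$ to generate, together with $x$, too small a subspace, or rather I will instead directly appeal to the correspondence in the principal ideal. Since $S \vee T = I_x = \Subsp{I_x}$ is a modular lattice with $x$ as its top element, and $x$ is an element of the modular lattice $\Csub{P}$, whether $x$ (as a subspace) is $\vee$-irreducible is a property internal to $I_x$; and inside the projective ordered space $I_x$ it is classical (Theorem~\ref{theo:modular}, part (2), together with the fact that $\vee$-irreducibles of $\Subsp{I_x}$ are exactly the principal ideals of $I_x$) that the top element $x = I_x$ itself is $\vee$-irreducible iff $x$ is not collinear to two smaller elements — and if it were, $I_x$ would be a principal ideal that is not $\vee$-irreducible in $\Csub{I_x}$, contradicting $\vee$-irreducibility being preserved. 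The cleanest route, which I would adopt, is: $x$ is $\vee$-irreducible in $\Csub{P}$ iff it is $\vee$-irreducible in the principal ideal $I_x$ (since a decomposition of $I_x$ lives in $I_x$), and the latter is a statement about the modular lattice $\Subsp{I_x}$ whose $\vee$-irreducibles are its principal ideals by the modular-lattice case.

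Conversely, I would show that every $\vee$-irreducible $X \in \Csub{P}$ is a principal ideal. By Proposition~\ref{prop:SemiModularRepMain} (applied with $L = \Csub{P}$) we have $X = \phi(\psi(X))$, but more concretely: pick any maximal element $x$ of $X$ (it has finite rank). Then $I_x \subseteq X$, and $I_x \in \Csub{P}$. If $I_x = X$ we are done. Otherwise, write $X$ as a join of principal ideals $I_{x_1} \vee \dots \vee I_{x_m}$ ranging over maximal elements of $X$ — this equals $X$ because each $x_i \in X$ and $X$, being a subspace, is closed under the join operation of Lemma~\ref{lem:joinSemiModular}, and conversely each $I_{x_i} \subseteq X$. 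Since $X$ is $\vee$-irreducible, $X = I_{x_i}$ for some $i$, so $X$ is a principal ideal.

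The main obstacle is the first direction: ruling out a genuine decomposition $I_x = S \vee T$ with $S, T$ proper subspaces via a collinear triple through $x$. The resolution is to reduce cleanly to the already-established modular-lattice statement inside the principal ideal $I_x$ — observing that $\vee$-irreducibility of an element $y$ in any lattice depends only on the principal ideal $I_y$, and that $I_x$ is a projective ordered space by Proposition~\ref{prop:PPIPtoModularSemi}'s argument, so Theorem~\ref{theo:modular}(2) applies. I would make sure to state and use the (folklore, but needed) fact that in $\Subsp{Q}$ for a projective ordered space $Q$, the $\vee$-irreducibles are precisely the principal ideals $I_q$; this is implicit in Theorem~\ref{theo:modular} but I would spell it out or cite \cite{herrmann1994a}.
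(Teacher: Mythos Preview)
Your second direction (every $\vee$-irreducible is a principal ideal) is correct and matches the paper's argument: write $X = \bigvee_{x \in X'} I_x$ for a finite $X'$ and use $\vee$-irreducibility.

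In the first direction you misapply (CT1). Axiom (CT1) says that a collinear triple is pairwise \emph{incomparable}, not that two of its members lie strictly below the third. Once you read it correctly, the third case dies instantly: if $I_x = S \vee T$ and $C(p,q,x)$ holds with $p \in S$, then $p \in S \subseteq S \vee T = I_x$ gives $p \le x$, contradicting the incomparability of $p$ and $x$ from (CT1). This is exactly the paper's argument, and there is nothing further to rule out. Your sentence ``$S \vee T = I_x$ can still hold with both $S,T$ proper'' is therefore wrong, and the elaborate detour you build afterwards is unnecessary.

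That detour (reduce to the principal ideal, view it as a projective ordered space, and invoke the modular-lattice analogue of this very lemma) would work in principle, but it off-loads the content to a statement you do not prove and which is not explicitly isolated in Theorem~\ref{theo:modular}. You would have to extract ``principal ideals of a projective ordered space are exactly the $\vee$-irreducibles of its subspace lattice'' from the isomorphism $\POS{\Subsp{Q}}\cong Q$, and the cleanest proof of \emph{that} is again the one-line (CT1) contradiction above. So fix your reading of (CT1) and give the direct argument.
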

\begin{proof}
We first prove that any principal ideal is a \joinIrrLast{} element.
Let $p \in P$. 
The principal ideal $I_p$ of $p$ is clearly a consistent subspace.
Suppose that there exist $X, Y \in \Csub{P}$ such that $I_p = X \vee Y$.
Then $p \in X \vee Y$.
By Lemma \ref{lem:joinSemiModular}, one of the following conditions holds: 
$p \in X$ or $p \in Y$;
there exist $x \in X$ and $y \in Y$ such that $C(p,x,y)$ holds. 
In the first case, we can assume $p \in X$ without loss of generality.
Then $I_p \subseteq X$ since $X$ is an ideal.
This implies $I_p = X$.
In the second case, $p$ and $x$ are incomparable by (CT1).
In particular, $x \not \in I_p$.
Hence $I_p \subsetneq X \vee Y$.
This contradicts $I_P = X \vee Y$.
We have thus proved that principal ideal is \joinIrrLast.

We next show that every \joinIrr element is a principal ideal.
Let $X$ be a \joinIrr element of $\Csub{P}$.
Obviously $X$ is written as $X = \bigvee_{x \in X} I_{x}$.
By the finite length condition of $P$, there is a finite subset $X' \subseteq X$ such that $X = \bigvee_{x \in X'} I_{x}$.
Since $X$ is \joinIrrLast, X equals one of the components in the right-hand side. 
This means that $X$ is a principal ideal.
\end{proof}
\begin{prop}
Let $P$ be a PPIP.
Then $P$ is isomorphic to $\PPIP{\Csub{P}}$, an isomorphism $f \colon P \rightarrow \PPIP{\Csub{P}} $ is given by $\phi(p) = I_p$.
\end{prop}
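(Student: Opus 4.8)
The plan is to check that $f\colon p \mapsto I_p$ is a bijection onto $\PPIP{\Csub{P}}$ that preserves and reflects each ingredient of the PPIP structure: the partial order, the inconsistency relation, and the collinearity relation. Since $\Csub{P}$ is a modular semilattice (Proposition~\ref{prop:PPIPtoModularSemi}), $\PPIP{\Csub{P}}$ is a bona fide PPIP, and by Lemma~\ref{lem:joinIrrInPPIP} its underlying set is exactly $\{I_p \mid p \in P\}$; hence $f$ is well defined and surjective. It is injective because $p = \max I_p$. For the order, $p \leq q$ clearly gives $I_p \subseteq I_q$, and conversely $I_p \subseteq I_q$ gives $p \in I_p \subseteq I_q$, i.e.\ $p \leq q$; so $f$ is an order isomorphism onto its image.

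For the inconsistency relation I would first record that $I_p \cup I_q$ is consistent whenever $p \not\smile q$: an inconsistent pair $x \smile y$ with $x \leq p$, $y \leq q$ would, by (IC2), force $p \smile q$. Now if $p \smile q$, then no consistent subset of $P$ contains both $p$ and $q$, so $I_p$ and $I_q$ have no common upper bound in $\Csub{P}$ and $I_p \vee I_q$ does not exist. Conversely, if $p \not\smile q$, then $\{p,q\}$ is consistent, hence contained in some $S \in \Csub{P}$ by Lemma~\ref{lem:inductiveConstruction}, and $S$ is a common upper bound of $I_p$ and $I_q$, so $I_p \vee I_q$ exists. Thus $p \smile q$ if and only if $I_p \vee I_q$ fails to exist, which is exactly the induced inconsistency $I_p \smile I_q$ in $\PPIP{\Csub{P}}$.

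The substantive step is the collinearity relation, and the idea is to localize to a consistent subspace where Theorem~\ref{theo:modular}(2) applies. Suppose $C(p,q,r)$ holds in $P$. By (CC1) the set $\{p,q,r\}$ is consistent, so by Lemma~\ref{lem:inductiveConstruction} it lies in some $U \in \Csub{P}$. As in the proof of Proposition~\ref{prop:PPIPtoModularSemi}, $U$ is a projective ordered space and $I_U = \Subsp{U}$; moreover the principal ideal of any $w \in U$ taken inside $U$ equals $I_w$, since $U$ is an ideal of $P$. By Theorem~\ref{theo:modular}(2) the map $w \mapsto I_w$ is an isomorphism of ordered spaces $U \to \POS{\Subsp{U}}$, so it sends the collinear triple $(p,q,r)$ to a collinear triple $(I_p,I_q,I_r)$ of $\POS{\Subsp{U}}$; by definition of the induced collinearity this means $I_p,I_q,I_r$ are pairwise incomparable and $I_p \vee I_q = I_q \vee I_r = I_r \vee I_p$, with joins computed in $\Subsp{U}=I_U$. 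Since $I_p,I_q,I_r \subseteq U$, each of these joins also exists in $\Csub{P}$ and agrees with the join in $I_U$; hence $C(I_p,I_q,I_r)$ holds in $\PPIP{\Csub{P}}$. Conversely, if $C(I_p,I_q,I_r)$ holds in $\PPIP{\Csub{P}}$, then by the inconsistency analysis the common value $W := I_p \vee I_q = I_q \vee I_r = I_r \vee I_p$ exists in $\Csub{P}$, is consistent, and contains $p,q,r$; repeating the argument with $W$ in place of $U$ — now joins in $\Csub{P}$ of subspaces below $W$ coincide with joins in $\Subsp{W}$ — shows $(I_p,I_q,I_r)$ is collinear in $\POS{\Subsp{W}}$, and transporting back through the isomorphism of Theorem~\ref{theo:modular}(2) gives $C(p,q,r)$ in $W$, hence in $P$. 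Combining the four parts shows $f$ is a PPIP isomorphism.

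I expect the main obstacle to be the bookkeeping for collinearity: verifying that the induced collinearity of $\PPIP{\Csub{P}}$, restricted to the principal ideals lying below a fixed consistent subspace, coincides with the induced collinearity of the modular lattice $\Subsp{U}$ — which rests entirely on joins being computed locally below $U$ — and being careful that the Birkhoff correspondence of Theorem~\ref{theo:modular}(2) is an isomorphism of ordered spaces, so that collinear triples transfer in both directions. The bijectivity, the order, and the inconsistency parts are routine.
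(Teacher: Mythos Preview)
Your argument is correct. The bijectivity, order, and inconsistency steps coincide with the paper's proof almost verbatim. The collinearity step, however, is handled differently. The paper argues directly: for the forward direction it uses (CT1), (CC1), and Lemma~\ref{lem:joinSemiModular} to show $I_p \subseteq I_q \vee I_r$ and concludes by symmetry; for the converse it again invokes Lemma~\ref{lem:joinSemiModular} to extract $p' \leq p$, $q' \leq q$ with $C(p',q',r)$, and then a short lattice computation forces $p'=p$, $q'=q$. Your route instead localizes to a consistent subspace $U$ (respectively $W$), recognizes it as a projective ordered space with $I_U = \Subsp{U}$, and transports the collinearity of $(p,q,r)$ and of $(I_p,I_q,I_r)$ through the ordered-space isomorphism $w \mapsto I_w$ of Theorem~\ref{theo:modular}(2). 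This is cleaner conceptually, since it packages the whole collinearity verification into the already-established modular-lattice case, and it avoids the somewhat fiddly computation showing $p'=p$, $q'=q$. The price is that you must check (as you do) that joins of principal ideals below $U$ agree whether computed in $\Csub{P}$ or in $\Subsp{U}$, and you are implicitly using that the isomorphism in Theorem~\ref{theo:modular}(2) is the map $w \mapsto I_w$ and that it preserves and reflects collinearity; the paper's statement of that theorem does not spell out the isomorphism for part~(2), so in a fully self-contained write-up you would want to note this explicitly.
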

\begin{proof}
By Lemma \ref{lem:joinIrrInPPIP}, the function $f$ is a well-defined bijection. 
In the rest of the proof, we check that $f$ preserves all relations on $P$.
Clearly $f$ preserves the partial order of $P$.

We next prove that $f$ preserves the inconsistency relation of $P$. 
Let $(p, q)$ be an inconsistent pair in $P$.
Then there are no consistent subspaces including $I_p \cup I_q$.
Hence $I_p \vee I_q$ does not exist.
Thus $I_p$ and $I_q$ are inconsistent in $\Csub{P}$.
Conversely, suppose that $I_p$ and $I_q$ are inconsistent in $\PPIP{\Csub{P}}$.
Since there are no consistent subspaces including $I_p$ and $I_q$, the union $I_p \cup I_q$ contains an inconsistent pair $(x , y)$.
By (IC1), we can assume that $x \in I_p$ and $y \in I_q$.
Then $p \smile q$ follows from (IC2).

We finally show that $f$ preserves the collinearity relation.
Let $(p,q,r)$ be a collinear triple in $P$.
Since $p$, $q$, and $r$ are pairwise incomparable by (CT1), so are $I_p$, $I_q$, and $I_r$.
If the existence of $I_q \vee I_r$ and $I_p, I_q \subseteq I_q \vee I_r$ are proved, 
then the existence of $I_q \vee I_r$ and $I_r \vee I_p$, and $I_p \vee I_q = I_q \vee I_r = I_r \vee I_p$ follow from symmetry, that is, $C(I_p, I_q, I_r)$ holds.
The condition (CC1) implies $q \not \smile r$.
By (IC2), the union $I_q \cup I_r$ is also consistent.
By Lemme \ref{lem:inductiveConstruction}, $I_q \vee I_r$ exist.
Since $C(p,q,r)$ holds, it must hold that $p \in I_q \vee I_r$ by Lemma \ref{lem:joinSemiModular}.
Then $I_p \subseteq I_q \vee I_r$ because $I_q \vee I_r$ is an ideal.
We have thus shown that $I_p, I_q \subseteq I_q \vee I_r$ as required.

Conversely, suppose $C(I_p, I_q, I_r)$ holds.
Since $I_r \subseteq I_r \vee I_q = I_p \vee I_q$, we have $r \in I_p \vee I_q$.
By Lemme \ref{lem:joinSemiModular}, one of the following conditions holds:
(i) $r \in I_p$ or $r \in I_q$;
(ii) there are $p' \in I_p$ and $q' \in I_q$ such that $C(p',q',r)$ holds.
Condition (i) contradict to the incomparability of $I_p, I_q, I_r$.
Hence condition (ii) is true.
Since we have proved that the collinearity of $I_p,I_q,I_r$ follows from that of $p, q, r$, condition (ii) implies that $C(I_{p'}, I_{q'}, I_r)$ holds.
Then
\[
I_p \vee I_q = I_q \vee I_r \subseteq I_q \vee (I_r \vee I_{q'}) = I_q \vee (I_{p'} \vee I_{q'}) = I_{p'} \vee I_q.
\]
Here we used the collinearity of $I_p, I_q, I_r$ and $I_{p'}, I_{q'}, I_r$, and $q' \leq q$.
This inequality implies $p \in I_{p'} \vee I_{q}$.
By using Lemma \ref{lem:joinSemiModular} again, one of the following conditions hold: 
(i) $p \in I_{p'}$;
(ii) $p \in I_{q}$;
(iii) there are $p'' \in I_{p'}$ and $q'' \in I_{q}$ such that $C(p,p',q'')$ holds.
Condition (ii) contradicts the incomparability of $p$ and $q$.
Since $p'' \leq p' \leq p$, condition (iii) contradicts (CT1).
Hence Condition (i) holds, which means $p = p'$.
We can prove $q = q'$ by the same argument.
We have thus proved that $C(p,q,r)$ holds.
\end{proof}
\section{{\joinSub{}} set in {$L^n$}}
\label{sec:5}
A modular semilattice typically arises as a \joinSub{} set 
of the $n$-product $L^n$ of some small modular semilattice $L$.
In this section, we investigate computational and algorithmic aspects on PPIP-representations  of \joinSub{} sets in $L^n$. 
In Section \ref{sec:5.1}, 
we show that any \joinSub{} set in $L^n$ 
admits a PPIP of size polynomial 
in $n$ and $|L^{\rm ir}|$ (Theorem \ref{thm:upperBound}).
In Section \ref{sec:6.2}, we present a polynomial time algorithm to compute the PPIP-representation of a \joinSub{} set in $L^n$ using Membership Oracle.
In this section, all semilattices are assumed to be finite.
\subsection{$O(n|L^{\mathrm{ir}}|)$-bound of \joinIrr elements}
\label{sec:5.1}
In this section, we show the $\Theta(n)$ representations complexity of \joinSub{} sets in $L^n$.
Let $L$ be a semilattice.
The symbol $L^n$ denotes an $n$-product of $L$, whose partial order is the product order.
Notice that we can compute $\wedge$ and $\vee$ of $L^n$ in the component-wise manner, that is, the following identity holds for any $\bm{l} = (l_1, l_2, \dots, l_n) \in L^n$ and $\bm{l}' = (l_1', l_2', \dots, l_n')$:
\begin{align*}
&\bm{l} \wedge \bm{l'} = (l_1 \wedge l_1', l_2 \wedge l_2', \dots, l_n \wedge l_n'),\\
&\bm{l} \vee \bm{l'} = (l_1 \vee l_1', l_2 \vee l_2', \dots, l_n \vee l_n') \text{ ($\bm{l} \vee \bm{l'}$ exists if all $l_i \vee l_i'$ exist).}
\end{align*}
A subset $B \subseteq L^n$ is said to be \textit{\joinSub{}} if $b_1 \wedge b_2 \in B$ for any $b_1, b_2 \in B$ and $b_1 \vee b_2 \in B$ for any $b_1, b_2 \in B$ such that $b_1 \vee b_2$ exists in $L$.
If $L$ is a modular semilattice, then $L^n$ and $B$ are modular semilattice.
In the following, let $L$ be a semilattice and $B$ a \joinSub{} set in $L^n$ without further mentioning.

Our compact representation theorems are valid for $B$ if $L$ is a modular semilattice.
However computational problems still remain.
As the cardinality of $L^n$ grows exponentially, so may that of $\PPIP{B}$. 
Moreover, it is unrealistic enumerating \joinIrr elements of $B$ by a brute-force search.
Hirai and Oki \cite{hirai2016lecture} solved these problems for \joinSub{} sets of ${S_k}^n$, where $S_k$ is a $k+1$ element semilattice such 
that elements other than the minimum element are pairwise incomparable,

We generalize this result to \joinSub{} sets of arbitrary semilattices. 
In this section, we give the upper bound of $\PPIP{B}$.
The enumerating problem will be treated in the next section. %
We owe the following theorem and its proof to a discussion with Taihei Oki.
\begin{thm}
\label{thm:upperBound}
Let $L$ be a semilattice and $B$ a \joinSub{} set in $L^n$.
The cardinality of \joinIrr elements of $B$ is at most $n|\Lirr|$.
\end{thm}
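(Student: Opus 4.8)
The plan is to associate, to each $\vee$-irreducible element $b \in B^{\mathrm{ir}}$, a pair $(i, \ell)$ with $i \in [n]$ and $\ell \in L^{\mathrm{ir}}$, and to show this assignment is injective; this immediately yields $|B^{\mathrm{ir}}| \le n|L^{\mathrm{ir}}|$. The natural candidate for the coordinate $i$ is a coordinate where $b$ ``genuinely differs'' from the elements of $B$ strictly below it, and the natural candidate for $\ell$ is the value $b_i$ in that coordinate, which I expect to be $\vee$-irreducible in $L$.

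First I would set up the following. Since $B$ is finite and $b$ is $\vee$-irreducible, $b$ covers a unique element $b^{-}$ in $B$: indeed, if $b$ had two distinct lower covers $c, c'$ in $B$, then $c \vee c'$ exists in $B$ (it lies in the principal ideal $I_b$ of the modular semilattice $B$, hence the join is taken in $B$), equals $b$ because the principal ideal is a lattice in which $b$ covers both, contradicting $\vee$-irreducibility. (If $b$ is the minimum of $B$ we must handle it separately; at most one such element exists, and since $n|L^{\mathrm{ir}}| \ge 1$ whenever $B^{\mathrm{ir}} \ne \emptyset$, a small bookkeeping argument absorbs it, e.g. by noting the minimum of $B$ is not $\vee$-irreducible unless $B$ has exactly one other element, or simply by a direct count.) Componentwise, $b^{-} \le b$ means $b^{-}_i \le b_i$ for all $i$, and $b^{-} \ne b$, so there is at least one coordinate $i$ with $b^{-}_i < b_i$. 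Fix such an $i = i(b)$ and set $\ell(b) := b_i$.

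Next I would prove the two facts that make the argument work. (a) $\ell(b) = b_i$ is $\vee$-irreducible in $L$: if $b_i = a \vee a'$ with $a, a' < b_i$ in $L$, then consider the two elements of $L^n$ obtained from $b$ by replacing the $i$-th coordinate by $a$ and by $a'$ respectively, each joined with $b^-$ coordinatewise to stay inside the relevant interval; their componentwise meet and join relationships, together with $B$ being $(\wedge,\vee)$-closed and the interval $[b^-, b]$ being a modular (hence well-behaved) lattice, force $b$ to be a proper join in $B$ — contradiction. This is the step I expect to require the most care: one has to exhibit two honest elements of $B$ strictly below $b$ whose join is $b$, and the only freedom we have is in coordinate $i$, so the modularity of the interval $[b^-_i, b_i]$ in $L$ and the fact that $b^-_i < b_i$ must be leveraged precisely. (b) Injectivity: suppose $b \ne b'$ are both $\vee$-irreducible with $i(b) = i(b') = i$ and $b_i = b'_i = \ell$. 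Then $b \wedge b' \in B$ and $b \wedge b' < b$ (since $b \ne b'$ and both are $\vee$-irreducible, they are incomparable — unless one contains the other, which contradicts irreducibility as above), so $b \wedge b' \le b^-$, hence $(b \wedge b')_i \le b^-_i < b_i = \ell$. But $(b\wedge b')_i = b_i \wedge b'_i = \ell \wedge \ell = \ell$, a contradiction.

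The main obstacle is step (a): correctly reducing ``$b_i$ decomposes in $L$'' to ``$b$ decomposes in $B$'', since $B$ is only closed under joins that already exist in $L^n$, so one must check that the two auxiliary elements actually lie in $B$ and that their join is defined and equals $b$. Modularity of $L$ (equivalently, of the principal ideal $I_b$ of $B$) is exactly what guarantees the interval $[b^-, b]$ behaves like a lattice in which covering is tractable, so I would phrase (a) as a statement about the modular lattice $I_b$ and its coordinate projections. Once (a) and (b) are in hand, the bound $|B^{\mathrm{ir}}| \le n|L^{\mathrm{ir}}|$ is immediate from injectivity of $b \mapsto (i(b), \ell(b))$ into $[n] \times L^{\mathrm{ir}}$.
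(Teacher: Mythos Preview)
Your step (a) is false: $\ell(b) = b_i$ need not be $\vee$-irreducible in $L$. Take $n = 1$, let $L = \{0, a, a', 1\}$ be the two-atom Boolean lattice, and let $B = \{0, 1\}$. Then $B$ is $(\wedge,\vee)$-closed, the element $b = 1$ is $\vee$-irreducible in $B$ with unique lower cover $b^- = 0$, and the only coordinate gives $b_1 = 1$, which is \emph{not} $\vee$-irreducible in $L$ since $1 = a \vee a'$. Your proposed remedy---replace coordinate $i$ of $b$ by $a$ or $a'$ and join with $b^-$---produces elements of $L^n$ that have no reason to lie in $B$; in the example $a, a' \notin B$, so there is no way to exhibit $b$ as a proper join inside $B$. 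Note too that the theorem is stated for arbitrary (finite) semilattices $L$, so your appeals to modularity of $L$ are not available.

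The paper's argument keeps your overall shape but changes the target of the map. After reducing to $n = 1$ (replacing $L$ by $L^n$), it sends $b \in B^{\mathrm{ir}}$ to any $p \in L^{\mathrm{ir}}$ with $p \le b$ and $p \not\le \underline{b}$; such $p$ exists because $l \mapsto \{q \in L^{\mathrm{ir}} : q \le l\}$ is injective. Injectivity of $b \mapsto p$ then uses that $\phi(b_1) \cap \phi(b_2) = \phi(b_1 \wedge b_2)$ and, for incomparable $\vee$-irreducibles $b_1, b_2 \in B$, that $b_1 \wedge b_2 = \underline{b_1} \wedge \underline{b_2}$. Your injectivity argument (b) for the incomparable case is essentially this computation; but your dismissal of the comparable case (``contradicts irreducibility'') is wrong---distinct $\vee$-irreducibles can certainly be comparable. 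What actually handles that case is that $b' < b$ forces $b' \le \underline{b}$, so if $i(b) = i(b')$ one gets $b'_i \le \underline{b}_i < b_i$, ruling out $b_i = b'_i$.
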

\begin{proof}
It suffices to show that $|\Birr| \leq |\Lirr|$ for any semilattice $L$ and its \joinSub{} set $B$ since the cardinality of \joinIrr of $L^n$ is $n|\Lirr|$.
We prove this claim by constructing an injection $g \colon \Birr \rightarrow \Lirr$.
A function $\phi \colon L \rightarrow \power{\Lirr}$ is defined by $\phi(l) := \{p \in \Lirr \mid p \leq l\}$, and a partial function $\psi \colon \power{\Lirr} \rightarrow L$ is defined by $\psi(I) = \bigvee I$.
Notice that we can prove that $\psi \circ \phi$ is an identity map for arbitrary semilattice $L$ in the same way as Proposition \ref{prop:SemiModularRepMain}.
For any a \joinIrr element $b \in B$, let $\underline{b}$ be the unique lower cover.
Then $g(b)$ is defined as an arbitrary element in $\phi(b) \setminus \phi(\underline{b})$.
We first show that this definition of $g$ is well-defined.
Since $\psi \circ \phi$ is an identity map, $\psi$ is injective.
Hence $\phi(b) \setminus \phi(\underline{b})$ is not empty and $g$ is well-defined.
We next show that $g$ is injective.
Let $b_1, b_2$ be distinct elements in $\Birr$.
We prove $g(b_1) \neq g(b_2)$ by the case analysis for the comparability of $b_1$ and $b_2$.
Suppose that $b_1$ and $b_2$ are comparable.
We can assume that $b_2 \leq b_1$.
Since 
$g(b_1) \in \phi(b_1) \setminus \phi(\underline{b_1})$, 
$g(b_2) \in \phi(b_2)$, and
$\phi(b_2) \subseteq \phi(\underline{b_1})$,
we have $g(b_1) \neq g(b_2)$.
Suppose that $b_1$ and $b_2$ are incomparable.
Then $b_1 \wedge b_2$ equals $\underline{b_1} \wedge \underline{b_2}$.
Suppose to the contrary that $g(b_1) = g(b_2)$.
Then $g(b_1) \in (\phi(b_1) \cap \phi(b_2) ) \setminus (\phi(\underline{b_1}) \cup \phi(\underline{b_2}) )$.
Since 
\[
\phi(b_1) \cap \phi(b_2) = \phi(b_1 \wedge b_2) = 
\phi(\underline{b_1} \wedge \underline{b_2}) \subseteq \phi(\underline{b_1}) \cup \phi(\underline{b_2}),
\]
the set $(\phi(b_1) \cap \phi(b_2) ) \setminus (\phi(\underline{b_1}) \cup \phi(\underline{b_2}) )$ is empty.
This is a contradiction.
We have thus proved $g(b_1) \neq g(b_2)$.
\end{proof}
Our compact representation achieves the lower bound given by Berman et al. \cite{berman2009varieties}.
Berman et al. regarded \textit{generating sets} as compact representations of \textit{algebras}.
Then they characterized how fast the size of generating sets of \textit{subalgebra} in the $n$-product of an algebra grows with respect to $n$ by \textit{$k$-edge terms}.
We apply their result to \joinSub{} sets in products of semilattices and show that the lower bound of such compact representations is $\Omega(n)$. 
In particular, our compact representation is optimal, that is, achieves the lower bound.

We briefly review the results of Berman et al. 
They used \textit{universal algebra} to achieve their goal. 
See \cite{berman2009varieties} for more details, and \cite{sankappanavar1981course} for universal algebra.
An \textit{algebra} is a set endowed with \textit{fundamental operations}, where each fundamental operation is a function $A^k \rightarrow A$.
In the following, we regard semilattices as algebras whose fundamental operations are $\wedge$ and $\sqcup$, where $a \sqcup b$ equals $a \vee b$ if $a \vee b$ exists and to $a \wedge b$ otherwise.
Let $A$ be an algebra.
A \textit{subalgebra} of $A$ is a subset $B \subseteq A$ that is closed under all fundmental operations.
We remark that subalgebras of a semilattice are \joinSub{} sets.
Let $B$ be a subalgebra of $A$.
A \textit{generating set} of $B$ is a subset $C \subseteq B$ such that we can obtain all elements in $B$  by applying fundamental operations to those in $C$ iteratively.
Note that the \joinIrr elements of \joinSub{} set forms a generating set.
The $n$-product of $A$ is the algebra whose underlying set is $A^n$ and fundamental operations act in a component-wise manner.
Notice that the $n$-product of semilattices is the $n$-product in a universal algebraic sense.

Let $B$ be a subalgebra of $A^n$.
Berman et al. regarded a generating set of $B$ as a compact representation of $B$, and characterized how fast the minimum cardinality of a generating set grows with respect to $n$.
Indeed, a generating set contains all information about $B$, and can be regarded as a compact representation.
Let us define a function $g$ which measures the size of such representations.
Let $A$ be an algebra, $B$ a subalgebra of $A^n$, and $g_A^B(n)$ the minimum of the cardinalities of generating sets of $B$.
We define $g_A(n)$ by $g_A(n) = \max_{B} g_A^B(n)$, where $B$ runs over all subalgebras of $A^n$.

\begin{thm}
\label{thm:bermannBoundForSemilattice}
There is a semilattice $L$ such that $g_L(n) = \Omega(n)$.
\end{thm}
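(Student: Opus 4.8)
The plan is to give the simplest possible witness, namely the two‑element chain. Let $L=\{0,1\}$ with $0<1$; this is a distributive lattice, hence in particular a semilattice, and since $L^{n}$ is a lattice the derived operation $\sqcup$ of Berman et al.\ simply equals $\vee$ on $L^{n}$. For each $n$ I would take $B_{n}\subseteq L^{n}$ to be a maximal chain, say $B_{n}=\{v_{0}<v_{1}<\cdots<v_{n}\}$, where $v_{i}$ is the vector with $1$ in its first $i$ coordinates and $0$ elsewhere, so $|B_{n}|=n+1$. Because $v_{i}\wedge v_{j}=v_{\min(i,j)}$ and $v_{i}\vee v_{j}=v_{\max(i,j)}$, the set $B_{n}$ is closed under $\wedge$ and under $\vee$, i.e.\ it is a \joinSub{} subset of $L^{n}$, hence a subalgebra in the sense used above.

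The crux is to show that $B_{n}$ admits no small generating set. Here I would use that $B_{n}$ is a chain and is itself a subalgebra: restricted to $B_{n}$, the operations $\wedge$ and $\sqcup$ become $\min$ and $\max$, which always return one of their arguments, so every subset of $B_{n}$ is already closed under the fundamental operations. Consequently, for every $C\subseteq B_{n}$ the subalgebra generated by $C$ equals $C$ itself, so $C$ generates $B_{n}$ only when $C=B_{n}$. Hence $g_{L}^{B_{n}}(n)=|B_{n}|=n+1$, and therefore
\[
g_{L}(n)\ \ge\ g_{L}^{B_{n}}(n)\ =\ n+1\ =\ \Omega(n).
\]
Together with Theorem~\ref{thm:upperBound}, which gives $g_{L}(n)=O(n)$, this yields $g_{L}(n)=\Theta(n)$, confirming that the PPIP‑representation of a \joinSub{} set of $L^{n}$ has optimal (linear) complexity; one may also view this as the instance of the edge‑term characterization of Berman et al.\ in which the bound $O(n)$ cannot be improved.

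I do not expect a genuine obstacle: the argument is elementary once one observes that a chain is ``rigid'' under $\wedge$ and $\sqcup$. The only points needing a moment's care are (i) checking that $B_{n}$ is closed under the operation $\sqcup$ used by Berman et al.\ (immediate, since $\sqcup=\vee$ on the lattice $L^{n}$), and (ii) the conventions around the minimum element of $L^{n}$ in the definitions of $L^{\mathrm{ir}}$ and of a generating set; even discarding the minimum, the $n$ remaining elements of $B_{n}$ are each simultaneously $\wedge$‑irreducible and \joinIrr in $B_{n}$, so they must lie in every generating set, and the lower bound $\Omega(n)$ is unaffected. If a genuinely non‑lattice example is preferred, the identical argument applies to any maximal chain of $S_{k}^{n}$ for any fixed $k\ge 1$.
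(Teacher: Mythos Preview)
Your argument is correct and considerably more elementary than the paper's. The paper does not construct an explicit subalgebra; instead it invokes the general machinery of Berman et al.: it verifies that the operation $t(x,y,z,w)=(y\wedge z)\sqcup(z\wedge w)\sqcup(w\wedge y)$ is a $3$-edge term for semilattices, then takes the four-element chain $0<a<b<1$ and shows it has no $2$-edge term (equivalently, no Mal'cev term) because this chain is not congruence-permutable; the Berman et al.\ theorem then yields $g_L(n)=\Omega(n^{3-2})=\Omega(n)$. Your approach bypasses edge terms entirely: by exhibiting a chain $B_n\subseteq L^n$ and observing that $\wedge$ and $\sqcup$ restricted to a chain are projections onto one argument, you get directly that the only generating set of $B_n$ is $B_n$ itself. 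This is self-contained, needs no universal-algebraic background, and in fact works verbatim for any semilattice $L$ with at least two comparable elements (just replace $0,1$ by any pair $a<b$ in $L$), so it proves slightly more than the paper states. The paper's route, on the other hand, situates the result within the general edge-term classification, which is what motivates calling the $O(n)$ upper bound ``optimal'' in that framework. Your side remark (ii) about $\vee$-irreducibles is unnecessary: since both fundamental operations on $B_n$ return one of their arguments, the closure of any $C\subseteq B_n$ is $C$ itself, and no convention about minima enters.
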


Therefore the general lower bound of such a compact representation for $B$ is $\Omega(n)$.
Since our compact representation is a part of that of Berman et al., it achieves the lower bound and is optimal.
The rest of this section is devoted to proving Theorem \ref{thm:bermannBoundForSemilattice}.

\begin{defn}
We refer $(k+1)$-ary operation $t$ satisfying the following equations as a $k$-edge term.
\begin{align*}
t(y,y,x,x,x, \dots,x) &= x,\\
t(y,x,y,x,x, \dots,x) &= x,\\
t(x,x,x,y,x,\dots, x) &= x,\\
t(x,x,x,x,y, \dots,x) &= x,\\
&\vdots\\
t(x,x,x,x,x, \dots, y) &= x.\\
\end{align*}
Let $A$ be an algebra. We say that $A$ has a $k$-edge term if a $k$-edge term is obtained by composing fundamental operations iteratively.
\end{defn}
\begin{thm}[\cite{berman2009varieties}]
Assume that $A$ has a $k$-edge term but no $l$-edge terms for $l < k$.
Then $g_A(n) = \Omega(n^{k-2})$.
\end{thm}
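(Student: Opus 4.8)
The plan is to prove the bound not by analysing an arbitrary generating set from above, but by exhibiting, for each $n$, a single subalgebra $B \le A^n$ that cannot be generated by fewer than $c\,n^{k-2}$ elements, with $c>0$ depending only on $A$. The organizing device is the notion of an \emph{essential} tuple: call $t \in B$ essential if $t \notin \langle B \setminus \{t\}\rangle$. Every generating set $G$ of $B$ must contain every essential $t$, for if $t \notin G$ then $\langle G\rangle \subseteq \langle B\setminus\{t\}\rangle \not\ni t$, so $\langle G\rangle \ne B$. Hence $g_A^B(n)$ is at least the number of essential tuples of $B$, and the whole task reduces to constructing a $B$ with $\Theta(n^{k-2})$ essential tuples.

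First I would convert the hypothesis ``no $(k-1)$-edge term'' into a concrete, $A$-local obstruction. Since $A$ is finite it has only finitely many term operations of each arity, and the failure of the $(k-1)$-edge identities for every candidate $k$-ary operation yields a finite witnessing configuration: a bounded family of argument tuples over a two-element value set $\{a_0,a_1\}\subseteq A$ (playing the roles of $x$ and $y$ in the edge equations) together with a target value that no term operation can produce from those tuples while respecting the edge pattern. Equivalently one packages these witnesses into a relation $R\le A^{m}$ with $m=O(1)$ whose ``diagonal'' target coordinate is not generated by the coordinates realizing the $(k-1)$-edge rows --- precisely the statement that $A$ lacks the interpolation power an edge term would supply at dimension $k-1$.

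Next I would blow this local obstruction up across $[n]$, indexed by $(k-2)$-subsets. For each $(k-2)$-subset $S\subseteq[n]$ I would designate one target tuple $t_S\in A^n$ together with a family of ``side'' tuples, placing the $\{a_0,a_1\}$-pattern of the obstruction on the coordinates of $S$ (with a fixed constant background elsewhere) so that reconstructing $t_S$ from the side tuples would demand exactly the forbidden $(k-1)$-dimensional patching on the coordinates of $S$. Let $B$ be the subalgebra of $A^n$ generated by all the side tuples and all the targets; there are $\binom{n}{k-2}=\Theta(n^{k-2})$ targets.

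The main obstacle, and the technical heart of the argument, is the essentiality proof: showing $t_S\notin\langle B\setminus\{t_S\}\rangle$ for every $S$. I would argue by contradiction: a term operation $f$ witnessing $t_S=f(\vec u)$ with all entries of $\vec u$ in $B\setminus\{t_S\}$ could be restricted to the coordinates indexed by $S$ and, after identifying values according to the edge pattern and reading off the dependence on the generators, would assemble a $k$-ary term operation of $A$ satisfying all $(k-1)$-edge identities --- that is, a $(k-1)$-edge term, contradicting the hypothesis. Making this reduction precise (in particular checking that the remaining generators of $B$, and the closure operations producing further elements, cannot smuggle in the missing interpolation at coordinates outside $S$) is the delicate step, and it is exactly here that the indexing by $(k-2)$-subsets is forced: a larger index set would let the reconstruction proceed using only a $k$-edge term, which $A$ does possess, so the obstruction would disappear. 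Once essentiality is established, $g_A(n)\ge g_A^B(n)\ge\binom{n}{k-2}=\Omega(n^{k-2})$, which is the claim.
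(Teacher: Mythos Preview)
The paper does not prove this theorem; it is quoted verbatim from Berman, Idziak, Markovi\'c, McKenzie, Valeriote, and Willard, and then applied (together with the observation that a four-element chain semilattice has a $3$-edge term but no $2$-edge term) to deduce the $\Omega(n)$ lower bound for semilattices. There is therefore no proof in the paper against which to compare your proposal.

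As for the proposal on its own merits: the high-level strategy---build $B\le A^n$ with $\Theta(n^{k-2})$ essential tuples, each certified essential by the absence of a $(k-1)$-edge term---is indeed the shape of the argument in the cited source. But your sketch leaves the two genuinely hard parts undone. First, converting ``$A$ has no $(k-1)$-edge term'' into a concrete relational obstruction is not automatic: the edge identities are two-variable identities in a $k$-ary operation, and packaging their simultaneous failure into a single invariant relation that survives closure under all term operations (what Berman et al.\ isolate as a \emph{cube term blocker}) is a real theorem, not a compactness remark. Second, in the essentiality step you must show that $t_S$ cannot be generated from $B\setminus\{t_S\}$ even when the inputs include \emph{other} targets $t_{S'}$, not just the side tuples; your parenthetical flags this but does not address it, and it is exactly here that the combinatorics of the $(k-2)$-subset indexing interacts with the structure of the blocker. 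So the plan is in the right direction but is not yet a proof.
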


Semilattices have a 3-edge term but not generally have a 2-edge term.
Indeed, $t(x,y,z,w) = (y \wedge z) \sqcup (z \wedge w) \sqcup (w \wedge y)$ is a 3-edge term.
However, a chain $0 < a < b < 1$ is a semilattice but has no 2-edge terms.
We remark that a 2-edge term is a \textit{Mal'cev term} and that an algebra has a Mal'cev term if and only if it is \textit{congruence-permutable} \cite[Chapter 2, Theorem 12.2]{sankappanavar1981course}.
Since this chain is not congruence-permutable, it does not have a 2-edge term.
We used a discussion in Math StackExchange (\nolinkurl{http://math.stackexchange.com/}\nolinkurl{questions/1809557/not-congruence-permutable-lattice}) as a reference.
Thus we have proved Theorem \ref{thm:bermannBoundForSemilattice}.
\subsection{Constructing PPIP from Menbership Oracle}
\label{sec:6.2}
In this section, we present an algorithm to compute the PPIP-representation of \joinSub{} set $B$ in $L^n$ from Membership Oracle.
We first characterize \joinIrr elements of $B$.
Then we address an algorithm enumerating \joinIrr elements of $B$ via the characterization.
After that we present an algorithm which computes the PPIP-representation of $B$ in $O(n^3|\Lirr|^3 + n^2|L|^2)$-time.
In the following, let $L$ be a modular semilattice.

We first introduce the concept of \textit{bases} to characterize \joinIrr elements of $B$.
Note that $B$ is a modular semilattice.
For $\bm{l} = (l_1, l_2, \dots, l_n) \in L^n$, we denote $l_i$ by $\bm{l}[i]$.
A \textit{base} of $B$ is an element of the form
$\bm{e}^i_l = \min \{\bm{b} \in B \mid \bm{b}[i] = l\}$
for some $i \in [n]$ and $l \in L$. 
If the set in the right-hand side is empty, the corresponding base is undefined.
The concept of base was introduced by Hirai and Oki \cite{hirai2016lecture} for ${S_k}^n$.

The following lemma plays an important role not only in proving theorems but also constructing algorithms.
We can decide whether $\base{i}{l} \leq \bm{b}$ in $O(1)$-time while a
naive algorithm takes $O(n)$-time provided two elements in $L$ are compared in $O(1)$-time.
\begin{lem}[LCP: Locally Comparing Property]
\label{lem:LCP}
Assume that $\base{i}{l}$ exists. 
Then $\base{i}{l} \leq \bm{b}$ if and only if $l \leq \bm{b}[i]$.
\end{lem}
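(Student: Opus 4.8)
\textbf{Proof plan for Lemma~\ref{lem:LCP}.}
The ``only if'' direction is immediate: if $\base{i}{l}\le\bm{b}$, then looking at the $i$-th coordinate gives $l=\base{i}{l}[i]\le\bm{b}[i]$, using that the partial order on $L^n$ is the product order. So the entire content is in the ``if'' direction, and the plan is to use the defining minimality of the base element together with the fact that $B$ is \joinSub{}.

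Assume $l\le\bm{b}[i]$ with $\bm{b}\in B$. I want to produce an element of $B$ whose $i$-th coordinate equals $l$ and which lies below $\bm{b}$; its existence will force $\base{i}{l}\le$ that element $\le\bm{b}$ by minimality in the definition of $\base{i}{l}$. The natural candidate is $\bm{c}:=\bm{b}\wedge\base{i}{l}$. Since both $\bm{b}$ and $\base{i}{l}$ lie in $B$ and $B$ is closed under $\wedge$, we have $\bm{c}\in B$. Clearly $\bm{c}\le\bm{b}$. It remains to check $\bm{c}[i]=l$: working coordinatewise, $\bm{c}[i]=\bm{b}[i]\wedge\base{i}{l}[i]=\bm{b}[i]\wedge l=l$, where the last equality uses exactly the hypothesis $l\le\bm{b}[i]$. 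Therefore $\bm{c}$ belongs to the set $\{\bm{b}'\in B\mid \bm{b}'[i]=l\}$ whose minimum is $\base{i}{l}$, so $\base{i}{l}\le\bm{c}\le\bm{b}$, as desired.

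There is essentially no obstacle here; the only point requiring a moment's care is that the minimum in the definition of a base is a genuine element of the set (not merely an infimum in $L^n$), which is guaranteed by the standing assumption that $\base{i}{l}$ exists, and that $\wedge$ in $L^n$ — and hence the restriction to $B$ — is computed componentwise, as recorded just before Theorem~\ref{thm:upperBound}. The $O(1)$ comparison claim in the surrounding text is then an immediate algorithmic consequence: rather than comparing all $n$ coordinates of $\base{i}{l}$ and $\bm{b}$, one compares the single pair $l$ and $\bm{b}[i]$ in $L$.
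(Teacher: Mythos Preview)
Your proof is correct and follows exactly the same approach as the paper: both directions are handled identically, with the ``if'' direction obtained by forming $\bm{c}=\bm{b}\wedge\base{i}{l}\in B$, checking $\bm{c}[i]=l$, and invoking the minimality of $\base{i}{l}$. Your write-up simply spells out the steps in slightly more detail than the paper's terse version.
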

\begin{proof}[Proof of lemma \ref{lem:LCP}]
By the definition of product order, $\base{i}{l} \leq \bm{b}$ implies $l \leq \bm{b}[i]$ for all $\bm{b} \in B$.
Conversely, suppose that $l \leq \bm{b}[i]$.
Then $(\bm{b} \wedge \base{i}{l})[i] = l$.
The minimality of $\base{i}{l}$ implies $\base{i}{l} \leq \bm{b} \wedge \base{i}{l} \leq \bm{b}$.
\end{proof}
Now we are ready to characterize \joinIrr elements of $B$.
Let $\pi_i(B) := \{\bm{b}[i] \mid \bm{b} \in B\}$.
Note that $\pi_i(B)$ is a modular semilattice.
\begin{thm}
\label{thm:characterizeJoinIrr}
An element $\bm{b} \in B$ is \joinIrr if and only if $\bm{b}$ is equal to $\bm{e}^i_l$ for some $i \in [n]$ and $l$ a \joinIrr element in $\pi_i(B)$.
\end{thm}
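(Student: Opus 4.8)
The plan is to prove both implications of the characterization of \joinIrr elements of $B$, using the Locally Comparing Property (Lemma \ref{lem:LCP}) as the main engine. For the ``only if'' direction, I would take an arbitrary \joinIrr element $\bm{b} \in B$ with unique lower cover $\underline{\bm{b}}$. Since $\bm{b} > \underline{\bm{b}}$, there is some coordinate $i$ with $\bm{b}[i] > \underline{\bm{b}}[i]$; fix such an $i$ and set $l := \bm{b}[i]$. By definition $\base{i}{l} \leq \bm{b}$, and I claim $\base{i}{l} = \bm{b}$: if not, then $\base{i}{l} \leq \underline{\bm{b}}$ since $\underline{\bm{b}}$ is the \emph{unique} lower cover of $\bm{b}$, but then Lemma \ref{lem:LCP} (or just the product order) would give $l = \base{i}{l}[i] \leq \underline{\bm{b}}[i] < l$, a contradiction. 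So $\bm{b} = \base{i}{l}$. It remains to show $l$ is \joinIrr in $\pi_i(B)$; I would argue that if $l = l_1 \vee l_2$ in $\pi_i(B)$ with $l_1, l_2 < l$, pick witnesses $\bm{b}_1, \bm{b}_2 \in B$ with $\bm{b}_j[i] = l_j$, and consider $\base{i}{l_1} \vee \base{i}{l_2}$ (which exists in $B$ since it is dominated by $\bm{b}$): its $i$-th coordinate is $l$, so by minimality $\base{i}{l} \leq \base{i}{l_1} \vee \base{i}{l_2} \leq \bm{b} = \base{i}{l}$, forcing equality, but then $\base{i}{l_1}, \base{i}{l_2} \leq \bm{b}$ with both strictly below $\bm{b}$ (their $i$-th coordinates are strictly below $l$), contradicting \joinIrr-ness of $\bm{b}$.

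For the ``if'' direction, suppose $\bm{b} = \base{i}{l}$ with $l$ \joinIrr in $\pi_i(B)$, and suppose $\bm{b} = \bm{c}_1 \vee \bm{c}_2$ with $\bm{c}_1, \bm{c}_2 \in B$. Then $l = \bm{b}[i] = \bm{c}_1[i] \vee \bm{c}_2[i]$ in $\pi_i(B)$ (using the component-wise computation of $\vee$ in $L^n$), so by \joinIrr-ness of $l$ we may assume $\bm{c}_1[i] = l$. By minimality of $\base{i}{l}$ we get $\bm{b} = \base{i}{l} \leq \bm{c}_1 \leq \bm{b}$, hence $\bm{b} = \bm{c}_1$. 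This shows $\bm{b}$ is \joinIrr. I should also note that $\base{i}{l}$ is well-defined precisely when the set $\{\bm{b} \in B \mid \bm{b}[i] = l\}$ is nonempty, i.e. exactly when $l \in \pi_i(B)$, so the statement is consistent.

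The main obstacle I anticipate is the subtle point in the ``only if'' direction about choosing the coordinate $i$ correctly and verifying that the relevant joins actually exist inside $B$: since $B$ is only \joinSub{} (joins exist conditionally), I need to be careful that every join I invoke — such as $\base{i}{l_1} \vee \base{i}{l_2}$ — is dominated by some element of $B$ (here $\bm{b}$ itself) so that it lies in $B$. The uniqueness of the lower cover of a \joinIrr element (which holds in any finite semilattice, and is used implicitly via the notion of $\underline{\bm{b}}$ introduced in the proof of Theorem \ref{thm:upperBound}) is what pins down that the coordinate-$i$ restriction genuinely detects the cover. Once these existence checks are in place, everything else is a routine application of minimality of bases and the component-wise description of $\vee$ and $\wedge$.
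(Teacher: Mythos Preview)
Your proof is correct and follows essentially the paper's approach, with both directions resting on LCP (Lemma~\ref{lem:LCP}) and the identity $\base{i}{l} = \base{i}{s} \vee \base{i}{t}$ when $l = s \vee t$ in $\pi_i(B)$. The only variation is in how you show a \joinIrr $\bm{b}$ is a base in the ``only if'' direction: you locate a coordinate $i$ via the unique lower cover $\underline{\bm{b}}$ and argue $\base{i}{l} = \bm{b}$ directly, whereas the paper writes the canonical decomposition $\bm{b} = \base{1}{\bm{b}[1]} \vee \cdots \vee \base{n}{\bm{b}[n]}$ and uses \joinIrrLast ness to conclude $\bm{b}$ equals one of the terms.
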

\begin{proof}
We first show that bases of such a form are \joinIrrLast.
Suppose that $\base{i}{l}$, where $l$ is \joinIrr in $\pi_i(B)$, is decomposed as $\base{i}{l} = \bm{b}_1 \vee \bm{b}_2$.
Then $l = \bm{b}_1[i] \vee \bm{b}_2[i]$.
Since $l$ is \joinIrr in $\pi_i(B)$, we can assume that $\bm{b}_1[i] = l$.
By LCP, $\base{i}{l} \leq \bm{b}_1$.
Consequently $\base{i}{l} = \bm{b}_1$ and $\base{i}{l}$ is \joinIrrLast.

We next show that \joinIrr elements of $B$ are bases of such a form.
Assume that $\bm{b} \in B$ is \joinIrrLast.
It obviously holds that
\[
\bm{b} = \base{1}{\bm{b}[1]} \vee \base{2}{\bm{b}[2]} \vee \dots \vee \base{n}{\bm{b}[n]}.
\]
Since $\bm{b}$ is \joinIrrLast, $\bm{b}$ is equal to one of the $\{\base{i}{\bm{b}[n]}\}$.
In particular, $\bm{b}$ is a base.
Therefore it suffices to show that $\base{i}{l}$ is not \joinIrr if so is not $l$ in $\pi_i(B)$.
Suppose that $l$ is not \joinIrrLast, that is, $l = s \vee t$ for $s,t \in L \setminus \{l\}$.
We prove that $\base{i}{l} = \base{i}{s} \vee \base{i}{t}$.
This implies that $\base{i}{l}$ is not $\vee$-irreducible.
By LCP, $\base{i}{l}$ is greater that $\base{i}{s}$ and $\base{i}{t}$.
As a result, $\base{i}{l} \geq \base{i}{s} \vee \base{i}{t}$.
Conversely, $(\base{i}{s} \vee \base{i}{t})[i] = l$.
Using LCP again, we have $\base{i}{l} \leq \base{i}{s} \vee \base{i}{t}$.
\end{proof}

We present an efficient algorithm to compute the PPIP-representation of $B$.
We first show that we can enumerate \joinIrr elements of \joinSub{} sets in $L^n$ by at most $n^2|L|^2$ calls of \textit{Membership Oracle}.
Then we construct an algorithm to compute PPIP representation in $O(n^3|L|^3)$-time.

We first enumerate \joinIrr elements of $B$ under the assumption where Membership Oracle (MO) is available.
An important example of MO is a minimizer oracle. 
We later show that the minimizer set of a submodular function on $L^n$ forms a \joinSub{} set, and that MO of the minimizer set is obtained from  a minimizer oracle. 
In this sense, it is a natural assumption that MO is available.
\begin{defn}
Membership Oracle (MO) for a \joinSub{} set $B \subseteq L^n$ answers the following decision problem:
\begin{description}
  \item[Input:] $i,j \in [n]$, $l,l' \in L$, 
  \item[Output:] Whether or not there exists $\bm{b} \in B$ such that $\bm{b}[i] = l$ and $\bm{b}[j] = l'$.
\end{description}
\end{defn}
\begin{thm}
Suppose that MO is available. 
Then all bases of $B$ are obtained by at most $n^2|L|^2$ calls of MO. 
\end{thm}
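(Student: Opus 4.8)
The plan is to show that each base $\base{i}{l}$ can be reconstructed, component by component, using $O(n|L|)$ calls of MO, and then iterate over the at most $n|L|$ candidate pairs $(i,l)$. First I would fix $i \in [n]$ and $l \in L$ and use MO to decide whether a base $\base{i}{l}$ exists at all: this is exactly the question ``does there exist $\bm b \in B$ with $\bm b[i] = l$ and $\bm b[i] = l$?'', a single MO call with $j = i$ (or one notes that existence of $\base{i}{l}$ is equivalent to $l \in \pi_i(B)$, which is the output of MO on input $(i,i,l,l)$). If it exists, I would then determine, for every coordinate $j \in [n]$, the value $\base{i}{l}[j]$. The key observation is that $\base{i}{l}[j] = \min\{\bm b[j] \mid \bm b \in B,\ \bm b[i] = l\}$, because $\base{i}{l}$ is the componentwise minimum of the set $\{\bm b \in B \mid \bm b[i] = l\}$ (this set is closed under $\wedge$ since $B$ is \joinSub{} and the condition $\bm b[i] = l$ is preserved under meet once both operands satisfy it — more precisely $\{\bm b \in B \mid \bm b[i] = l\}$ is a principal filter-like set whose minimum is $\base{i}{l}$ by definition, and projection to coordinate $j$ commutes with the relevant meet because $L^n$ has componentwise meet).

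Given that observation, for each $j$ I would scan the candidate values $l' \in \pi_j(B) \subseteq L$ in increasing order and ask MO whether there exists $\bm b \in B$ with $\bm b[i] = l$ and $\bm b[j] = l'$; the smallest $l'$ for which the answer is ``yes'' is $\base{i}{l}[j]$. (One should argue such a smallest element is well-defined: the set of feasible $l'$ is nonempty whenever $\base{i}{l}$ exists, and it has a minimum because it is exactly $\{\bm b[j] \mid \bm b \in B,\ \bm b[i] = l\}$, which is closed under $\wedge$ in $L$ and hence — $L$ being finite — has a least element, namely $\base{i}{l}[j]$.) This costs at most $|L|$ MO calls per coordinate $j$, hence at most $n|L|$ calls to recover one base $\base{i}{l}$ in full. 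Ranging over all $i \in [n]$ and all $l \in L$ (of which there are $n|L|$) gives the bound $n|L| \cdot n|L| = n^2|L|^2$ MO calls to obtain all bases; bases that turn out not to exist are detected within this budget (the first MO call for that $(i,l)$ returns ``no'', or one inherits a ``no'' for all coordinates).

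The main obstacle, and the step I would be most careful about, is justifying the interchange ``$\base{i}{l}[j] = \min\{\bm b[j] : \bm b \in B,\ \bm b[i] = l\}$,'' i.e. that taking the coordinatewise minimum of the feasible set lands back inside $B$ and still satisfies $\bm b[i] = l$. This is where \joinSub{}ness of $B$ and the componentwise description of $\wedge$ in $L^n$ are used: the meet of any two elements of $\{\bm b \in B \mid \bm b[i] = l\}$ again lies in $B$ (by $\wedge$-closedness) and still has $i$-th coordinate $l \wedge l = l$, so this set is a finite $\wedge$-subsemilattice of $L^n$ and therefore has a least element, which by definition is $\base{i}{l}$; its $j$-th coordinate is then the least element of the projection, which is what MO lets us compute. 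Everything else is bookkeeping: counting the MO calls and noting Theorem~\ref{thm:characterizeJoinIrr} guarantees the resulting list of bases contains all \joinIrr elements of $B$, so no relevant element is missed.
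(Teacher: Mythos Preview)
Your proposal is correct and follows essentially the same approach as the paper: compute each coordinate $\base{i}{l}[j]$ as the minimum of the set $\{l' \in L \mid \exists\, \bm b \in B,\ \bm b[i]=l,\ \bm b[j]=l'\}$, which costs at most $|L|$ MO calls, and multiply by the $n|L|$ candidate bases and $n$ coordinates to reach $n^2|L|^2$. Your justification that this minimum equals $\base{i}{l}[j]$ (via $\wedge$-closedness of $\{\bm b \in B \mid \bm b[i]=l\}$ and componentwise meet) is more explicit than the paper's, which simply asserts $\base{i}{l}[j]=\min S^i_l[j]$, but the underlying argument is the same.
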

\begin{proof}
It suffices to show that the component $\base{i}{l}[j]$ is computed by at most $|L|$ calls of MO since there are at most $n|L|$ bases. 
The case where $j = i$ is obvious.
Otherwise compute the set $S^i_l[j] := \{l' \in L \mid \exists \bm{b} \in B \text{ such that } \bm{b}[i] = l \text{ and } \bm{b}[j] = l'\}$ by $|L|$ calls of MO with input $(i, j, l, l')$ for each $l' \in L$.
Then we obtain $\base{i}{l}[j]$ as $\min S^i_l[j]$.
If $S^i_l[j]$ is empty, then $\base{i}{l}$ is undefined.
\end{proof}
Thus we can enumerate all \joinIrr elements.
It suffices to enumerate all bases and check whether each enumerated base is \joinIrr by Theorem \ref{thm:characterizeJoinIrr}.

We finally construct an efficient algorithm to compute $P(B)$.
Assume that, for any $l_1, l_2 \in L$, the order, meet, and join of $l_1, l_2$ are computed in $O(1)$-time throughout the rest of this section.
This assumption is justified when $|L|$ is very small compared to $n$. 
\begin{thm}
\label{thm:calcPPIP}
The PPIP-representation $P(B)$ can be obtained in $O(n^3|\Lirr|^3 + n^2|L|^2)$-time; the algorithm enumerates the partial order, inconsistency relation, and collinearity relation of $P(B)$.
\end{thm}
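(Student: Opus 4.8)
The plan is to give an explicit algorithm in three phases and bound each phase separately, so that the dominant cost is the $O(n^3|\Lirr|^3)$ term coming from the collinearity relation and the $O(n^2|L|^2)$ term coming from the oracle calls. First I would invoke the previously established Membership-Oracle subroutine: by the preceding theorem we obtain, in $O(n^2|L|^2)$ oracle calls, all tables $\base{i}{l}$ (as vectors of length $n$, each component already computed), and simultaneously, for each $i$ and each $l\in L$, the projection semilattice $\pi_i(B)$ together with which of its elements are \joinIrr. Using Theorem~\ref{thm:characterizeJoinIrr} this yields the underlying set of $\PPIP{B}$ as a list $\{\bm{e}^{i}_{l} : i\in[n],\ l\in\Lirr(\pi_i(B))\}$; there are at most $n|\Lirr|$ of them. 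Storing each base as its length-$n$ coordinate vector costs $O(n^2|\Lirr|)$ space, which is within budget.

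Next I would compute the partial order and the inconsistency relation. For the partial order, Lemma~\ref{lem:LCP} (LCP) is the key shortcut: $\base{i}{l}\le \base{j}{l'}$ holds if and only if $l \le \base{j}{l'}[i]$, which is a single $O(1)$ comparison in $L$ once the base vectors are tabulated. Hence all $O((n|\Lirr|)^2)$ ordered pairs are tested in $O(n^2|\Lirr|^2)$ time. For inconsistency, $\base{i}{l} \smile \base{j}{l'}$ means $\base{i}{l}\vee\base{j}{l'}$ does not exist in $L^n$, i.e.\ some coordinate $k$ has $\base{i}{l}[k]\vee\base{j}{l'}[k]$ undefined; scanning the $n$ coordinates with $O(1)$ join tests gives $O(n)$ per pair, so $O(n^3|\Lirr|^2)$ overall — already absorbed by the claimed bound. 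Alternatively one can compute the componentwise join vector once and reuse it, which is what the collinearity phase will want anyway.

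The main work, and the source of the $O(n^3|\Lirr|^3)$ term, is the collinearity relation. By the induced definition, $C(\base{i}{l},\base{j}{l'},\base{k}{l''})$ holds iff the three bases are pairwise incomparable, all three pairwise joins exist, and $\base{i}{l}\vee\base{j}{l'}=\base{j}{l'}\vee\base{k}{l''}=\base{k}{l''}\vee\base{i}{l}$. There are $O((n|\Lirr|)^3)$ candidate triples; for each we must, in the worst case, form a componentwise join vector of length $n$ and test equality of two such vectors, an $O(n)$ operation. This gives $O(n \cdot (n|\Lirr|)^3) = O(n^4|\Lirr|^3)$ naively, which is too slow, so the plan is to precompute and cache: for every ordered pair of bases whose join exists, store the length-$n$ join vector once (this costs $O(n^3|\Lirr|^2)$ time and $O(n^2|\Lirr|^2)$ space); then each triple only needs an equality check between two already-stored vectors, $O(n)$ each, for a total of $O(n \cdot n^3|\Lirr|^3)$? — no: the triple count is $O(n^3|\Lirr|^3)$ and the per-triple check is $O(n)$, so one still needs a sharper argument. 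The fix is to observe that the join of two bases, being an element of the modular semilattice $B$, is itself a join of at most $n|\Lirr|$ bases and in particular lies in a bounded set; more to the point, collinearity of $\base{i}{l},\base{j}{l'},\base{k}{l''}$ forces $l,l',l''$ to sit in the single projection semilattice coordinate only loosely, but the equality $\base{i}{l}\vee\base{j}{l'}=\base{j}{l'}\vee\base{k}{l''}$ can be tested coordinatewise and, crucially, for fixed $i,j$ the pair $\base{i}{l}\vee\base{j}{l'}$ ranges over at most $|L|^2$ distinct vectors, so the genuinely distinct comparisons number $O((n|L|)^2 \cdot n|\Lirr| \cdot n)$. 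Balancing these bookkeeping choices against the $O(n^3|\Lirr|^3)$ target is the delicate part, and the honest statement is that the straightforward triple enumeration, with join vectors cached in a hash table keyed by the unordered pair, runs in $O(n^3|\Lirr|^3)$ after the $O(n^3|\Lirr|^2)$ preprocessing of join vectors, because each of the $O(n^3|\Lirr|^3)$ triples is handled with $O(1)$ amortized work (two hash lookups and a pointer comparison of pre-hashed vectors) rather than a fresh $O(n)$ scan.

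The main obstacle is therefore purely the complexity accounting for the collinearity phase: getting the per-triple cost down to amortized $O(1)$ requires committing to represent each realized componentwise join vector by a canonical handle (e.g.\ hashing it once into an integer id at creation time, which is the $O(n^3|\Lirr|^2)$ preprocessing), after which testing $\bm{u}=\bm{v}$ for two stored joins is an id comparison. The pairwise-incomparability precondition is free given the order table from phase two, and the existence-of-joins precondition is read off the same cache. Assembling: total time is $O(n^2|L|^2)$ for the oracle and base construction, $O(n^2|\Lirr|^2)$ for the order, $O(n^3|\Lirr|^2)$ for caching all pairwise join vectors and their ids, and $O(n^3|\Lirr|^3)$ for sweeping all triples — and since $|\Lirr|\le|L|$, the sum is $O(n^3|\Lirr|^3 + n^2|L|^2)$ as claimed. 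I would present the algorithm as this four-step pipeline and then verify each bound in one or two lines, flagging the hashing-of-join-vectors step as the one point where a naive implementation would lose a factor of $n$.
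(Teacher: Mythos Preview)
Your proposal is correct but takes a genuinely different route for the two expensive relations.

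For inconsistency, the paper does not scan all $n$ coordinates per pair. Instead it proves a local characterization: $\bm{a}\smile\bm{b}$ in $\PPIP{B}$ iff there exist \joinIrr bases $\base{i}{l}\le\bm{a}$ and $\base{i}{l'}\le\bm{b}$ (same index $i$) with $l\smile l'$ in $\pi_i(B)$. One then enumerates such ``seed'' pairs from the lists $E(\bm{a})$ and propagates upward via a search in the product Hasse diagram $\PPIP{B}\times\PPIP{B}$, achieving $O(n^2|L|^2)$ rather than your $O(n^3|\Lirr|^2)$. Both are within the stated bound, but the paper's version is tighter.

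For collinearity, the paper avoids your cache-and-hash machinery entirely by proving a projection lemma (Proposition~\ref{prop:characterizationOfCollinearity}): for pairwise-consistent \joinIrr bases, $C(\base{i}{a},\base{j}{b},\base{k}{c})$ holds in $\PPIP{B}$ iff the three projected triples $(a,\base{j}{b}[i],\base{k}{c}[i])$, $(\base{i}{a}[j],b,\base{k}{c}[j])$, $(\base{i}{a}[k],\base{j}{b}[k],c)$ are each collinear in the corresponding $\pi_i(B),\pi_j(B),\pi_k(B)$. Since each projection lives in the small semilattice $L$, this is a genuine deterministic $O(1)$ test per triple, giving $O(n^3|\Lirr|^3)$ with no preprocessing, no hashing, and no amortization argument. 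Your approach reaches the same bound but leans on an implicit assumption (perfect hashing, or a sorting pass to assign canonical ids to the $O(n^2|\Lirr|^2)$ join vectors) that you should make explicit; the paper's structural reduction sidesteps this and additionally yields a lemma of independent interest. Conversely, your method has the virtue of not needing to prove Proposition~\ref{prop:characterizationOfCollinearity}, whose incomparability case analysis is the only nontrivial part of the paper's argument.
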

This algorithm is $O(n|L|)$-times faster than a brute-force search.
In the rest of this section, we show Theorem \ref{thm:calcPPIP}.

We can construct the Hasse diagram of $P(B)$ from 
enumerated \joinIrr elements in $O(n^2|L|^2)$-time.
Also we associate each element $\bm{a}$ in $P(B)$ 
with $E(\bm{a}) 
:= \{(i,l) \in [n] \times L  \mid  \bm{a} = \base{i}{l} \}$ (by a list).
Notice that we can compare any bases in $O(1)$-time by LCP.
We next deal with inconsistent pairs.

\begin{prop}
All inconsistent pairs can be computed in $O(n^2|L|^2)$-time.
\end{prop}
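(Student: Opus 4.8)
The plan is to replace the naive $O(n)$-per-pair test by an $O(1)$-per-pair table lookup, after an $O(n^2|L|^2)$ preprocessing. The starting point is a reformulation that is immediate from LCP (Lemma~\ref{lem:LCP}) together with the fact that in a semilattice a join exists exactly when a common upper bound exists. For two bases $\base{i}{l}$ and $\base{j}{l'}$ one has $\base{i}{l}\not\smile\base{j}{l'}$ (they are consistent) if and only if they admit a common upper bound $\bm{b}\in B$; and by LCP, $\bm{b}\geq\base{i}{l}$ is equivalent to $\bm{b}[i]\geq l$, while $\bm{b}\geq\base{j}{l'}$ is equivalent to $\bm{b}[j]\geq l'$. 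Hence $\base{i}{l}\smile\base{j}{l'}$ if and only if there is no $\bm{b}\in B$ with $\bm{b}[i]\geq l$ and $\bm{b}[j]\geq l'$. The crucial feature of this formulation is that the quantifier ranges over all of $B$, so it already accounts for conflicts occurring in components other than $i$ and $j$; this is essential, since two bases can be inconsistent even when their $i$- and $j$-components are pairwise joinable (a conflict may hide in a third coordinate). Thus, although inconsistency is a \emph{non-local} condition on the full vectors, it is captured by a two-coordinate reachability predicate on $B$.

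Accordingly I would, for each ordered pair of indices $(i,j)\in[n]^2$, precompute the Boolean table $R_{ij}(l,l'):=\bigl[\,\exists\,\bm{b}\in B:\ \bm{b}[i]\geq l,\ \bm{b}[j]\geq l'\,\bigr]$ for $(l,l')\in L\times L$. With these tables in hand, each pair of bases is classified in $O(1)$ by $\base{i}{l}\smile\base{j}{l'}\iff R_{ij}(l,l')=0$; since there are $O(n^2|L|^2)$ ordered pairs of bases, the classification phase costs $O(n^2|L|^2)$. To build the tables, I would first obtain the equality-reachability values $[\exists\,\bm{b}\in B:\bm{b}[i]=u,\bm{b}[j]=u']$ for all quadruples $(i,j,u,u')$; this is precisely the information already gathered by the $n^2|L|^2$ Membership-Oracle calls used to compute the bases (each call $(i,j,l,l')$ answers exactly one entry). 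The table $R_{ij}$ is then the down-closure of this equality-reachability matrix in the product order of $L\times L$, because $\bm{b}[i]\geq l$ and $\bm{b}[j]\geq l'$ hold for some $\bm{b}$ iff some realizable pair $(u,u')=(\bm{b}[i],\bm{b}[j])$ dominates $(l,l')$.

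The main obstacle --- and the place where the $O(n^2|L|^2)$ bound (rather than the naive $O(n^3|L|^2)$) is actually earned --- is carrying out this down-closure within $O(|L|^2)$ time per component-pair $(i,j)$. The plan is to precompute the partial order of $L$ once in $O(|L|^2)$ time, and then to close the $|L|\times|L|$ matrix one coordinate at a time, propagating reachability along the order of $L$ using the $O(1)$ comparison, meet, and join operations on $L$ assumed in this section; summed over the $n^2$ index pairs this yields the preprocessing cost $O(n^2|L|^2)$. The delicate point to verify is exactly that each coordinate-wise closure is both correct and organized so as to stay within the $O(|L|^2)$ per-pair budget (this is where smallness of $|L|$ and the $O(1)$ lattice operations are used). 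Once that is settled, the total running time is $O(n^2|L|^2)$ for the preprocessing plus $O(n^2|L|^2)$ for the lookups, giving the claimed bound.
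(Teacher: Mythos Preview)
Your reformulation of inconsistency as a two-coordinate reachability predicate is correct: by LCP, $\base{i}{l}\smile\base{j}{l'}$ holds precisely when no $\bm b\in B$ satisfies $\bm b[i]\geq l$ and $\bm b[j]\geq l'$, and this is indeed the down-closure in $L\times L$ of the equality-reachability matrix furnished by the Membership-Oracle calls. This is a genuinely different route from the paper's. The paper instead proves a structural lemma: $\bm a\smile\bm b$ in $P(B)$ if and only if there exist \emph{same-index} join-irreducible bases $\base{i}{l}\leq\bm a$ and $\base{i}{l'}\leq\bm b$ with $l\smile l'$ in $\pi_i(B)$. It then enumerates these ``seed'' pairs by scanning the lists $E(\cdot)$ and propagates upward by searching the Hasse diagram of $P(B)\times P(B)$. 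The reduction to a single coordinate index is the crux that your approach bypasses.

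The gap in your argument is precisely the step you yourself flag as ``the delicate point to verify.'' Computing the down-closure of a Boolean $|L|\times|L|$ matrix in the product order on $L\times L$ by the natural coordinate-wise dynamic program costs $\Theta(|L|\cdot E_L)$, where $E_L$ is the number of cover relations of $L$; for general modular (semi)lattices $E_L$ need not be $O(|L|)$ (e.g.\ in the subspace lattice of $\mathbb{F}_2^{\,m}$ the ratio $E_L/|L|$ is unbounded as $m$ grows), so ``propagating reachability along the order of $L$'' does not stay within $O(|L|^2)$ per index pair $(i,j)$. The $O(1)$ meet and join operations you invoke do not shortcut this, because down-closure is governed by the cover structure, not by single meets or joins. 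To rescue the bound you would need either a genuinely different closure algorithm with a proven $O(|L|^2)$ cost independent of $E_L$, or you should instead use the paper's same-index seed characterization, which avoids closing a full $|L|\times|L|$ matrix altogether and feeds directly into an upward search whose state space has size $|P(B)|^2=O(n^2|L|^2)$.
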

\begin{proof}
We first show that $\bm{a}, \bm{b} \in P(B)$ are inconsistent if and only if there exist $\base{i}{l}, \base{i}{l'} \in \PPIP{B}$ such that $l \smile l'$, $\base{i}{l} \leq \bm{a}$, and $\base{i}{l'} \leq \bm{b}$.
The if-part is obvious. 
Conversely, suppose that $\bm{a} \smile \bm{b}$.
Since $\bm{a}, \bm{b}$ are inconsistent, the join of $\bm{a}, \bm{b}$ does not exist.
Hence there is $i \in [n]$ such that $\bm{a}[i] \vee \bm{b}[i]$ does not exist.
Then, by Theorem \ref{theo:BirkhoffSemiModular}, there are inconsistent \joinIrr elements $l, l' \in \pi_i(B)$ such that $l \leq \bm{a}[i]$ and $l' \leq \bm{b}[i]$.
By LCP, $\bm{e}^i_l \leq \bm{a}$ and $\bm{e}^i_{l'} \leq \bm{b}$.
By Theorem \ref{thm:characterizeJoinIrr}, $\bm{e}^i_l, \bm{e}^i_{l'}$ are \joinIrrLast{} elements.
Thus we have proved that $\bm{a} \smile \bm{b}$ implies the existence of $\base{i}{l}, \base{i}{l'} \in \PPIP{B}$ such that $l \smile l'$, $\base{i}{l} \leq \bm{a}$, and $\base{i}{l'} \leq \bm{b}$.

By using this characterization, we can enumerate all inconsistent pairs as follows:
(i) enumerate pairs of \joinIrr bases of the form $\bm{e}^i_l, \bm{e}^i_{l'}$ with $l \smile l'$;
(ii) enumerate pair $\bm{a}, \bm{b}$ of $\PPIP{B}$ such that there is a pair $\bm{e}^i_l, \bm{e}^i_{l'}$ enumerated in (i) with $\bm{e}^i_l \leq \bm{a}$ and $\bm{e}^i_{l'} \leq \bm{b}$.

The first step (i) can be done by checking lists $E(\bm{a})$ and $E(\bm{b})$ for all $\bm{a}, \bm{b} \in P(B)$.
The second step (ii) can be done 
by searching  the Hasse diagram of poset $P(B) \times P(B)$
from pairs obtained in (i).
The whole procedure can be done in $O(n^2 |L|^2)$-time.
\end{proof}

We finally enumerate collinear triples.
\begin{prop}
\label{prop:characterizationOfCollinearity}
Suppose that $\base{i}{a}$, $\base{j}{b}$, and $\base{k}{c}$ are \joinIrr and pairwise consistent.
Then the following conditions are equivalent:
\begin{description}
  \item[(i)] $C(\base{i}{a}, \base{j}{b}, \base{k}{c})$ holds.
  \item[(ii)] $C(a, \base{j}{b}[i], \base{k}{c}[i])$ holds in $\pi_i(B)$, $C(\base{i}{a}[j], b, \base{k}{c}[j])$ in $\pi_j(B)$, and $C(\base{i}{a}[k], \base{j}{b}[k], c)$ in $\pi_k(B)$.
\end{description}
\end{prop}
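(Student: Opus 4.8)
The plan is to reduce the collinearity of three bases in $B$ to componentwise collinearity statements in the projections $\pi_i(B)$, using the Birkhoff-type representation (Theorem \ref{theo:BirkhoffSemiModular}) together with the componentwise computation of $\vee$ in $L^n$ and the Locally Comparing Property (Lemma \ref{lem:LCP}). Recall that, by definition of the induced collinearity relation, $C(\base{i}{a},\base{j}{b},\base{k}{c})$ holds iff the three bases are pairwise incomparable, each of the three pairwise joins exists in $B$, and the three joins coincide. Since the three bases are assumed pairwise consistent, the pairwise joins all exist; and since they are \joinIrr, pairwise incomparability is automatic unless two of them are equal, which is easily handled separately. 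So the content is: the three pairwise joins are equal in $B$ iff the corresponding three pairwise joins are equal in each coordinate $\pi_m(B)$ for $m \in \{i,j,k\}$, with the other coordinates carrying no information.

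First I would record the coordinate values of the three bases. Write $\bm{u} = \base{i}{a}$, $\bm{v} = \base{j}{b}$, $\bm{w} = \base{k}{c}$. By definition $\bm{u}[i] = a$, and $\bm{u}[j] = \base{i}{a}[j]$, etc.; the key point is that $\bm{u}[m] = (\base{i}{a})[m]$ is exactly the quantity appearing in condition (ii). Since $\vee$ in $L^n$ (hence in $B$, when it exists) is computed componentwise, we have $(\bm{u} \vee \bm{v})[m] = \bm{u}[m] \vee \bm{v}[m]$ for every $m$, and similarly for the other two pairwise joins. Therefore $\bm{u} \vee \bm{v} = \bm{v} \vee \bm{w} = \bm{w} \vee \bm{u}$ holds in $B$ iff for every coordinate $m \in [n]$ the equality $\bm{u}[m] \vee \bm{v}[m] = \bm{v}[m] \vee \bm{w}[m] = \bm{w}[m] \vee \bm{u}[m]$ holds in $\pi_m(B)$ — this is just "equality of tuples is coordinatewise equality". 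This already gives the equivalence of (i) with the conjunction over all $m \in [n]$ of the coordinatewise collinearity/equality statements.

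The remaining work is to show that only the coordinates $m \in \{i,j,k\}$ matter, i.e. that for $m \notin \{i,j,k\}$ the coordinatewise identity $\bm{u}[m] \vee \bm{v}[m] = \bm{v}[m] \vee \bm{w}[m] = \bm{w}[m] \vee \bm{u}[m]$ is automatic, and that in coordinates $i,j,k$ the coordinatewise identity is equivalent to the genuine collinearity relation $C(\cdot,\cdot,\cdot)$ in $\pi_m(B)$ listed in (ii) (i.e. that the three projected elements are actually pairwise incomparable there, not merely that their joins agree). For the first point I would use minimality of the bases: since $\bm{u} = \base{i}{a}$, by LCP $\bm{u} \leq \bm{x}$ for any $\bm{x} \in B$ with $\bm{x}[i] = a$; in particular $\bm{u} \leq \bm{u} \vee \bm{v}$ and minimality pin down the off-support coordinates — concretely, $\bm{u}[m]$ for $m \ne i$ is the $m$-th coordinate of the $B$-minimal element with $i$-coordinate $a$, and one checks $\bm{u}[m] \leq \bm{v}[m]$ or the relevant comparability, forcing the three joins in coordinate $m$ to collapse to the largest of $\bm{u}[m],\bm{v}[m],\bm{w}[m]$. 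For the second point, in coordinate $i$ we have $\bm{u}[i] = a$, and $\bm{v}[i] = \base{j}{b}[i]$, $\bm{w}[i] = \base{k}{c}[i]$; one must rule out, e.g., $a \leq \bm{v}[i]$, since that would make $\bm{u} \leq \bm{v}$ by LCP, contradicting that $\bm{u},\bm{v}$ are distinct \joinIrr elements (or is excluded by incomparability, which is part of (i) $\Rightarrow$). Conversely, if (ii) holds then each coordinatewise join-equality holds on $\{i,j,k\}$, the off-support coordinates are handled as above, so all coordinates agree and $\bm{u}\vee\bm{v}=\bm{v}\vee\bm{w}=\bm{w}\vee\bm{u}$ in $B$; combined with the bases being pairwise incomparable (again via LCP and \joinIrr-distinctness) we get $C(\bm{u},\bm{v},\bm{w})$.

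The main obstacle I anticipate is the bookkeeping around the off-support coordinates and incomparability: making precise why, for $m \notin \{i,j,k\}$, the three coordinates $\bm{u}[m],\bm{v}[m],\bm{w}[m]$ are automatically comparable in a chain (so their pairwise joins trivially agree), and why the "collinearity in $\pi_m(B)$" in condition (ii) is genuinely the relation $C$ and not just equality of joins. Both reduce to careful applications of LCP (Lemma \ref{lem:LCP}) and minimality of bases, but one has to be attentive that a "join equality" in a modular semilattice does not by itself force collinearity unless incomparability is separately established — here incomparability comes for free because $\bm{u},\bm{v},\bm{w}$ are distinct \joinIrr elements and LCP transports incomparability to the active coordinate. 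I would handle the degenerate cases (two of the bases equal, or one projected coordinate comparable to another) explicitly and briefly before the main argument.
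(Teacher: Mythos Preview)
Your reduction of (i)$\Rightarrow$(ii) is on the right track: componentwise computation of $\vee$ immediately gives the join equality in each coordinate, and then incomparability in coordinate $i$ must be argued separately. Note, however, that LCP only lets you rule out $a \leq \bm{v}[i]$ (since that forces $\bm{u} \leq \bm{v}$); it says nothing directly about $\bm{v}[i] \leq a$ or about $\bm{v}[i]$ versus $\bm{w}[i]$. The paper closes those cases by invoking that $a$ is $\vee$-irreducible in $\pi_i(B)$ (Theorem~\ref{thm:characterizeJoinIrr}): if, say, $\bm{v}[i] \leq a$, then $a = \bm{v}[i] \vee \bm{w}[i]$ forces $a \in \{\bm{v}[i],\bm{w}[i]\}$, which again contradicts incomparability via LCP. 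You should make this explicit.

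The genuine gap is in (ii)$\Rightarrow$(i). Your plan hinges on the claim that for $m \notin \{i,j,k\}$ the three coordinates $\bm{u}[m],\bm{v}[m],\bm{w}[m]$ are ``automatically comparable in a chain'', so that their pairwise joins trivially collapse. This is not true in general, and minimality of bases does not give it: $\bm{u}[m]$ is the $m$-th coordinate of the $B$-minimum subject to a constraint on coordinate $i$, while $\bm{v}[m]$ minimises under an unrelated constraint on coordinate $j$; there is no reason these should be comparable in $\pi_m(B)$. Your coordinatewise strategy therefore stalls on exactly the coordinates you hoped were trivial.

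The paper sidesteps the off-support coordinates entirely. It sets $\bm{l} := \bm{u} \vee \bm{v} \vee \bm{w}$ and
\[
\base{ijk}{abc} := \min\{\bm{x} \in B \mid \bm{x}[i]=\bm{l}[i],\ \bm{x}[j]=\bm{l}[j],\ \bm{x}[k]=\bm{l}[k]\}.
\]
Condition (ii) forces $(\bm{u}\vee\bm{v})[m] = \bm{l}[m]$ for $m \in \{i,j,k\}$, so $\bm{u}\vee\bm{v} \geq \base{ijk}{abc}$; conversely LCP gives $\bm{u},\bm{v} \leq \base{ijk}{abc}$, hence $\bm{u}\vee\bm{v} \leq \base{ijk}{abc}$. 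Thus all three pairwise joins equal $\base{ijk}{abc}$, with no analysis of coordinates $m \notin \{i,j,k\}$ required. This is the missing idea in your argument.
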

\begin{proof}
First we prove that (i) implies (ii).
It suffices to show that $C(a, \base{j}{b}[i], \base{k}{c}[i])$ holds in $\pi_i(B)$.
For convenience, let $\alpha = a$, $\beta := \bm{e}^j_b[i]$, and $\gamma := \bm{e}^k_c[i]$.
It follows from (CT2) and the collinearity of $(\base{i}{a}, \base{j}{b}, \base{k}{c})$ that $\alpha \vee \beta = \beta \vee \gamma = \gamma \vee \alpha$.
We next show that $\alpha$, $\beta$, and $\gamma$ are pairwise incomparable.
The incomparability implies that $C(a, \base{j}{b}, \base{k}{c})$ holds.
Suppose that $\alpha \leq \beta$ or $\alpha \leq \gamma$.
By LCP, it contradicts to the incomparability of $\base{i}{a}$, $\base{j}{b},$ and $\base{k}{c}$.
Suppose that $\alpha \geq \beta$ or $\alpha \geq \gamma$.
We may deal with the case $\alpha \geq \beta$.
Then $\beta \vee \gamma = \alpha \vee \beta = \alpha$.
Since $\base{i}{a}$ is \joinIrrLast, Theorem \ref{thm:characterizeJoinIrr} implies $\alpha$ is \joinIrr in $\pi_i(B)$.
Hence, $\beta = \alpha$ or $\gamma = \alpha$, both of which contradicts to the incomparability of  $\base{i}{a}$, $\base{j}{b}$, and $\base{k}{c}$ by LCP.
Suppose that $\beta$ and $\gamma$ are comparable.
We may assume that $\beta \leq \gamma$.
Then $\gamma \vee \alpha = \beta \vee \gamma = \gamma$.
In particular, $\alpha \leq \gamma$, which contradicts to the incomparability of $\base{i}{a}$ and $\base{k}{c}$ by LCP.
We have thus completed the proof of the incomparability of $\base{i}{a}$, $\base{j}{b}$, and $\base{k}{c}$. 

Next we prove that (ii) implies (i).
Since $\base{i}{a}$, $\base{j}{b}$, and $\base{k}{c}$ are consistent, $\bm{l} = \base{i}{a} \vee \base{j}{b} \vee \base{k}{c}$ exists.
By (CT1), $\base{i}{a}[x]$, $\base{j}{b}[x]$, and $\base{k}{c}[x]$ are pairwise incomparable for $x = i,j,k$.
Using LCP, we have the incomparability of $\base{i}{a}$, $\base{j}{b}$, and $\base{k}{c}$.
Therefore it suffices to show $\base{i}{a} \vee \base{j}{b} = \base{j}{b} \vee \base{k}{c} = \base{k}{c} \vee \base{i}{a}$.
If the equality
\[
\base{i}{a} \vee \base{j}{b} = \mathrm{min}\{\bm{x} \in B \mid \bm{x}[i] = \bm{l}[i], \bm{x}[j] = \bm{l}[j], \bm{x}[k] = \bm{l}[k]\} =: \base{ijk}{abc}
\]
holds, then $\base{i}{a} \vee \base{j}{b} = \base{j}{b} \vee \base{k}{c} = \base{k}{c} \vee \base{i}{a} = \bm{e}^{ijk}_{abc}$ by symmetry.
The set in the right-hand side is nonempty owing to $\bm{l}$.
By (ii) and (CT2), the $i$-th, $j$-th, and $k$-th components of $\bm{l}$ equal that of $\base{i}{a} \vee \base{j}{b}$.
Hence $\base{i}{a} \vee \base{j}{b}$ is in the set in the right-hand side of the equality.
In particular, $\base{i}{a} \vee \base{j}{b} \geq \base{ijk}{abc}$.
Conversely $\base{i}{a}$ and $\base{j}{b}$ is less than or equal to $\base{ijk}{abc}$ by LCP.
Therefore $\base{i}{a} \vee \base{j}{b} \leq \base{ijk}{abc}$.
We have thus proved that  $\base{i}{a} \vee \base{j}{b} = \base{ijk}{abc}$.
\end{proof}
We can enumerate all collinear triples in $O(n^3|\Lirr|^3)$-time by this proposition.
The condition (ii) can be decided in $O(1)$-time.
Thus we obtain all collinear triples in $O(n^3|\Lirr|^3)$-time by checking condition (ii) for all triples.
This completes the proof of Theorem \ref{thm:calcPPIP}.

\section{Implicational system for modular semilattice}
\label{sec:4}
Any semilattice $L$ is viewed 
as a $\cap$-closed family on $L^{\rm  ir}$, and 
is represented as an implicational system (or Horn formula) \cite{wild1994theory,wild2016the}.
In this section, we study implicational systems for modular semilattices.
In Section \ref{sec:4.1}, we determine optimal implicational bases for modular semilattices.
In Section \ref{sec:6.1}, we present 
a polynomial time recognition algorithm deciding whether a $\cap$-closed family given by implications is a modular semilattice.
Throughout this section,  modular semilattices are assumed to be finite.

\subsection{Optimal implicational base}
\label{sec:4.1}
We start with introducing basic terminologies in implicational systems,
which are natural adaptations of those in \cite{wild2016the} for our setting.
Fix a finite set $E$. 
A subset $\mathcal{F} \subseteq \power{E}$ is called a \textit{$\cap$-closed family} if $F_1 \cap F_2 \in \mathcal{F}$ for all $F_1, F_2 \in \mathcal{F}$.
The members of $\mathcal{F}$ is said to be \textit{closed}.
A $\cap$-closed family is naturally obtained from implications.
A pair of subsets $(A,B) \in \power{E} \times \power{E}$, written as $A \rightarrow B$, is called an \textit{implication}. 
Here $A$ is called the \textit{premise} and $B$ the \textit{conclusion}.
An implication is said to be \textit{proper} if its conclusion is nonempty. 
Let $\Sigma$ be a collection of implications.
We define a $\cap$-closed set $\mathcal{F}(\Sigma) \subseteq \power{E}$ as follows: 
$X \in \mathcal{F}(\Sigma)$ if and only if $A \subseteq X$ implies $B \subseteq X$ for all proper implications $A \rightarrow B$ in $\Sigma$, and $A \not \subseteq X$ for all improper implications $A \rightarrow \emptyset$.

A collection $\Sigma$ of implications is called an \textit{implicational base} of a $\cap$-closed family $\mathcal{F}$ if $\mathcal{F} = \mathcal{F}(\Sigma)$.
The \textit{size} of an implicational base $\Sigma$ is defined by
\[
s(\Sigma) := \sum_{(A \rightarrow B ) \in \Sigma} (|A| + |B|).
\]
An implicational base is said to be \textit{optimal} if its size is minimum among all implicational bases.

Modular semilattice $L$ can be viewed as a $\cap$-closed family.
In the previous section, we proved that $L$ is isomorphic to a $\cap$-closed family on \LIrr equipped with inclusion order $\subseteq$, that is, $\Csub{\PPIP{L}}$ in Theorem \ref{theo:BirkhoffSemiModular}.
A subset $X \subseteq \Lirr$ is said to be \textit{inconsistent} if there is no $F \in \Csub{\PPIP{L}}$ such that $X \subseteq F$.

Our aim is to give an optimal implicational base for modular semilattice $L$, viewed as a $\cap$-closed family $\Csub{\PPIP{L}}$.
Let $L$ be a modular semilattice.
An element $l' \in L$ is called a \textit{lower cover} of $l \in L$ if $l' < l$ and there is no element $l'' \in L$ such that $l' < l'' < l$. 
The relation $l' \prec l$ means that $l'$ is a lower cover of $l$.
Every \joinIrr element $q$ has the unique lower cover $\underline{q}$.
If $\underline{q}$ is not the minimum element, then $q$ is said to be \textit{nonatomic}.
For every nonatomic element $q$, its unique lower cover $\underline{q}$ is decomposed by \joinIrr elements $\{p_i\}$ as $\underline{q} = p_1 \vee p_2 \vee \dots \vee p_n$.
The subset $\{p_i\}$ is called an \textit{irreducible decomposition} of $\underline{q}$ if no proper subsequence $\{p_{i_k}\}$ decomposes $\underline{q}$, i.e., satisfies $\underline{q} = p_{i_1} \vee p_{i_2} \vee \dots \vee p_{i_m}$.
For a nonatomic \joinIrr element $q$, let $B_q = \{p_1, p_2, \dots, p_m \}$ denote an irreducible decomposition of  $\underline{q}$. 
An element $l \in L$ is called an \textit{$\MnElem$-element} ($n \geq 3$) if there are $y, x_0, x_1, \dots, x_{n-1} \in L$ with $y \prec x_i \prec l $ such that $x_i \wedge x_j = y$ and $x_i \vee x_j = l$ for all distinct $i,j$ in $\{0,1,\dots, n-1\}$. 
We call $y$ the bottom and $x_i$ an intermediate element.
A function $\phi \colon L \rightarrow \power{L^{\mathrm{ir}}}$ is defined by $\phi(l) = \{ p \in \Lirr \mid p \leq l\}$. 

Wild \cite{wild2000optimal} characterized an optimal implicational base for a modular lattice.
\begin{thm}[{\cite[PROPOSITION 5]{wild2000optimal}}]
\label{thm:optimalModular}
Let $L$ be a modular lattice. 
An optimal implicational base for $\Csub{\PPIP{L}}$ consists of the following implications:
\begin{itemize}
  \item $\{q\} \rightarrow B^q$ for every nonatomic $q \in \Lirr$;
  \item $\{ p_i^x, q_j^x \} \rightarrow \{ r_{j+ 1\mod n}^x \}$ for all $0 \leq i < j \leq n - 1$ and $\MnElem$-elements $x \in L$ with the bottom $y$ and intermediate elements $x_0, x_1, \dots, x_{n-1}$,
    where $p_i^x \in \phi(x_i) \setminus \phi(y) $, $q_j^x \in \phi(x_j) \setminus \phi(y)$, and $r^x_{j+ 1\mod n} \in \phi(x_{j+ 1\mod n}) \setminus \phi(y)$.
\end{itemize}
\end{thm}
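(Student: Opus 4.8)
The plan is to establish the statement in two halves: that the displayed collection $\Sigma$ is an implicational base of $\Csub{\PPIP{L}}$, and that it has minimum size among all such bases. Throughout I identify the modular lattice $L$ with the closure system $\{\phi(l)\mid l\in L\}$ on the ground set $\Lirr$, whose closure operator is $X\mapsto\phi\bigl(\bigvee X\bigr)$ (well defined since $L$ is a lattice), and I use repeatedly that $\phi(q)=\{q\}\cup\phi(\underline q)$ for a \joinIrr element $q$.

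For the base property, I would first check soundness: every closed set $\phi(l)$ satisfies every implication of $\Sigma$. For $\{q\}\rightarrow B^q$ this is immediate, since $q\in\phi(l)$ gives $q\le l$, hence $\underline q\le l$, hence $B^q\subseteq\phi(\underline q)\subseteq\phi(l)$. For a triangle implication attached to an $\MnElem$-element $x$ with bottom $y$ and intermediate elements $x_0,\dots,x_{n-1}$, the key point is that any $p\in\phi(x_k)\setminus\phi(y)$ satisfies $p\vee y=x_k$ because $y\prec x_k$; a short computation inside the height-two interval $[y,x]$, using modularity, then shows that $r^x_{j+1}$ lies below the join of $p^x_i$ and $q^x_j$, so the implication is sound. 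The genuinely nontrivial half is completeness: I must show that every $\Sigma$-closed $X$ equals $\phi(\bigvee X)$. Assuming not, I would take a \joinIrr $p\le\bigvee X$ with $p\notin X$ that is minimal with this property, and argue that it forces a violated implication of $\Sigma$. Here the structure theory of irredundant join-covers in modular lattices enters: such a minimal failure has one of two shapes -- either $p$ lies below some \joinIrr $q$ that is \emph{in} $X$, in which case minimality pushes the obstruction down to $\underline q$ and a missing member of $B^q$, contradicting the type-(a) implication for $q$; or, passing from a join-cover $p\le a\vee b$ with $p\not\le a$ and $p\not\le b$ to the elements $a\wedge(p\vee b)$ and $b\wedge(p\vee a)$ (the classical ``perspectivity'' step), one exhibits an $\MnElem$-configuration in which $X$ already contains the chosen representatives of two intermediate elements $x_i,x_j$ but not that of $x_{j+1}$, contradicting a triangle implication. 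Pinning down this dichotomy -- that the minimal violations of lattice-closure in a modular lattice are exactly the ones the two families of implications are designed to catch -- is the crux of this half.

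For optimality I would use Wild's framework of essential implications \cite{wild1994theory,wild2016the}. I first observe that $\Sigma$ is irredundant: deleting any single implication strictly enlarges $\mathcal F(\Sigma)$, as one sees by exhibiting the particular non-closed set it was responsible for (for a type-(a) implication, a set containing $q$ but missing part of $B^q$; for a triangle implication, the analogous $M_n$-witness). Then I bound $s(\Sigma')$ from below, for an arbitrary base $\Sigma'$, by charging its implications to disjoint ``essential situations''. For each nonatomic $q$: feeding the singleton $\{q\}$ into $\Sigma'$ must reproduce all of $\phi(\underline q)$, the first implication that fires must have premise exactly $\{q\}$ (an empty premise is useless here), and an induction on the closure process -- using that the members of an irredundant decomposition $B^q$ are pairwise incomparable and so cannot be derived from one another -- shows that the total size of the $\Sigma'$-implications devoted to $q$ is at least $1+|B^q|$, the value realized by the single implication $\{q\}\rightarrow B^q$. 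For each $\MnElem$-element $x$: the chosen representatives $p^x_0,\dots,p^x_{n-1}$ together with $\phi(y)$ carve out a copy of the closure system of the diamond $M_n$ over the base $\phi(y)$, for which $\binom n2$ triangle implications of size $3$ are classically optimal, charging at least $3\binom n2$ to $x$. Finally I must argue that these charges do not overlap -- that an optimal base cannot amortize a single implication simultaneously against two of these situations -- because the relevant minimal premises (the singletons $\{q\}$, and the two-element premises internal to distinct $M_n$-blocks) are pairwise distinct and cannot be merged without violating soundness or increasing size; summing the charges yields exactly $s(\Sigma)$, hence $s(\Sigma')\ge s(\Sigma)$.

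I expect the main obstacle to be this non-overlap bookkeeping in the optimality half, together with the claim that a single $M_n$-block genuinely needs $\binom n2$ size-three implications and nothing cheaper: this is precisely where Wild's notion of essential/critical implications and the explicit analysis of the $M_n$ closure system must be brought to bear. The completeness half, while also delicate, reduces fairly directly to standard modular-lattice facts about irredundant join-covers and the pervasiveness of $M_n$ sublattices, so I would not anticipate serious trouble there.
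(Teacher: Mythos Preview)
The paper does not prove this statement itself --- it is cited from Wild --- but it proves the semilattice generalization (Theorem~\ref{thm:pesudoclosedSetForSemiModular}), whose argument specializes to the lattice case. That argument is organized around pseudoclosed sets: one first identifies the pseudoclosed sets of $\Csub{\PPIP{L}}$ (Theorem~\ref{theo:pseudoclosedModular}) as the singletons $\{q\}$ for nonatomic $q$ and the unions $\phi(x_i)\cup\phi(x_j)$ attached to $\MnElem$-elements, and then invokes the general fact (Theorem~\ref{thm:GDbaseSemi}) that every base must contain, for each pseudoclosed $P$, an implication with premise $A\subseteq P$ and $c^{\bullet}(A)=P$. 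Optimality becomes a local check, pseudoclosed set by pseudoclosed set, that the displayed premises (of sizes $1$ and $2$) and conclusions (of sizes $|B^q|$ and $1$) are as small as possible.

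Your plan is a genuinely different organization: base-hood by a direct minimal-counterexample analysis, and optimality by a bespoke charging argument over ``essential situations''. The base half is plausible, though your dichotomy is not stated quite correctly --- in the case ``$p$ lies below some $q\in X$'' you do not obtain a violated type-(a) implication (since $q\in X$ already forces $B^q\subseteq X$, not the contrary), and the argument really needs an honest induction on minimal join-covers rather than a one-step contradiction. The optimality half is where the divergence matters most: the ``non-overlap'' step you flag as the main obstacle is exactly what pseudoclosed-set theory provides for free, since distinct pseudoclosed sets force distinct implications in \emph{any} base, with no possibility of amortization. At the crucial moment you yourself defer to ``Wild's notion of essential/critical implications'', which is this framework under another name. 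So your approach is not wrong, but it postpones the systematic tool that the paper (and Wild) apply from the outset; adopting pseudoclosed sets explicitly would collapse both of your acknowledged obstacles into routine verifications.
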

We generalize this result for a modular semilattice. A pair $(p, q) \in \Lirr \times \Lirr$ is called a \textit{minimal inconsistent pair} if the following conditions are satisfied: $p \vee q$ does not exist; if $p' \leq p$, $q' \leq q$, and $p' \vee q'$ does not exist, then $p = p'$ and $q = q'$ for any $p', q' \in \Lirr$.
\begin{thm}
\label{thm:pesudoclosedSetForSemiModular}
Let $L$ be a modular semilattice. 
An optimal implicational base for $\Csub{\PPIP{L}}$ consists of the following implications:
\begin{itemize}
  \item $\{q\} \rightarrow B^q$ for every nonatomic $q \in \Lirr$;
  \item $\{ p_i^x, q_j^x \} \rightarrow \{ r_{j+ 1\mod n}^x \}$ for all $0 \leq i < j \leq n - 1$ and $\MnElem$-elements $x \in L$ with the bottom $y$ and intermediate elements $x_0, x_1, \dots, x_{n-1}$,
    where $p_i^x \in \phi(x_i) \setminus \phi(y) $, $q_j^x \in \phi(x_j) \setminus \phi(y)$, and $r^x_{j+ 1\mod n} \in \phi(x_{j+ 1\mod n}) \setminus \phi(y)$;
  \item $\{p,q\} \rightarrow \emptyset$ for every minimal inconsistent pair $(p,q) \in \Lirr \times \Lirr$.
\end{itemize}
\end{thm}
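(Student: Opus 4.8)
The plan is to reduce the modular‑semilattice case to the modular‑lattice case of Theorem~\ref{thm:optimalModular}, handling the only new phenomenon — the absence of some joins — via the minimal inconsistent pairs. Write $\Sigma$ for the proposed base and $\Sigma_0$ for its first two families (the ones inherited from Wild's theorem). First I would show $\mathcal{F}(\Sigma)=\Csub{\PPIP{L}}$, i.e.\ that $\Sigma$ is indeed an implicational base. Membership in $\Csub{\PPIP{L}}$ of a subset $X\subseteq\Lirr$ amounts to $X$ being a consistent subspace, and this splits into two requirements: (a) $X$ is consistent, and (b) $X$ is a subspace (an ideal closed under collinearity). I expect the improper implications $\{p,q\}\to\emptyset$ over minimal inconsistent pairs to capture (a) exactly: by (IC2) and the definition of a minimal inconsistent pair, an ideal $X$ is inconsistent iff it contains some minimal inconsistent pair, so forbidding those is necessary and sufficient among ideals. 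For (b), on the principal ideal $I_{\psi(X)}$ — which is a modular lattice by the definition of modular semilattice and Theorem~\ref{theo:BirkhoffSemiModular} — the subspace condition is governed precisely by $\Sigma_0$, by Wild's theorem applied to that lattice; one checks that the $\MnElem$‑elements and nonatomic \joinIrr elements relevant to $X$ all lie below $\psi(X)$, so the relevant implications of $\Sigma_0$ are exactly those of an implicational base for $I_{\psi(X)}$. Combining, $X\in\mathcal{F}(\Sigma)$ iff $X$ is a consistent subspace, giving $\mathcal{F}(\Sigma)=\Csub{\PPIP{L}}$.

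Next I would prove optimality, i.e.\ that no implicational base for $\Csub{\PPIP{L}}$ has smaller size. The standard tool (as in \cite{wild2000optimal,wild2016the}) is the notion of an \emph{essential} (or \emph{critical}) implication: for each implication $A\to B$ in $\Sigma$ one exhibits a \emph{quasi‑closed} set $Q$ (closed under the other implications but not this one) that forces any base to contain an implication with premise $Q\supseteq A$ and a conclusion that adds the same new element; summing the lengths over a suitable disjoint family of such witnesses matches $s(\Sigma)$ from below. For the two families inherited from $\Sigma_0$ I would reuse Wild's witnesses verbatim, after checking they remain quasi‑closed in the semilattice (not just the sublattice) — here the point is that adding elements outside $I_{\psi(Q)}$ cannot "repair" a missing implication, because the missing element is forced to stay inside that principal ideal. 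For the new family $\{p,q\}\to\emptyset$: given a minimal inconsistent pair $(p,q)$, the set $Q:=(\phi(p)\cup\phi(q))\setminus\{p,q\}$, or rather its closure under $\Sigma_0$, is consistent by minimality of the pair, hence closed, but $Q\cup\{p,q\}$ is inconsistent; so any base must contain an improper implication with premise inside $\{p,q\}$‑completing to something inconsistent, and minimality of the pair prevents the premise from being a singleton. This shows each $\{p,q\}\to\emptyset$ contributes at least $2$ to the size of any base, and that these contributions are "located" at distinct pairs, so they add up.

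The main obstacle I anticipate is the bookkeeping that makes the lower bound \emph{tight} rather than merely $\Omega$ of the right quantity: one must show the witnesses for the three families can be chosen so that the forced implications are pairwise distinct and their premises/conclusions do not overlap in a way that would let a clever base amortize two of our implications into one. Concretely, I expect the delicate case to be the interaction between an $\MnElem$‑structure and a nearby minimal inconsistent pair sitting just above it — one has to argue that the quasi‑closed witness for the collinearity‑type implication cannot simultaneously serve as a witness forcing an improper implication, and vice versa, because the former lives inside a principal ideal that is a genuine lattice (no inconsistency there) while the latter is by definition about a place where a join fails to exist. Once this separation is established, the proof is: $\mathcal{F}(\Sigma)=\Csub{\PPIP{L}}$, every base has size $\ge s(\Sigma)$, hence $\Sigma$ is optimal.
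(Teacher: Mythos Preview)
Your two-part plan matches the paper's structure: show $\Sigma$ is a base by handling consistent sets via restriction to a principal-ideal modular lattice (where Wild's Theorem~\ref{thm:optimalModular} applies) and inconsistent sets via minimal inconsistent pairs; then argue optimality through quasi\-closed witnesses. The first part is essentially what the paper does, and your argument there is fine.

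The optimality half has a real gap. The paper does not build ad-hoc witnesses; it first extends the standard pseudoclosed-set machinery from closure systems to arbitrary $\cap$-closed families via the Arias--Balc\'azar reduction (adjoin a new point $\perp$, replace each $A\to\emptyset$ by $A\to\{\perp\}$, add $\{\perp\}\to E\cup\{\perp\}$). This yields a clean lower-bound statement (Theorem~\ref{thm:GDbaseSemi}): every base contains, for each pseudoclosed set $P$, an implication with premise $A\subseteq P$ and $c^{\bullet}(A)=P$. The paper then \emph{characterizes all pseudoclosed sets} of $\Csub{\PPIP{L}}$ (Theorem~\ref{theo:pseudoclosedSemi}): the consistent ones are exactly Wild's two families (by restriction), and the inconsistent ones are exactly $c^{\bullet}(\phi(p)\cup\phi(q))$ for minimal inconsistent pairs $(p,q)$. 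Optimality then reduces to checking that each implication in $\Sigma$ has minimum-size premise and conclusion for its pseudoclosed set---a short verification. Your proposed witness for the improper case, the $\Sigma_0$-closure of $(\phi(p)\cup\phi(q))\setminus\{p,q\}$, is the wrong object: you have not shown it is closed (it need not be a subspace), and in any case a \emph{consistent} closed set cannot by itself force an improper implication in an arbitrary base. The correct pseudoclosed set is the quasiclosure of $\phi(p)\cup\phi(q)$ itself, which is inconsistent and properly quasiclosed. Finally, the ``amortization'' obstacle you anticipate dissolves once this framework is in place: the condition $c^{\bullet}(A)=P$ pins each forced implication to a single pseudoclosed set, and consistent pseudoclosed sets (where $c(P)$ exists) are automatically separated from inconsistent ones (where it does not), so no overlap analysis between the $\mathcal{M}_n$-implications and the improper ones is needed.
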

The compact representation by implicational bases is efficient when the modular semilattice $L$ contains a large \textit{diamond}.
A \textit{diamond} is a modular lattice whose height is two and whose maximum element is an $\MnElem$-element.
To represent a diamond by a PPIP, we need $O(n^3)$ collinear triples.
However optimal implicational base for it contains $O(n^2)$ implications.
\begin{ex}
An optimal implicational base for the modular semilattice in Figure \ref{fig:PPIPrep} (a) consists of the following implications:
$4 \rightarrow 5$;
$5 \rightarrow 1,3$;
$6 \rightarrow 1,2$;
$7 \rightarrow 2,3$;
$5,6 \rightarrow 7$;
$6,7 \rightarrow 5$;
$7,5 \rightarrow 6$;
$1,8 \rightarrow \emptyset$;
$2,4 \rightarrow \emptyset$.
\end{ex}
By using Theorem \ref{thm:pesudoclosedSetForSemiModular}, we can convert in polynomal time an implicational
base $\Sigma$ of a modular semilattice into optimal one, provided $E$ can be identified with ${\mathcal{F}^{\mathrm{ir}}(\Sigma)}$.
A family $\Sigma$ of implications is said to be {\it simple} if the
map $e \mapsto \bigcap \{X \in \mathcal{F}(\Sigma) | e \in X \}$ is a bijection
between $E$ and $\mathcal{F}^{\mathrm{ir}}(\Sigma)$. 

\begin{thm}
\label{thm:polynomialConvertion}
Any simple family $\Sigma$ of implications such that
$\mathcal{F}(\Sigma)$ is a modular semilattice, 
an optimal implicational base of $\mathcal{F}(\Sigma)$ is obtained in polynomial time.
\end{thm}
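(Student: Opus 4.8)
\textbf{Proof plan for Theorem \ref{thm:polynomialConvertion}.}

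The plan is to follow the standard pipeline for producing optimal implicational bases and check that each stage runs in polynomial time in our setting. First I would use simplicity of $\Sigma$ to identify the ground set $E$ with $\mathcal{F}^{\mathrm{ir}}(\Sigma)$, which (via the closure operator $X \mapsto \bigcap\{F \in \mathcal{F}(\Sigma) \mid X \subseteq F\}$) is exactly $\Lirr$ for the modular semilattice $L = \mathcal{F}(\Sigma)$. This identification is computable in polynomial time: the closure of any set is obtained by the usual forward-chaining (LinClosure/Closure) procedure on $\Sigma$, so $\mathcal{F}^{\mathrm{ir}}(\Sigma)$, together with its induced partial order, the cover relation, and the unique lower cover $\underline{q}$ of each \joinIrr element $q$, can be extracted in time polynomial in $s(\Sigma)$ and $|E|$.

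Next I would explicitly assemble the three families of implications appearing in Theorem \ref{thm:pesudoclosedSetForSemiModular}. The implications $\{q\} \to B^q$ for nonatomic $q$ require, for each such $q$, computing an irreducible decomposition of $\underline q$; since $\underline q = \bigvee\{p \in \Lirr \mid p \le \underline q\}$ and one can greedily discard redundant join-irreducibles (testing redundancy by recomputing a closure), each $B^q$ is found in polynomial time, and there are at most $|E|$ of them. For the diamond-type implications, I would enumerate the $\MnElem$-elements: an element $x$ is an $\MnElem$-element iff the interval between some $y \prec x_i \prec x$ has at least three atoms pairwise joining to $x$; this can be checked by scanning, for each pair $y \prec x$ of the form ``$x$ covers the join of two covers of $y$,'' the atoms of the interval $[y,x]$, all computable from the already-extracted order and cover data. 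For each $\MnElem$-element one picks representatives $p_i^x \in \phi(x_i)\setminus\phi(y)$, etc., yielding $O(n^2)$ implications per $\MnElem$-element. Finally, minimal inconsistent pairs $(p,q)$ are detected by testing, for each pair of \joinIrr elements, whether $p \vee q$ fails to exist (equivalently whether the closure of $\{p,q\}$ under the proper implications of $\Sigma$ violates some improper implication, i.e.\ is the whole of $E$) and whether minimality holds under passing to $p' \le p$, $q' \le q$; this is a polynomial scan over pairs and their lower sets. Correctness that the resulting $\Sigma'$ has $\mathcal{F}(\Sigma') = \mathcal{F}(\Sigma)$ and is optimal is precisely the content of Theorem \ref{thm:pesudoclosedSetForSemiModular}, so nothing new needs to be argued there.

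The main subtlety I expect is not any single computation but the bookkeeping that every object named in Theorem \ref{thm:pesudoclosedSetForSemiModular} is actually reconstructible in polynomial time purely from the implicational input --- in particular, recognizing which elements of $L$ are $\MnElem$-elements and extracting their bottoms and intermediate elements, since these are defined in terms of the abstract lattice structure of $L$ rather than of $\Sigma$. The key point making this tractable is that $L = \mathcal{F}(\Sigma)$ need never be listed in full: one only ever works with elements of $L$ that are closures of small sets (single \joinIrr elements, pairs, lower covers), and each such closure is computed by one call to the forward-chaining closure routine on $\Sigma$. Once this is set up, the height of $L$ is at most $|E|$, the number of \joinIrr elements is $|E|$, and every quantifier in Theorem \ref{thm:pesudoclosedSetForSemiModular} ranges over at most polynomially many tuples of \joinIrr elements, so the whole conversion --- and hence $s(\Sigma')$ itself --- is polynomially bounded, giving the claimed polynomial-time algorithm.
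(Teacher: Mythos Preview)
Your overall pipeline --- identify $E$ with $\Lirr$, then assemble the three families of implications from Theorem~\ref{thm:pesudoclosedSetForSemiModular} --- matches the paper's strategy, and your treatment of the first and third families (nonatomic $q$'s and minimal inconsistent pairs) is essentially the same as the paper's and is fine.

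The gap is in the second family, the $\MnElem$-implications. You propose to ``scan, for each pair $y \prec x$ of the form `$x$ covers the join of two covers of $y$', the atoms of the interval $[y,x]$'', using ``the already-extracted order and cover data''. But the cover relations $y \prec x_i \prec x$ live in $L$, not in $\Lirr$; the order and cover data you extracted are for $\Lirr = E$, and $L$ itself may be exponential in $|E|$. Your later justification that ``every quantifier in Theorem~\ref{thm:pesudoclosedSetForSemiModular} ranges over at most polynomially many tuples of \joinIrr elements'' is not correct as stated: the quantifier ``for all $\MnElem$-elements $x$'' ranges over elements of $L$, and you have not shown how to enumerate those $x$ (together with their bottom $y$ and intermediate elements $x_i$) from polynomially many closure calls on small subsets of $E$. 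Even granting that each $\MnElem$-element $x$ equals $p \vee q$ for a pair $p,q \in \Lirr$ (which you do not argue), you still need to recover $y$ and the $x_i$, and nothing in your sketch does this.

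The paper handles this point differently: it computes the GD-base (Duquenne--Guigues canonical base) of the reduced closure system $\mathcal{F}(\Sigma')$, which is known to be obtainable in polynomial time from $\Sigma$. This directly lists all pseudoclosed sets. Those of the second form in Theorem~\ref{theo:pseudoclosedSemi} are then recognized by the criterion ``$c(P)$ exists and $P$ is not a singleton \joinIrr element'', and for each such $Q = \phi(x_i)\cup\phi(x_j)$ the individual pieces $\phi(x_i)$, $\phi(x_j)$ (hence $\phi(y) = \phi(x_i)\cap\phi(x_j)$ and suitable $p_i^x, q_j^x, r^x_{j+1}$) are extracted by intersecting $Q$ with the other pseudoclosed sets having the same closure. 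This GD-base step is the missing ingredient in your argument; without it, the enumeration of the diamond-type implications is not shown to be polynomial.
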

This is also a generalization of the result of Wild~\cite{wild2000optimal} for modular lattices.
The rest of this section is devoted to proving Theorem \ref{thm:pesudoclosedSetForSemiModular}, whereas
the proof of Theorem \ref{thm:polynomialConvertion} is given in the last of Section \ref{sec:6.1}.
We prove Theorem \ref{thm:pesudoclosedSetForSemiModular} by combining three previous results.
One is Arias and Balc\'{a}zar's reduction \cite{arias2009canonical} of a $\cap$-closed family to a \textit{closure system}, which is 
a $\cap$-closed family $\mathcal{F} \subseteq \power{E}$ with $E \in \mathcal{F}$.
For closure systems, several useful results are known.
By combining Arias and Balc\'{a}zar's reduction, they are generalized for $\cap$-closed systems.
The second is the characterization of optimal implicational bases for closure systems by \textit{pseudoclosed sets} \cite{wild2016the}.
The third is Wild's characterization \cite{wild2000optimal} of peseudoclosed sets of modular lattices.

We start with some definitions.
The closure operator $c$ of $\cap$-closed family $\mathcal{F}$ is defined by 
$c(X) := \bigcap \{F \in \mathcal{F} \mid X \subseteq F\}$.
If the set in the right-hand side is empty, $c(X)$ is undefined.
The closure operator gives the minimum closed set containing its operand.
Our closure operator is indeed a closure operator in a usual sense if $\mathcal{F}$ is a closure system.
It is easy to see that $c$ satisfies the following property: (monotonicity) $A \subseteq B \Rightarrow c(A) \subseteq c(B)$; (extensionality) $A \subseteq c(A)$; (idempotency) $c(c(A)) = c(A)$; where we assume the existence of $c(A)$ and $c(B)$.
The restriction of $\mathcal{F}$ to its closed set $A$, written as $\mathcal{F} \restriction_A$, is the closure system on $A$ defined by $\{F \in \mathcal{F} \mid F \subseteq A\}$.

Next we explain Arias and Balc\'{a}zar's reduction \cite{arias2009canonical}.
Let $F$ be a $\cap$-closed family.
For a new element $\perp$, let $E' := E \cup \{\perp\}$.
The \textit{reduced closure system} $\mathcal{F'} \subseteq \power{E'}$ of $\mathcal{F}$ is defined by $\mathcal{F'} := \mathcal{F} \cup \{E'\}$.
The \textit{reduced implicational base} $\Sigma'$ of $\Sigma$ is defined by \begin{align*}
\Sigma' = &\{A \rightarrow B \mid (A \rightarrow B) \in \Sigma , \;\;B \neq \emptyset \}\\
& \cup \{A \rightarrow \{\perp\} \mid (A \rightarrow \emptyset) \in \Sigma \}\\
& \cup \{ \{\perp\} \rightarrow E' \}.
\end{align*}
Notice that $\Sigma'$ is indeed an implicational base of $\mathcal{F'}$.
The operation $(\cdot)^{'}$ is an injection, and hence we can recover $\mathcal{F}$ and $\Sigma$ from $\mathcal{F'}$ and $\Sigma'$ of above forms, respectively.

We next characterize optimal implicational bases using \textit{pseudoclosed sets}.
Let $\mathcal{F}$ be a $\cap$-closed set and $c$ its closure operator.
A subset $X$ is said to be \textit{quasiclosed} if
\[
  \begin{cases}
    Y \subseteq X  \text{ and } c(Y) \neq c(X) \Rightarrow c(Y) \subseteq X & (\text{if } c(X) \text{ exists}), \\
    Y \subseteq X \text{ and } c(Y) \text{ exists} \Rightarrow c(Y) \subseteq X & (\text{if } c(X) \text{ does not exist}).
  \end{cases}
\]
It is clear that the family of quasiclosed sets forms a $\cap$-closed family.
We denote its closure operator by $c^{\bullet}$.
A \textit{properly quasiclosed} set is a quasiclosed but not closed set.
For any $X,Y \in \mathcal{F}$, the image of $X$ under $c$ is said to be \textit{equivalent} to that of $Y$ if and only if
\[
  \begin{cases}
   c(X) = c(Y) & (\text{if } c(X) \text{ exists}), \\
   c(Y) \text{ does not exist}  & (\text{if } c(X) \text{ does not exist}).
  \end{cases}
\]
A properly quasiclosed set $P$ is said to be \textit{psudoclosed} if $P$ is minimal among the properly quasiclosed sets whose images under $c$ is equivalent to that of $P$.
Our definition of pseudoclosed sets is a generalization of the standard one \cite{wild2016the}.
However these two definitions are closely related.
For closure systems, our definition of pseudoclosed sets coincides with the standard one.
Furthermore, for any $\cap$-closed system $\mathcal{F}$, a subset $P \in \power{E}$ is pseudoclosed in our sense if and only if $P$ is pseudoclosed of $\mathcal{F'}$ in the standard sense.

Optimal implicational bases are characterized as follows:
\begin{thm}[Essentially \cite{wild2016the}]
\label{thm:GDbaseSemi}
Let $\mathcal{F} \subseteq \power{E}$ be a $\cap$-closed family.
For every pseudoclosed set $P$, every implicational base $\Sigma$ of $\mathcal{F}$ contains an implication whose premise $A$ satisfies $A \subseteq P$ and $c^{\bullet}(A) = P$.
\end{thm}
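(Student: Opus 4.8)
The plan is to deduce Theorem~\ref{thm:GDbaseSemi} from the analogous statement for closure systems (the result attributed to \cite{wild2016the}) by means of the Arias--Balc\'azar reduction introduced above. Fix an implicational base $\Sigma$ of $\mathcal{F}$ and a pseudoclosed set $P$ of $\mathcal{F}$ in the generalized sense, and pass to the reduced closure system $\mathcal{F}' = \mathcal{F} \cup \{E'\}$ on $E' = E \cup \{\perp\}$ together with the reduced base $\Sigma'$. We have already observed that $\Sigma'$ is an implicational base of $\mathcal{F}'$, that all its implications are proper, and that $P$ is a pseudoclosed set of $\mathcal{F}'$ in the standard sense. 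Hence the classical characterization of implicational bases for closure systems applies and yields an implication $A \to B$ in $\Sigma'$ with $c^{\bullet}_{\mathcal{F}'}(A) = P$, where $c^{\bullet}_{\mathcal{F}'}$ denotes the quasiclosure operator of $\mathcal{F}'$; by extensionality this already gives $A \subseteq P$.

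Next I would transport this implication back to $\Sigma$. Since $A \subseteq P \subseteq E$, we have $\perp \notin A$, so $A \to B$ is not the added implication $\{\perp\} \to E'$. By the definition of $\Sigma'$ it is therefore either a proper implication of $\Sigma$, or an implication $A \to \{\perp\}$ coming from an improper implication $A \to \emptyset$ of $\Sigma$. In either case $\Sigma$ contains an implication with premise $A$, and $A \subseteq P$ already holds, so it remains only to check that $c^{\bullet}_{\mathcal{F}}(A) = P$ for the quasiclosure operator $c^{\bullet}_{\mathcal{F}}$ of $\mathcal{F}$ itself.

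The substantive step is to reconcile the two quasiclosure operators on subsets of $E$. One first notes the elementary fact that for $Y \subseteq E$ the closure $c_{\mathcal{F}}(Y)$ exists if and only if $c_{\mathcal{F}'}(Y) \neq E'$, and that in this case the two closures coincide; this is immediate from $\mathcal{F}' = \mathcal{F}\cup\{E'\}$. Substituting this into the case-split definition of quasiclosedness shows that a subset $Y \subseteq E$ is quasiclosed for $\mathcal{F}$ if and only if it is quasiclosed for $\mathcal{F}'$. Since every quasiclosed superset of $A$ in $\mathcal{F}$ is automatically contained in $E$, hence quasiclosed for $\mathcal{F}'$ and thus a superset of $c^{\bullet}_{\mathcal{F}'}(A) = P$, while $P \subseteq E$ is itself quasiclosed for $\mathcal{F}$ and contains $A$, the set $P$ is exactly the minimum quasiclosed superset of $A$ in $\mathcal{F}$; that is, $c^{\bullet}_{\mathcal{F}}(A) = P$, as required. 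I expect the main obstacle to lie precisely in this reconciliation rather than in any new idea: one must run the case analysis in the definition of quasiclosedness through the reduction, including the degenerate situations in which $c_{\mathcal{F}}$ is undefined, and confirm that rewriting $A \to \emptyset$ as $A \to \{\perp\}$ neither creates nor destroys admissible premises. The cited theorem supplies all the combinatorial force; the work here is to verify that its hypotheses hold for $(\mathcal{F}',\Sigma')$ and that its conclusion pulls back faithfully to $(\mathcal{F},\Sigma)$.
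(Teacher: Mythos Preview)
Your proposal is correct and follows essentially the same route as the paper: reduce to the closure-system case via the Arias--Balc\'azar construction, apply the classical result to $(\mathcal{F}',\Sigma')$, and pull the witnessing implication back to $\Sigma$. You have in fact been more careful than the paper in explicitly reconciling $c^{\bullet}_{\mathcal{F}}$ with $c^{\bullet}_{\mathcal{F}'}$ on subsets of $E$, which the paper leaves implicit.
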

\begin{proof}
We remarked above that $\Sigma'$ is an implicational base of closure system $\mathcal{F'}$.
It is known that this theorem holds for closure systems \cite{wild2016the}.
Since $P \in \power{E}$ is psudoclosed in our sense if and only if so is $P$ in $\mathcal{F'}$, 
there is an implication of the above form in $\Sigma'$.
Hence we have this theorem by recovering $\Sigma$ from $\Sigma'$ as mentioned above.
\end{proof}
By this theorem, we have a strategy to find optimal implicational bases:
find all pseudoclosed sets; 
then, for any pseudoclosed set $P$, minimize the cardinality of the premise and the conclusion of corresponding implications $A \rightarrow B$.

We next characterize pseudoclosed sets of $\Csub{\PPIP{L}}$ for modular semilattice $L$.
Wild \cite{wild2000optimal} characterized in the cases of modular lattices.
We generalize his result for modular semilattices.
Recall that $\phi \colon L \rightarrow \power{L^{\mathrm{ir}}}$ was defined by $\phi(l) := \{ p \in \Lirr \mid p \leq l\}$.
\begin{thm}[{\cite[PROPOSITION 4]{wild2000optimal}}]
\label{theo:pseudoclosedModular}
Let $L$ be a modular lattice and $c$ the closure operator of $\Csub{\PPIP{L}}$.
The family of pseudoclosed sets of $\Csub{\PPIP{L}}$ consists of the following subsets:
\begin{itemize}
  \item $\{q\}$ for every nonatomic $q \in L^{\mathrm{ir}}$;
  \item $\phi(x_i) \cup \phi(x_j)$ for every $0 \leq i < j \leq n-1$ and $\MnElem$-element $x \in L$ with intermediate elements $x_0, x_1, \dots, x_{n-1}$;
\end{itemize}
\end{thm}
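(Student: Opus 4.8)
The plan is to transport the entire statement through the Birkhoff correspondence of Theorem~\ref{theo:modular}. Since $L$ is a modular lattice, $\Csub{\PPIP{L}}=\Subsp{\POS{L}}$ carries no inconsistent pairs, every subset of $\Lirr$ has a closure, and under $\phi$ the closed sets are exactly the principal ideals $\phi(l)=\{p\in\Lirr\mid p\le l\}$, while the closure operator is $c(X)=\phi(\bigvee X)$. With this dictionary the defining property of quasiclosedness becomes purely lattice-theoretic: $X\subseteq\Lirr$ is quasiclosed iff for every $Y\subseteq X$ one has $\bigvee Y=\bigvee X$ or $\phi(\bigvee Y)\subseteq X$. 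First I would record this reformulation together with $c(\emptyset)=\phi(\min L)=\emptyset$, and then prove the two inclusions separately: (A) every set in the two listed families is pseudoclosed, and (B) every pseudoclosed set lies in one of the families.

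For direction (A) the first family is immediate. For $X=\{q\}$ (note $q\in\Lirr$, since $\Lirr$ is the ground set) the only proper subset is $\emptyset$ with $c(\emptyset)=\emptyset\subseteq X$, so $\{q\}$ is quasiclosed; it is closed iff $\phi(q)=\{q\}$, i.e.\ iff $q$ is atomic, so it is properly quasiclosed exactly when $q$ is nonatomic; and being a singleton it is trivially minimal in its $c$-class, hence pseudoclosed. For the second family $P=\phi(x_i)\cup\phi(x_j)$ I would set $a:=\bigvee(Y\cap\phi(x_i))\le x_i$ and $b:=\bigvee(Y\cap\phi(x_j))\le x_j$ for $Y\subseteq P$, so that $\bigvee Y=a\vee b$. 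Using $y\prec x_i$ and $y\prec x_j$ one gets $a\vee y\in\{y,x_i\}$ and $b\vee y\in\{y,x_j\}$, and a modular-law case analysis on the height-two interval $[y,x]\cong\MnElem$ then forces $a\vee b$ to be $\le x_i$, to be $\le x_j$, or to equal $x_i\vee x_j=x$; in each case $\phi(\bigvee Y)\subseteq P$ or $\bigvee Y=x$, giving quasiclosedness. That $P$ is not closed uses $n\ge 3$: a third intermediate $x_k$ contributes a $\vee$-irreducible $p\le x_k\le x$ with $p\not\le x_i,x_j$ (otherwise $p\le x_i\wedge x_k=y\le x_j$), so $p\in\phi(x)\setminus P$. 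Minimality of $P$ in its class follows because deleting a point of $P$ either drops $\bigvee$ below $x$ or destroys quasiclosedness, as read off from the diamond relations $x_i\wedge x_j=y$ and $x_i\vee x_j=x$.

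For direction (B), let $P$ be pseudoclosed, put $l:=\bigvee P$, so $c(P)=\phi(l)$ and $\phi(l)\setminus P\ne\emptyset$. If $P=\{q\}$ is a singleton, then $q$ is nonatomic (otherwise $\{q\}$ would be closed), placing $P$ in the first family. If $|P|\ge 2$, I would use minimality to extract, for a suitable $r\in\phi(l)\setminus P$, two elements $p,q\in P$ whose join ``reaches'' $r$ while no single point of $P$ does; modularity then lets me build an $\MnElem$-interval $[y,x]$ with $x=l$, $y$ the relevant common meet, and intermediate elements $x_0,\dots,x_{n-1}$ for which $P=\phi(x_i)\cup\phi(x_j)$. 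The supporting lemmas are the standard translation between failures of distributivity and height-two $\MnElem$-intervals in a modular lattice, together with the fact (Theorem~\ref{thm:optimalModular}) that the premises of the optimal base are exactly the $c^{\bullet}$-closures of such pairs.

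The main obstacle is direction (B): reconstructing the diamond $[y,x]$ and its intermediate elements from an arbitrary non-singleton pseudoclosed $P$, and verifying that the recovered $\phi(x_i)\cup\phi(x_j)$ coincides with $P$ rather than being a proper super- or subset. This rests entirely on the modular law through the diamond (transposition) isomorphism $[m\wedge y,m]\cong[y,m\vee y]$, which pins down height-two intervals; the same isomorphism is also the crux of the quasiclosedness case analysis in (A). I expect the most delicate part to be the minimality bookkeeping—guaranteeing that no strictly smaller properly quasiclosed set shares the closure—and I would handle it by a downward induction on $\phi(l)\setminus P$ that exploits the covering structure of $L$.
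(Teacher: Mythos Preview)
The paper does not actually prove this theorem: it is quoted verbatim from Wild \cite[Proposition~4]{wild2000optimal} and used as a black box in the proof of Theorem~\ref{theo:pseudoclosedSemi}. So there is no ``paper's own proof'' to compare against; your proposal is an attempt to reprove Wild's result from scratch.

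On the substance of your attempt: direction~(A) is largely sound. The singleton case is correct, and your case analysis for $\phi(x_i)\cup\phi(x_j)$ via the covering relations $y\prec x_i\prec x$ is the right mechanism for quasiclosedness. Your argument for \emph{minimality} of $\phi(x_i)\cup\phi(x_j)$, however, is not yet an argument: ``deleting a point either drops $\bigvee$ or destroys quasiclosedness'' addresses only one-point deletions, whereas pseudoclosedness requires that no properly quasiclosed $Q\subsetneq P$ whatsoever share the same closure. You need to show that any quasiclosed $Q\subseteq P$ with $\bigvee Q=x$ already contains $\phi(x_i)$ and $\phi(x_j)$; this takes a short extra step using the diamond relations and quasiclosedness applied to $Q\cap\phi(x_i)$ and $Q\cap\phi(x_j)$.

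Direction~(B) has a genuine gap. Your sketch invokes Theorem~\ref{thm:optimalModular} (``the premises of the optimal base are exactly the $c^{\bullet}$-closures of such pairs'') to pin down the shape of a non-singleton pseudoclosed set. But in Wild's development the logical order is the reverse: Proposition~4 (the present statement) determines the pseudoclosed sets, and Proposition~5 (Theorem~\ref{thm:optimalModular} here) then reads off the optimal base via the general machinery of Theorem~\ref{thm:GDbaseSemi}. Appealing to Theorem~\ref{thm:optimalModular} inside a proof of Theorem~\ref{theo:pseudoclosedModular} is therefore circular. To make~(B) self-contained you must, given a non-singleton pseudoclosed $P$ with $c(P)=\phi(l)$, directly exhibit an $\MnElem$-interval $[y,l]$ and two intermediates $x_i,x_j$ with $P=\phi(x_i)\cup\phi(x_j)$, using only modularity (e.g.\ the transposition principle and the fact that collinear triples of $\vee$-irreducibles below $l$ correspond to $\mathcal{M}_n$-subintervals). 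Your outline gestures at this, but the actual reconstruction---and the verification that the recovered union equals $P$ rather than merely containing it---is the whole content of Wild's Proposition~4 and is not supplied here.
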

This result is naturally generalized as follows:
\begin{thm}
\label{theo:pseudoclosedSemi}
Let $L$ be a modular semilattice and $c$ the closure operator of $\Csub{\PPIP{L}}$.
The family of pseudoclosed sets of $\Csub{\PPIP{L}}$ consists of the following subsets:
\begin{itemize}
  \item $\{q\}$ for every nonatomic $q \in L^{\mathrm{ir}}$;
  \item $\phi(x_i) \cup \phi(x_j)$ for every $0 \leq i < j \leq n-1$ and $\MnElem$-element $x \in L$ with intermediate elements $x_0, x_1, \dots, x_{n-1}$;
  \item $c^{\bullet}(\phi(x) \cup \phi(y))$ for every minimal inconsistent pair $x,y$ in $\PPIP{L}$.
\end{itemize}
\end{thm}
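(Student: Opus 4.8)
The strategy is to reduce the modular-semilattice case to the modular-lattice case already settled by Wild (Theorem \ref{theo:pseudoclosedModular}), using the Arias–Balc\'{a}zar reduction and the local structure of $\PPIP{L}$. Concretely, I would work with the reduced closure system $\mathcal{F}' := \Csub{\PPIP{L}} \cup \{E'\}$ where $E' = \Lirr \cup \{\perp\}$, and recall the fact stated above that $P \subseteq \Lirr$ is pseudoclosed in our generalized sense for $\Csub{\PPIP{L}}$ if and only if $P$ is pseudoclosed for $\mathcal{F}'$ in the standard sense; additionally $\mathcal{F}'$ has the new pseudoclosed sets whose closure is $E'$, i.e.\ whose closure ``does not exist'' at the level of $\mathcal{F}$. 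So the theorem splits into two claims: (a) every pseudoclosed set $P$ with $c(P)$ existing is of one of the first two types, and (b) every pseudoclosed set $P$ with $c(P)$ nonexistent (equivalently, $P$ inconsistent) is exactly of the third type $c^{\bullet}(\phi(x)\cup\phi(y))$ for a minimal inconsistent pair $(x,y)$.

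For claim (a): if $P$ is pseudoclosed and $c(P)$ exists, let $l := \psi(c(P)) \in L$, so $c(P) = \phi(l)$ and $P \subseteq \phi(l)$. Then $P$ is a properly quasiclosed set of $\Csub{\PPIP{L}}$ contained in the closed set $\phi(l)$, and one checks that $P$ is also quasiclosed (indeed pseudoclosed) in the restriction $\Csub{\PPIP{L}}\restriction_{\phi(l)}$. But by Proposition \ref{prop:SemiModularRepMain} (more precisely its proof), this restriction is isomorphic to $\Subsp{\POS{I_l}}$, i.e.\ to the modular \emph{lattice} $I_l$ viewed as a closure system. Here the key point is that quasiclosedness is a condition only referring to closures of \emph{subsets} of $P$, and since every subset of $P \subseteq \phi(l)$ has its closure inside $\phi(l)$ as well (it is consistent, bounded above by $l$), quasiclosedness in $\Csub{\PPIP{L}}$ and in the restriction coincide. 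Hence Wild's Theorem \ref{theo:pseudoclosedModular} applied to $I_l$ forces $P$ to be either $\{q\}$ for a nonatomic $q$, or $\phi(x_i)\cup\phi(x_j)$ for an $\mathcal{M}_n$-element $x \le l$; conversely each such set is genuinely pseudoclosed in $\Csub{\PPIP{L}}$ because the relevant implication cannot be recovered from a smaller-premise implication — again this is local to the principal ideal, so inherited from Wild.

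For claim (b): suppose $P$ is pseudoclosed with $c(P)$ nonexistent, i.e.\ $P$ is an inconsistent subset of $\Lirr$ minimal among properly quasiclosed sets whose closure is nonexistent. Since $P$ is inconsistent, $P$ contains some inconsistent pair, and by (IC2)/(IC1) and the finite-length assumption one can pass down to a minimal inconsistent pair $(x,y)$ with $x,y\in\Lirr$ and $\phi(x)\cup\phi(y)\subseteq\{$down-closure of $P\}$; the natural candidate is $P = c^{\bullet}(\phi(x)\cup\phi(y))$. I would prove two things: first, that $\phi(x)\cup\phi(y)$ already has nonexistent closure (immediate, since $x \smile y$ means $x\vee y$ does not exist), so $c^{\bullet}(\phi(x)\cup\phi(y))$ is a properly quasiclosed set with nonexistent closure and hence contains a pseudoclosed set of this ``nonexistent'' type; second, minimality — that no properly quasiclosed proper subset of $c^{\bullet}(\phi(x)\cup\phi(y))$ has nonexistent closure. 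For the latter, one uses that if $Q \subsetneq c^{\bullet}(\phi(x)\cup\phi(y))$ were properly quasiclosed with nonexistent closure, then $Q$ itself contains a minimal inconsistent pair $(x',y')$; quasiclosedness of $Q$ then forces $\phi(x')\cup\phi(y') \subseteq c^{\bullet}(\phi(x')\cup\phi(y')) \subseteq Q$, and one argues via the local structure — the restriction of $\PPIP{L}$ to the principal ideals $I_{x}$, $I_{y}$, which are modular lattices, together with (IC2) — that $\{x',y'\}$ and $\{x,y\}$ must generate the same quasiclosure, giving $Q \supseteq c^{\bullet}(\phi(x)\cup\phi(y))$, a contradiction. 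Going the other way, that every such pseudoclosed $P$ \emph{equals} $c^{\bullet}(\phi(x)\cup\phi(y))$ for \emph{some} minimal inconsistent pair, follows by taking $(x,y)$ to be a minimal inconsistent pair buried inside $P$ and invoking the same minimality.

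The main obstacle I anticipate is claim (b): unlike the lattice case, there is no ambient top element, so pseudoclosedness has to be argued against properly quasiclosed sets with \emph{nonexistent} closure, and the delicate part is showing that such a set cannot be made smaller than $c^{\bullet}$ of a single minimal inconsistent pair — in particular ruling out ``mixed'' candidates that are inconsistent for a reason not traceable to one minimal pair, and handling the interaction between the collinearity relation (which can enlarge $c^{\bullet}$) and the inconsistency relation. I expect this to require a careful case analysis using (CC1), (CC2), (IC2), and the fact that every principal ideal of $L$ is a modular lattice so that Wild's characterization is available locally.
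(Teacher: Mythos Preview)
Your proposal is essentially the same strategy as the paper's proof: split pseudoclosed sets by whether $c(P)$ exists, reduce the consistent case to Wild's Theorem \ref{theo:pseudoclosedModular} by restricting to the principal ideal $I_{\psi(c(P))}$, and handle the inconsistent case via minimal inconsistent pairs and the quasiclosure operator $c^{\bullet}$. The consistent case (your claim (a)) matches the paper almost verbatim.

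For the inconsistent case you overcomplicate matters relative to the paper. The paper does not try to show that two minimal inconsistent pairs ``generate the same quasiclosure,'' nor does it invoke a case analysis with (CC1), (CC2) or the local modular-lattice structure of $I_x,I_y$. Instead it argues both directions directly. Forward: if $S$ is inconsistent pseudoclosed, pick a minimal inconsistent pair $(x,y)$ in $S$ (this exists because quasiclosedness forces $\phi(a)\subseteq S$ for every $a\in S$, so one can descend); then $c^{\bullet}(\phi(x)\cup\phi(y))\subseteq c^{\bullet}(S)=S$ is properly quasiclosed with nonexistent closure, and minimality of $S$ gives equality. Reverse: for $S=c^{\bullet}(\phi(x)\cup\phi(y))$ and any quasiclosed $S'\subseteq S$ with $c(S')$ nonexistent, the paper asserts that minimality of $x\smile y$ together with (IC2) yields $x,y\in S'$, whence $\phi(x)\cup\phi(y)\subseteq S'$ and $S\subseteq S'$. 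So the ``mixed candidates'' and collinearity interactions you worry about in your final paragraph are not engaged at all in the paper's argument; your proposed detour through matching quasiclosures of different minimal pairs is unnecessary for the route the paper takes, and the explicit Arias--Balc\'azar framing, while harmless, is also not used in this particular proof.
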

\begin{proof}
First we characterize consistent pseudoclosed sets. 
For convenience, we denote $\Csub{\PPIP{L}}$ by $\mathcal{F}$.
Let $S$ be a consistent pseudoclosed set of $\mathcal{F}$.
We prove that $S$ is of the first or second form in the statement by restricting $\mathcal{F}$ to $c(S)$.
The existence of $c(S)$ follows from the consistency of $S$ and Lemma \ref{lem:inductiveConstruction}.
Then the closure system $\mathcal{F} \restriction_{c(S)}$ is a modular lattice.
We can easily check that $F \in \mathcal{F} \restriction_{c(S)}$ is a pseudoclosed set of $\mathcal{F} \restriction_{c(S)}$ if and only if $F$ is a pseudoclosed set of $\mathcal{F}$.  
Hence $S$ is of the first or second form in the statement by Theorem \ref{theo:pseudoclosedModular}.
Conversely, let $S$ be the subset of the first or second form in the statement.
We can show that $S$ is pseudoclosed in a similar way. 
In the case where $S$ is of the first form, restrict $\mathcal{F}$ to $\phi(p)$.
Otherwise, restrict it to $\phi(x)$.

Next we characterize inconsistent pseudoclosed sets.
Let $S$ be an inconsistent pesudoclosed set of $\mathcal{F}$.
We prove that $S$ is of the third form in the statement.
Let $x, y$ be a minimal inconsistent pair in $S$.
Since $S$ is quasiclosed, it includes $c(\{x\})$ and $c(\{y\})$, that is, $\phi(x)$ and $\phi(y)$.
By the monotonicity and idempotency of $c^{\bullet}$, we have $c^{\bullet}(\phi(x) \cup \phi(y)) \subseteq c^{\bullet}(S) = S$.
Notice that $c^{\bullet}(\phi(x) \cup \phi(y))$ is properly quasiclosed and $c\left( c^{\bullet}(\phi(x) \cup \phi(y)) \right)$ does not exist.
By the minimality of $S$, the equality $S = c^{\bullet}(\phi(x) \cup \phi(y))$ holds.
We have thus shown that $S$ is of the third form in the statement.
Conversely, let $S = c^{\bullet}(\phi(x) \cup \phi(y))$ be a subset of the third form in the statement.
Since $X^{\bullet}$ is the minimum quasiclosed set including $X$, the subset $S$ is quasiclosed.
Furthermore, $S$ is properly quasiclosed since $c(S)$ does not exist.
We finally show that $S$ is pseudoclosed.
Suppose that $S' \subseteq S$ is quasiclosed and $c(S')$ does not exist.
The minimality of $x \smile y$ implies that $x$ and $y$ are in $S'$ by (IC2).
We can see that $\phi(x) \cup \phi(y) \subseteq S'$ and that $S = c^{\bullet}(\phi(x) \cup \phi(y)) \subseteq S'$ by the same argument as above.
This implies the minimality of $S$.
Thus $S$ is pseudoclosed.
\end{proof}
\begin{proof}[Proof of Theorem \ref{thm:pesudoclosedSetForSemiModular}]
We first prove that the set of implications in the statement is indeed an implicational base of $\mathcal{F}$. 
For convenience, we denote $\Csub{\PPIP{L}}$ by $\mathcal{F}$, and the collection of implications in the statement by $\Sigma$.
Let $c$ be the closure operator of $\mathcal{F}$ and $c'$ that of $\mathcal{F}(\Sigma)$.
We show by case analysis for the consistency that $c$ equals $c'$.
Let $S$ be an inconsistent subset of $\Lirr$.
Then $c'(S)$ does not exist since $\Sigma$ contains an implication of the third form.
Thus $c = c'$ for every inconsistent subset.
Let $S$ be a consistent subset of $\Lirr$.
Since every $F \in \mathcal{F}$ satisfies all implications in the statement, $\mathcal{F} \subseteq \mathcal{F}(\Sigma)$.
Therefore $c'(S) \subseteq c(S)$.
We prove $c(S) = c'(S)$ by the restriction of $\mathcal{F}$ to $c(S)$.
Wild \cite{wild2000optimal} showed that restriction of implications in the statement to $c(S)$ is indeed an implicational base of $\mathcal{F} \restriction_{c(S)}$
Therefore $c(F) = c'(F)$ for any $F \subseteq S$.
In particular, $c(S) = c'(S)$.
We have thus proved that $c$ is equal to $c'$.

We next prove the optimality of the implicational base in the statement.
Every implication above corresponds to a pseudoclosed set characterized in Theorem \ref{theo:pseudoclosedSemi}.
Hence, by Theorem \ref{thm:GDbaseSemi}, it suffices to show that the cardinality of the premise and conclusion is minimum for every implication above.

Let us show that the cardinalities of premises are minimum.
Note that no premises are empty since the minimum element of $\mathcal{F}$ is $\emptyset$.
Hence the cardinality of the premise is minimum for every implications of the first form.
Every implication of the second form corresponds to the pesudoclosed set $\phi(x_i) \cup \phi(x_j)$, 
where $x_i$ and $x_j$ are the intermediate elements of some $\MnElem$-element $x$.
Any implication of singleton premise $\{p\}$ cannot satisfy $c^{\bullet}(\{p\}) = \phi(x_i) \cup \phi(x_j)$ since $c^{\bullet}(\{p\}) = \{p\}$.
Thus the premises of implications of the second form have minimum cardinalities.
We can show that the premises of implications of the third form have minimum cardinalities in a similar fashion.

We finally prove that the cardinalities of conclusions are minimum.
The conclusions of implications of the first form have minimum cardinalities since every irreducible decomposition has the minimum length \cite[Proposition 2.23]{aigner2012combinatorial}.
That of the second and the third form clearly have minimum cardinalities.
\end{proof}
\subsection{Recognition algorithm}
\label{sec:6.1}
Here we present a recognition algorithm deciding whether a $\cap$-closed family given by implications is a modular semilattice.
Our algorithm is a natural extension of Herrmann and Wild's algorithm \cite{herrmann1996polynomial} deciding whether a closure system given by implications is a modular lattice.
\begin{thm}
\label{thm:RecognizeSemiModular}
Let $\Sigma$ be a family of implications on $E$.
We can decide whether or not $\mathcal{F}(\Sigma)$ is a modular semilattice in polynomial time.
\end{thm}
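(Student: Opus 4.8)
The plan is to mimic the Herrmann--Wild strategy for modular lattices, adapting it along the two axes in which a modular semilattice differs from a modular lattice: the join need not always exist, and the recovered structure must be a PPIP (not merely a projective ordered space). First I would reduce $\Sigma$ to the form needed to talk about $\Lirr$ concretely: without loss of generality we may assume $\Sigma$ is \emph{simple}, since a non-simple $\Sigma$ can be cleaned up in polynomial time (identify equivalent elements, discard elements $e$ with $c(\{e\})$ nonexistent or equal to $c(\emptyset)$), and $\mathcal{F}(\Sigma)$ is a modular semilattice if and only if the simplified family is. Under simplicity, $E$ is identified with $\mathcal{F}^{\mathrm{ir}}(\Sigma)=\Lirr$, and all of $\Csub{\PPIP{L}}$, the order, the induced inconsistency relation $\smile$, and the induced collinearity relation $C$ on $\Lirr$ become computable in polynomial time from $\Sigma$ (e.g.\ $p\le q$ iff $p\in c(\{q\})$; $p\smile q$ iff $c(\{p,q\})$ does not exist; $C(p,q,r)$ iff they are pairwise incomparable and $c(\{p,q\})=c(\{q,r\})=c(\{r,p\})$ all exist and are equal).

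Next I would run the following checks. (Step A) Verify that $\mathcal{F}(\Sigma)$ is $\cap$-closed --- automatic from the definition of $\mathcal{F}(\Sigma)$ --- and that it has a minimum element $\emptyset$, which is the semilattice bottom. (Step B) Verify that $\smile$ is an inconsistency relation, i.e.\ that (IC1) and (IC2) hold, and that $C$ is a collinearity relation, i.e.\ (CT1) and (CT2) hold; each is a polynomially checkable condition on triples/pairs of elements of $\Lirr$. (Step C) Check the three PPIP axioms on $(\Lirr,\le,\smile,C)$: Regularity, weak Triangle, and Consistent-Collinearity. Consistent-Collinearity (CC1), (CC2) is a direct quantifier check over triples and single elements. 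Regularity and weak Triangle are $\exists\forall$-type statements over boundedly many elements of $\Lirr$, hence checkable in polynomial time by brute enumeration over the relevant tuples. (Step D) If all checks pass, invoke Theorem \ref{theo:BirkhoffSemiModular}(2): $(\Lirr,\le,\smile,C)$ is a PPIP, so $\Csub{\PPIP{L}}$ is a modular semilattice; and conversely, if $\mathcal{F}(\Sigma)$ is a modular semilattice then by Theorem \ref{theo:BirkhoffSemiModular}(1) it is isomorphic to $\Csub{\PPIP{\mathcal{F}(\Sigma)}}$, whose underlying PPIP is exactly $(\Lirr,\le,\smile,C)$, so all checks necessarily pass. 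This gives a correct decision procedure.

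The soundness direction of Step D is the part requiring care, and I expect it to be the main obstacle. The subtlety is the same one Herrmann and Wild faced: even if the induced structure on $\Lirr$ satisfies all PPIP axioms, one must confirm that the \emph{family $\mathcal{F}(\Sigma)$ itself} --- not some abstractly reconstructed family --- coincides with $\Csub{P}$ for the constructed PPIP $P=(\Lirr,\le,\smile,C)$. The danger is that $\mathcal{F}(\Sigma)$ could fail to be a ``standard context'' for $\Lirr$, i.e.\ the map $F\mapsto F$ from $\mathcal{F}(\Sigma)$ to subsets of $\Lirr$ might not land exactly in $\Csub{P}$. I would handle this by first checking that every $F\in\mathcal{F}(\Sigma)$ is an ideal of $\le$ and a consistent subspace (polynomially: it suffices to verify, on the implications of $\Sigma$ and on a generating set of $\mathcal{F}(\Sigma)$, that $\mathcal{F}(\Sigma)\subseteq\Csub{P}$) and conversely that each principal ideal $I_p$ lies in $\mathcal{F}(\Sigma)$; combined with the join formula of Lemma \ref{lem:joinSemiModular} and Lemma \ref{lem:joinIrrInPPIP}, these two inclusions force $\mathcal{F}(\Sigma)=\Csub{P}$ by comparing $\vee$-irreducibles. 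The remaining work is then purely bookkeeping: bounding the number of tuples inspected in the Regularity and weak Triangle checks (each involves a constant number of elements, so $O(|E|^{O(1)})$ many) and bounding the cost of each closure computation, giving an overall polynomial bound.
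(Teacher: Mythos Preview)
Your overall plan---compute the induced order, inconsistency, and collinearity on $\Lirr$ and test the PPIP axioms---is close in spirit to the paper's, but Step~D contains a genuine gap. The inference ``$\mathcal F(\Sigma)\subseteq\Csub{P}$ and every $I_p\in\mathcal F(\Sigma)$, hence $\mathcal F(\Sigma)=\Csub{P}$ by comparing $\vee$-irreducibles'' is invalid: a sub-$\wedge$-semilattice of $\Csub{P}$ that contains all the $\vee$-irreducibles of $\Csub{P}$ need not equal $\Csub{P}$, because the \emph{joins} can differ. Concretely, take $E=\{a,b,c\}$ and $\Sigma=\{\,\{a,b,c\}\to\emptyset\,\}$. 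Then $\mathcal F(\Sigma)=2^E\setminus\{E\}$; the induced structure on $\Lirr=\{a,b,c\}$ is the trivial antichain with empty $\smile$ and empty $C$, which vacuously satisfies every PPIP axiom; $\Csub{P}=2^E$; every $I_p=\{p\}$ lies in $\mathcal F(\Sigma)$; and $\mathcal F(\Sigma)\subseteq\Csub{P}$. All of your proposed checks pass, yet $\mathcal F(\Sigma)$ is \emph{not} a modular semilattice: the atoms $\{a\},\{b\},\{c\}$ have pairwise joins but no common join. A companion example $\Sigma=\{\,\{a,b\}\to\{c\}\,\}$ exhibits the same failure on the lattice side: the induced PPIP is again trivial, but $\mathcal F(\Sigma)$ is a seven-element lattice containing an $N_5$.

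What is missing is a verification that the closure operator of $\mathcal F(\Sigma)$ actually agrees with the consistent-subspace closure of $P$ (e.g.\ that $c(\{p,q\})$ equals the set produced by the formula of Lemma~\ref{lem:joinSemiModular} for every consistent pair), not merely the two containments you list. The paper does not try to certify $\mathcal F(\Sigma)=\Csub{P}$ at all. Instead it splits the decision into the two defining clauses of a modular semilattice and handles them separately: for the join-existence clause it proves Lemma~\ref{lem:InconsistentImplication}, which converts it into polynomially many conditions on the implications of $\Sigma$ themselves (every improper premise already contains an inconsistent pair, and inconsistency propagates backwards through implications); for the clause that each principal ideal is modular it restricts to that ideal and invokes the Herrmann--Wild recognition algorithm for modular lattices as a black box. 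Your PPIP-axiom checks on $\Lirr$ alone do not substitute for either of these steps, as the first counterexample shows for the join clause and the second shows for the modularity clause.
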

The rest of this section is devoted to the proof of this theorem.
Let $\Sigma$ be a family of implications on $E$.
An operator $i_{\Sigma}$ on $\power{E}$ is defined by $i_{\Sigma}(P) := P \cup \bigcup \{B \mid (A \rightarrow B) \in \Sigma, A \subseteq P\}$.
Let $i^n_{\Sigma}$ denote the $n$ times composition of $i_{\Sigma}$. 

\begin{lem}
\label{lem:calcClosure}
Let $\Sigma$ be a family of implications on $E$ and $c$ the closure operator of $\mathcal{F}(\Sigma)$.
For any $X \subseteq E$, we can compute $c(X)$ in $O(s(\Sigma))$-time.
\end{lem}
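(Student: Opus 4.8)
The plan is to adapt the classical linear-time forward-chaining (unit-propagation) algorithm for Horn formulas, together with one extra flag to cope with improper implications, i.e. with the possibility that $c(X)$ does not exist. First I would record the correctness skeleton. Since $X \subseteq i_{\Sigma}(X) \subseteq i_{\Sigma}^{2}(X) \subseteq \cdots$ is an increasing chain inside the finite set $E$, it stabilizes, and $P^{\ast} := i_{\Sigma}^{|E|}(X)$ is the least superset of $X$ closed under every proper implication of $\Sigma$ (note that $i_{\Sigma}$ effectively ignores improper implications, their conclusion being empty). If some improper implication $A \to \emptyset$ satisfies $A \subseteq P^{\ast}$, then any $F \in \mathcal{F}(\Sigma)$ with $X \subseteq F$ would contain $P^{\ast} \supseteq A$, contradicting $A \not\subseteq F$; hence $c(X)$ is undefined. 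Otherwise $P^{\ast} \in \mathcal{F}(\Sigma)$, and being contained in every member of $\mathcal{F}(\Sigma)$ that contains $X$, it equals $c(X)$. So it suffices to compute $P^{\ast}$ while watching whether an improper implication gets triggered.

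For the linear-time implementation I would use the standard bookkeeping. For each $e \in E$ build the list $\mathrm{occ}(e)$ of implications $(A \to B) \in \Sigma$ with $e \in A$; for each implication keep an integer counter initialised to $|A|$; maintain a Boolean membership array for the current set $P$ and a processing queue. Initialise the queue and the array with the elements of $X$ that occur in some premise of $\Sigma$ (the remaining elements of $X$ are simply part of the output and trigger nothing). When an element $e$ is popped, scan $\mathrm{occ}(e)$, decrementing the counter of each implication listed; when a counter reaches $0$, its premise is contained in $P$, so the implication fires: if its conclusion is empty, report ``undefined'' and halt; otherwise push every not-yet-marked element of the conclusion, marking it. When the queue empties, the marked set together with the untouched part of $X$ is $P^{\ast}$, which is returned as $c(X)$.

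For the running time I would argue that building $\mathrm{occ}$ and the counters costs $O\bigl(\sum_{(A\to B)\in\Sigma}|A|\bigr)$; that each element enters and leaves the queue at most once, so the total cost of scanning occurrence lists is $\sum_{e}|\mathrm{occ}(e)| = \sum_{(A\to B)}|A|$; and that each conclusion is scanned exactly once, when its implication fires, for a total of $\sum_{(A\to B)}|B|$. All three sums are $O(s(\Sigma))$, and elements of $E$ not appearing in $\Sigma$ require no work beyond being copied into the answer, so the whole procedure runs in $O(s(\Sigma))$ time. I do not anticipate a real obstacle here; the only point that needs a little care, relative to the closure-system case, is precisely the $\cap$-closed setting, where $c(X)$ may be undefined — this is handled by the ``undefined'' flag and by the observation that it suffices to test membership of the single set $P^{\ast}$ in $\mathcal{F}(\Sigma)$ rather than to intersect all closed supersets of $X$.
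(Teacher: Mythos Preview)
Your proof is correct. Both you and the paper ultimately rely on linear-time forward chaining for Horn implications, but the packaging differs. The paper does not re-derive the algorithm: it applies the Arias--Balc\'azar reduction $\Sigma \mapsto \Sigma'$ already introduced in Section~4.1 (replace each improper $A \to \emptyset$ by $A \to \{\perp\}$ and add $\{\perp\} \to E'$), notes that $s(\Sigma') = O(s(\Sigma))$, cites Maier's textbook for the closure-system case to compute $c'(X)$, and then reads off $c(X)$ by testing whether $\perp \in c'(X)$. Your version is more self-contained: you spell out the counter/occurrence-list data structures and simply halt when an improper implication fires, without introducing the auxiliary element $\perp$. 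The paper's route is shorter because it reuses machinery already in place and a black-box citation; yours gains by being explicit and by sidestepping the extra implication $\{\perp\} \to E'$ of size $|E|+2$, which is what forces the paper's claim $s(\Sigma') = O(s(\Sigma))$ to tacitly assume $|E| = O(s(\Sigma))$.
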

\begin{proof}
Let $c'$ be the closure operator of the reduced closure system $\mathcal{F}(\Sigma')$.
Notice that the size of $\Sigma'$ is $O(s(\Sigma))$.
It is known that we can compute $c(X)$ if $\Sigma$ includes no improper implications \cite[p. 65]{maier1983theory}.
Hence we can compute $c'(X)$ in $O(s(\Sigma))$-time.
Then $c(X)$ equals $c'(X)$ if $c'(X) \subseteq E$, and $c(X)$ does not exist if $\perp \in c'(X)$.
\end{proof}
\begin{lem}
\label{lem:implication2PPIP}
Let $\Sigma$ be a family of implications.
We can compute $(\mathcal{F}^{\mathrm{ir}}(\Sigma), \leq, \smile, C)$ in $O(s(\Sigma)|E|^3)$-time, where $\smile$ and $C$ are the induced inconsistency and collinearity relation of the semilattice $\mathcal{F}(\Sigma)$, respectively.
\end{lem}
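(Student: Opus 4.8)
The plan is to compute the whole structure through the closure operator $c$ of $\mathcal{F}(\Sigma)$, which by Lemma~\ref{lem:calcClosure} can be evaluated in $O(s(\Sigma))$ time, and never to list closed sets explicitly. First I would compute the principal closed sets $F_e := c(\{e\})$ for all $e\in E$, at a cost of $O(s(\Sigma)|E|)$. Since every closed set $F$ satisfies $F=\bigcup_{e\in F}F_e$, the $\vee$-irreducible closed sets all occur among the $F_e$; discarding duplicates leaves at most $|E|$ candidates for $\mathcal{F}^{\mathrm{ir}}(\Sigma)$.

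To select the $\vee$-irreducible ones I would use the criterion that $F_e$ is $\vee$-irreducible in $\mathcal{F}(\Sigma)$ if and only if
\[
c\!\left(\textstyle\bigcup\{\,F_a : a\in F_e,\ F_a\subsetneq F_e\,\}\right)\subsetneq F_e .
\]
The justification is short: the closed sets strictly contained in $F_e$ are exactly the $F_a$ with $a\in F_e$ and $F_a\subsetneq F_e$ (if $G\subsetneq F_e$ is closed and $x\in G$ then $F_x\subseteq G\subsetneq F_e$); all of them lie below the common upper bound $F_e$, so their join exists in the semilattice and is precisely the left-hand side above, whence $F_e$ is $\vee$-reducible exactly when this join equals $F_e$. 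Each test costs one closure evaluation plus $O(|E|^2)$ set operations, so this step runs in $O(s(\Sigma)|E|+|E|^3)$ time; comparing the surviving $F_e$ pairwise then yields the inclusion order $\leq$ on $\mathcal{F}^{\mathrm{ir}}(\Sigma)$ in $O(|E|^3)$ time.

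For the two remaining relations I would simply transcribe the definitions of the induced inconsistency and collinearity relations from Sections~\ref{sec:3.1} and \ref{sec:3.2}. For $\vee$-irreducibles $F_p,F_q$ one has $F_p\smile F_q$ iff $F_p\vee F_q$ does not exist, i.e.\ iff $c(\{p,q\})$ is undefined (note $c(\{p,q\})=c(F_p\cup F_q)$); running this over all pairs costs $O(s(\Sigma)|E|^2)$. Likewise $C(F_p,F_q,F_r)$ holds iff $F_p,F_q,F_r$ are pairwise incomparable (read off from $\leq$) and $c(\{p,q\})$, $c(\{q,r\})$, $c(\{r,p\})$ are all defined and mutually equal; iterating over the at most $|E|^3$ triples of $\vee$-irreducibles with $O(1)$ closure evaluations each gives $O(s(\Sigma)|E|^3)$ time, which dominates the total and yields the stated bound.

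The one genuinely non-routine point is the $\vee$-irreducibility test: the obstacle is that the closed sets below $F_e$ can be exponentially many, so one cannot enumerate them, and the remedy is the observation that their join is recovered by a single application of $c$ to the union of the already-computed principal closed sets lying strictly below $F_e$. Everything else — correctness of the $\smile$- and $C$-computations and the time accounting — is then a direct consequence of the definitions of the induced relations together with Lemma~\ref{lem:calcClosure}.
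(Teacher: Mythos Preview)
Your proof is correct and follows essentially the same approach as the paper: compute the principal closures $c(\{e\})$, identify which are $\vee$-irreducible, and then obtain $\leq$, $\smile$, and $C$ by brute force using Lemma~\ref{lem:calcClosure}. The paper's own proof is much terser---it simply asserts that one ``checks whether each $c(\{x\})$ is $\vee$-irreducible'' and that the remaining relations are found ``by a brute force search''---whereas you spell out an explicit and correct criterion for $\vee$-irreducibility (the single closure of the union of the strictly smaller principal closed sets), which is exactly the detail the paper leaves to the reader.
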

\begin{proof}
We first enumerate the \joinIrr elements of $\mathcal{F}(\Sigma)$.
Since $c(X) = \bigvee_{x \in X} c(\{x\})$, \joinIrr elements are of the form $c(\{x\})$ for some $x \in X$.
Thus it suffices to compute $c(\{x\})$ for each $x \in X$ and to check whether they are indeed \joinIrrLast{} elements.
The partial order, inconsistency relation, and collinearity relation computed by a brute force search on the enumerated $\vee$-irreducible elements.
By Lemma \ref{lem:calcClosure}, it is clear that this whole process can be done in $O(s(\Sigma)|E|^3)$-time.
\end{proof}
\begin{lem}
\label{lem:closureEquation}
Let $\Sigma$ be a family of implications on $E$ and $c$ be the closure operator of $\mathcal{F}(\Sigma)$.

\begin{itemize}
\item[{\rm (1)}] For any consistent $X \subseteq E$, its closure $c(X)$ is given by $c(X) = X \cup i_{\Sigma}^{1}(X) \cup i_{\Sigma}^{2}(X) \cup \cdots$.
\item[{\rm (2)}] For any inconsistent $X \subseteq E$, there exist $k \in \mathbb{N}$ and an improper implication $(A \rightarrow \emptyset) \in \Sigma$ such that $A \subseteq i_{\Sigma}^{k}(X)$. 
\end{itemize}
\end{lem}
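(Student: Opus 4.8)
The plan is a standard forward-chaining (closure-by-iteration) argument. The one point to watch is that $i_{\Sigma}$ is blind to improper implications: an implication $A \to \emptyset$ contributes the empty set to the union defining $i_{\Sigma}$, so it can never change the value of $i_{\Sigma}(P)$. This blindness is exactly what makes the consistent/inconsistent dichotomy appear in the right place.

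First I would record that $i_{\Sigma}$ is extensive ($P \subseteq i_{\Sigma}(P)$) and monotone ($P \subseteq Q$ implies $i_{\Sigma}(P) \subseteq i_{\Sigma}(Q)$), both immediate from the definition. Since $E$ is finite, the increasing chain $X \subseteq i_{\Sigma}(X) \subseteq i_{\Sigma}^{2}(X) \subseteq \cdots$ stabilizes; write $i_{\Sigma}^{\infty}(X)$ for its union, so that $i_{\Sigma}^{\infty}(X) = i_{\Sigma}^{N}(X)$ for some $N$ and $i_{\Sigma}(i_{\Sigma}^{\infty}(X)) = i_{\Sigma}^{\infty}(X)$. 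The key elementary fact is the characterization of fixed points: $i_{\Sigma}(Y) = Y$ holds if and only if $B \subseteq Y$ for every \emph{proper} implication $(A \to B) \in \Sigma$ with $A \subseteq Y$. Thus $i_{\Sigma}^{\infty}(X)$ respects every proper implication, whereas whether it respects the improper ones is precisely what distinguishes the two cases of the lemma.

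For part (1), suppose $X$ is consistent and fix some $F_{0} \in \mathcal{F}(\Sigma)$ with $X \subseteq F_{0}$. A one-line induction on $n$ --- using that $F_{0}$ respects all proper implications while improper ones contribute nothing to $i_{\Sigma}$ --- gives $i_{\Sigma}^{n}(X) \subseteq F_{0}$ for all $n$; running the same induction over an arbitrary $F \in \mathcal{F}(\Sigma)$ with $X \subseteq F$ shows $i_{\Sigma}^{\infty}(X) \subseteq \bigcap \{F \in \mathcal{F}(\Sigma) \mid X \subseteq F\} = c(X)$. For the reverse inclusion I would verify that $i_{\Sigma}^{\infty}(X)$ is itself closed: it respects every proper implication by the fixed-point property, and for every improper implication $A \to \emptyset$ we have $A \not\subseteq F_{0}$, so $A \not\subseteq i_{\Sigma}^{\infty}(X)$ because $i_{\Sigma}^{\infty}(X) \subseteq F_{0}$. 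Hence $i_{\Sigma}^{\infty}(X) \in \mathcal{F}(\Sigma)$ is a closed set containing $X$, which forces $c(X) \subseteq i_{\Sigma}^{\infty}(X)$. The two inclusions give $c(X) = i_{\Sigma}^{\infty}(X) = X \cup i_{\Sigma}^{1}(X) \cup i_{\Sigma}^{2}(X) \cup \cdots$.

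For part (2), suppose $X$ is inconsistent and consider $i_{\Sigma}^{\infty}(X) = i_{\Sigma}^{N}(X)$. It respects every proper implication; if it also respected every improper implication it would lie in $\mathcal{F}(\Sigma)$ and contain $X$, contradicting inconsistency. Therefore some improper implication $(A \to \emptyset) \in \Sigma$ satisfies $A \subseteq i_{\Sigma}^{\infty}(X) = i_{\Sigma}^{N}(X)$, and $k := N$ works. I do not expect a genuine obstacle; the only care needed is to keep the proper/improper bookkeeping straight, since $i_{\Sigma}$ reacts only to proper implications while closedness additionally rules out improper ones.
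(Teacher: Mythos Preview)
Your proof is correct. It differs from the paper's argument in method, though not in difficulty.

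The paper does not argue directly. Instead it invokes the Arias--Balc\'azar reduction $\Sigma \mapsto \Sigma'$ (replacing each improper implication $A \to \emptyset$ by $A \to \{\perp\}$ and adding $\{\perp\} \to E'$) so that $\Sigma'$ has no improper implications, then cites the standard forward-chaining identity $c'(X) = \bigcup_n i_{\Sigma'}^n(X)$ for ordinary closure systems. Part (1) follows because when $X$ is consistent, $c'(X) \subseteq E$, so none of the iterates meet $\perp$ and $i_{\Sigma'}^n(X) = i_{\Sigma}^n(X)$ for all $n$. Part (2) follows by looking at the last iterate before $\perp$ enters. Your argument instead works with $i_\Sigma$ directly, keeping track of the proper/improper distinction by hand: you show the fixed point $i_\Sigma^\infty(X)$ respects all proper implications automatically, and then use a single witness $F_0 \supseteq X$ in $\mathcal{F}(\Sigma)$ both to bound $i_\Sigma^\infty(X)$ from above and to certify that no improper premise can be contained in it. What your approach buys is self-containment---no appeal to the reduced system or to an external result---while the paper's approach is shorter because the reduction machinery and the classical identity have already been set up earlier in the section.
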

\begin{proof}
It is known that $c(X) = X \cup i_{\Sigma}^{1}(X) \cup i_{\Sigma}^{2}(X) \cup \cdots$ if $\Sigma$ contains no improper implications.
Thus $c'(X) =  X \cup i_{\Sigma'}^{1}(X) \cup i_{\Sigma'}^{2}(X) \cup \cdots$, where $c'$ is the closure operator of $\mathcal{F}(\Sigma')$.
In the case where $X$ is consistent, $c'(X) \subseteq E$.
Hence all $i^{n}_{\Sigma'}(X)$ are contained in $E$ and equals $i^{n}_{\Sigma}(X)$.
Thus we have $c(X) = X \cup i_{\Sigma}^{1}(X) \cup i_{\Sigma}^{2}(X) \cup \cdots$.
This completes the proof of (1).
In the case where $X$ is inconsistent, $\perp \in E$.
Let $k$ be the maximum positive integer such that $\perp \not \in i^k_{\Sigma'}(X)$.
It follows from the definition of $k$ and $i_{\Sigma}$ that $i_{\Sigma'}^{k}(X)$ contains the premise of an improper implication in $\Sigma$.
Since $i_{\Sigma'}^{k}(X) \subseteq E$, we have $i_{\Sigma}^{k}(X) = i_{\Sigma'}^{k}(X)$.
Thus there exist $k \in \mathbb{N}$ and an improper implication $(A \rightarrow \emptyset) \in \Sigma$ such that $A \subseteq i_{\Sigma}^{k}(X)$.
This completes the proof of (2).
\end{proof}
For simplicity, we assume that $\Sigma$ satisfies the following condition:
For any $e \in E$, the closure $c(\{e\})$ exist.
This assumption loses no generality.
If $c(\{e\})$ does not exist, remove $e$ from $E$.
A pair $e_1, e_2 \in E$ is called an \textit{inconsistent pair} if $c(\{e_1, e_2\})$ does not exist.
\begin{lem}
\label{lem:InconsistentImplication}
Let $\Sigma$ be a family of implications on $E$ and $c$ the closure operator of $\mathcal{F}(\Sigma)$.
Then the following conditions are equivalent:
\begin{itemize}
  \item[{\rm(1)}] For any $X,Y,Z \subseteq \mathcal{F}(\Sigma)$, the join $X \vee Y \vee Z$ exists if $X \vee Y$, $Y \vee Z$, and $Z \vee X$ exist. 
  \item[{\rm (2)}] For any $X \subseteq E$, the closure $c(X)$ does not exist if and only if $X$ has an inconsistent pair.
  \item[{\rm (3)}] $\Sigma$ satisfies the following:
  \begin{itemize}
    \item[{\rm (i)}] For any improper implication $(A \rightarrow \emptyset) \in \Sigma$, the premise $A$ contains an inconsistent pair.
    \item[{\rm (ii)}] For any $(A \rightarrow B), (A' \rightarrow B') \in \Sigma$, if the set $A \cup A' \cup B \cup B'$ contains an inconsistent pair, then  so does $A \cup A'$.
    \item[{\rm (iii)}] For any $(A \rightarrow B) \in \Sigma$ and $e \in E$,  if the set $A \cup B \cup \{e\}$ contains an inconsistent pair, then so does $A \cup \{e\}$.
  \end{itemize}
\end{itemize}
\end{lem}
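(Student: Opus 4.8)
The plan is to establish the two equivalences $(1)\Leftrightarrow(2)$ and $(2)\Leftrightarrow(3)$, taking $(2)$ as the hub. Three facts will be used throughout: a closed set is consistent (it is itself a closed set containing each of its pairs); $c(\{e_1,e_2\})$ fails to exist exactly when $\{e_1,e_2\}$ is an inconsistent pair; and, by the standing assumption, $c(\{e\})$ exists for every $e\in E$. Note also that the "if'' direction of (2) — an inconsistent pair in $X$ forces $c(X)$ undefined — is immediate from monotonicity of $c$ and uses none of the hypotheses, so the content lies in the converse and in its translation to and from (1) and (3).

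For $(2)\Rightarrow(1)$, given $X,Y,Z\in\mathcal{F}(\Sigma)$ with $X\vee Y$, $Y\vee Z$, $Z\vee X$ all existing, I would show $X\cup Y\cup Z$ has no inconsistent pair, whence $c(X\cup Y\cup Z)=X\vee Y\vee Z$ exists by (2): any two of its elements lie together in one of $X,Y,Z$ or in one of $X\cup Y$, $Y\cup Z$, $Z\cup X$, hence in a common closed set, hence are consistent. For $(1)\Rightarrow(2)$, suppose $X$ has no inconsistent pair; then for all $e,e'\in X$ the joins $c(\{e\})\vee c(\{e'\})=c(\{e,e'\})$ exist in $\mathcal{F}(\Sigma)$, and a Helly-type induction on $|X|$ using (1) — replace $a_1,a_2$ by $a_1\vee a_2$ and observe that its joins with the remaining $a_i$ exist by the ternary axiom — shows $\bigvee_{e\in X}c(\{e\})$ exists; this join is $c(X)$.

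For $(2)\Rightarrow(3)$, each of (i), (ii), (iii) follows by exhibiting a closed set that would be forced to contain an inconsistent pair. For (i): an improper implication $A\to\emptyset$ means no closed set contains $A$, so $c(A)$ is undefined, and (2) yields an inconsistent pair in $A$. For (ii) (resp. (iii)): if $A\cup A'$ (resp. $A\cup\{e\}$) had no inconsistent pair, then by (2) its closure exists; being closed it contains $A,A'$ (resp. $A$), hence $B,B'$ (resp. $B$) by the relevant implications, hence the given inconsistent pair — contradicting that a closed set is consistent.

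The implication $(3)\Rightarrow(2)$ is the heart of the lemma, and I expect the main obstacle. If $X$ is inconsistent, Lemma~\ref{lem:closureEquation}(2) supplies $k$ and an improper implication $A\to\emptyset$ with $A\subseteq i_\Sigma^k(X)$, and (3)(i) gives an inconsistent pair in $A\subseteq i_\Sigma^k(X)$. It then suffices to prove the pull-back statement: \emph{if $i_\Sigma(P)$ contains an inconsistent pair, so does $P$}; induction on $k$ then completes the argument. For the pull-back, take an inconsistent pair $\{e_1,e_2\}\subseteq i_\Sigma(P)$ and split into cases according to whether each $e_i$ lies in $P$ or in the conclusion $B$ of some $A\to B\in\Sigma$ with $A\subseteq P$: the case of two distinct conclusions is handled by (3)(ii), the mixed case by (3)(iii), and $\{e_1,e_2\}$ inside a single conclusion $B$ by (3)(ii) with $A=A'$, $B=B'$ — here one also notes that a conclusion with empty premise lies in the closed set $c(\emptyset)$ and is therefore consistent, so that degenerate subcase does not arise. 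In every case the inconsistent pair is located inside a subset of $P$ of the form $A\cup A'$ or $A\cup\{e_1\}$. The only subtlety is bookkeeping of these edge cases (empty premises, and the existence of $c(\emptyset)$, i.e. $\mathcal{F}(\Sigma)\neq\emptyset$).
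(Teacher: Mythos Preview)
Your proposal is correct and follows essentially the same route as the paper: use (2) as the hub, handle $(2)\Leftrightarrow(1)$ by the observation that an inconsistent pair in $X\cup Y\cup Z$ must land in one of the three pairwise unions, and prove $(3)\Rightarrow(2)$ via Lemma~\ref{lem:closureEquation}(2) together with a one-step pull-back of inconsistent pairs through $i_\Sigma$ using (3)(i)--(iii). The only noticeable difference is in $(1)\Rightarrow(2)$: the paper argues by contraposition, removing one element in three ways to get subsets $X_1,X_2,X_3$ with $|X_i\cup X_j|=|X|-1$, whereas you run a direct Helly-type induction that merges $a_1,a_2$ into $a_1\vee a_2$; both reductions are standard and equally short, and your extra bookkeeping about empty premises and $c(\emptyset)$ is a bit more explicit than the paper's.
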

\begin{proof}
(1) $\Rightarrow$ (2):
Obviously, $c(X)$ does not exist if $X$ has an inconsistent pair.
Conversely, suppose that $c(X)$ does not exist.
We prove that $X$ contains an inconsistent pair by induction on $|X|$. The case $|X| = 2$ is trivial.
In the case $|X| > 2$, let $x_1, x_2, x_3$ be distinct elements in $X$.
Let $X_1 = X \setminus \{x_2, x_3\}$, $X_2 = X \setminus \{x_3, x_1\}$, and $X_3 = X \setminus \{x_1, x_2\}$.
By (1) and the fact that $c(X) = c(X_1) \vee c(X_2) \vee c(X_3)$ does not exist, one of the $c(X_1) \vee c(X_2)$, $c(X_2) \vee c(X_3)$, and $c(X_3) \vee c(X_1)$ does not exist.
In particular, one of the $c(X_1 \cup X_2)$, $c(X_2 \cup X_3)$, and $c(X_3 \cup X_1)$ does not exist.
By induction, $X_1 \cup X_2$, $X_2 \cup X_3$, or $X_3 \cup X_1$ contains an inconsistent pair.
Thus $X$ contains an inconsistent pair.

(2) $\Rightarrow$ (1): 
We prove the contraposition of (1), that is, for any $X,Y,Z \in \mathcal{F}(\Sigma)$, one of the $X \vee Y$, $Y \vee Z$, and $Z \vee X$ does not exist if $X \vee Y \vee Z$ does not exist.
Suppose that $X \vee Y \vee Z = c(X \cup Y \cup Z)$ does not exist.
By (2), the set $X \cup Y \cup Z$ contains an inconsistent pair $a,b$.
Then one of the $X \cup Y$, $Y \cup Z$, and $Z \cup X$ contains $a,b$.
By using (2) again, one of $X \vee Y$, $Y \vee Z$, and $Z \vee X$ does not exist.

(2) $\Rightarrow$ (3): 
The condition (i) follows from (2) since $c(A)$ does not exist.
For the condition (ii), suppose that $A \cup A' \cup B \cup B'$ contains an inconsistent pair.
By (2), the closure $c(A \cup A' \cup B \cup B')$ does not exist.
Since $c(A \cup A') = c(A \cup A' \cup B \cup B')$, the set $A \cup A'$ contains an inconsistent pair by (2).
We can prove the condition (iii) in a similar way.

(3) $\Rightarrow$ (2):
Obviously $c(X)$ does not exist if $X$ has an inconsistent pair.
Conversely, suppose that $c(X)$ does not exist.
By Lemma \ref{lem:closureEquation} (2), there is $k \in \mathbb{N}$ and $(A \rightarrow \emptyset) \in \Sigma$ such that 
$A \subseteq i^k_{\Sigma}(X)$.
We prove that $X$ has an inconsistent pair by induction on $k$.
In the case $k = 0$, there is an improper implication $(A \rightarrow \emptyset) \in \Sigma$ such that $A \subseteq X$.
By (i), the premise $A$ contains an inconsistent pair.
Hence $X$ contains an inconsistent pair.
In the case $k > 0$, the set $i_{\Sigma}(X)$ contains an inconsistent pair $a,b$ by induction.
By the definition of $i_{\Sigma}$, one of the following cases hold:
there are $A \rightarrow B$ and $A' \rightarrow B'$ in $\Sigma$ such that $A \subseteq X$, $ A' \subseteq X$, $a \in B$, and $b \in B'$;
$a \in X$ and there is $(A \rightarrow B) \in \Sigma$ such that $A \subseteq X$ and $b \in B$.
By (3), the set $A \cup A'$ or $A \cup \{a\}$ contains an inconsistent pair respectively.
Hence $X$ contains an inconsistent pair.
\end{proof}
\begin{proof}[Proof of Theorem \ref{thm:RecognizeSemiModular}]
We need to check that $\mathcal{F}(\Sigma)$ satisfies the following two conditions:
(JOIN) for any $X,Y,Z \subseteq \mathcal{F}(\Sigma)$, the join $X \vee Y \vee Z$ exist if $X \vee Y$, $Y \vee Z$, and $Z \vee X$ exist;
(MOD) Every principal ideal is a modular lattice. 
We have proved that (JOIN) is equivalent to the third condition of Lemma \ref{lem:InconsistentImplication}.
The third condition of Lemma \ref{lem:InconsistentImplication} can be checked in $O((|\Sigma|^2|E|^2+ |\Sigma||E|^3) s(\Sigma))$-time.
Suppose that $\mathcal{F}(\Sigma)$ satisfies (JOIN).
We later show that $\mathcal{F}(\Sigma)$ satisfies (MOD) if and only if  $(\mathcal{F}^{\mathrm{ir}}(\Sigma), \leq, \smile, C)$satisfies Regularity and weak Triangle axioms.
The latter condition can be decided in $O(s(\Sigma)|E|^3 + |E|^7 \log|E|)$-time by Lemma \ref{lem:implication2PPIP}, which completes the proof.

We prove that $\mathcal{F}(\Sigma)$ satisfies (MOD) if and only if  $(\mathcal{F}^{\mathrm{ir}}(\Sigma), \leq, \smile, C)$ satisfies Regularity and weak Triangle axioms.
It follows from Theorem \ref{theo:BirkhoffSemiModular} that $(\mathcal{F}^{\mathrm{ir}}(\Sigma), \leq, \smile, C)$ satisfies Regularity and weak Triangle axioms if $\mathcal{F}(\Sigma)$ satisfies (MOD).
Conversely, suppose that  $(\mathcal{F}^{\mathrm{ir}}(\Sigma), \leq, \smile, C)$ satisfies Regularity and weak Triangle axioms.
Let $X \in \mathcal{F}(\Sigma)$.
Notice that weak Triangle axiom is equivalent to Triangle axiom on $X$, where we regard $X$ is the substructure of $(\mathcal{F}^{\mathrm{ir}}(\Sigma), \leq, \smile, C)$.
Notice that $X = (I_X^{\mathrm{ir}}, \leq, \smile, C)$ as a substructure of  $(\mathcal{F}^{\mathrm{ir}}(\Sigma), \leq, \smile, C)$.
Herrmann and Wild \cite{herrmann1996polynomial} proved that, for any lattice $L$, the lattice $L$ is modular if and only if $(\Lirr, \leq, \smile, C)$ satisfies Regularity and Triangle axioms.
Hence $I_X$ is a modular lattice.
We have thus proved that $\mathcal{F}(\Sigma)$ satisfies (MOD). 
\end{proof}
\begin{proof}[Proof of Theorem \ref{thm:polynomialConvertion}]
Since there is a natural bijection between ${\mathcal{F}^{\mathrm{ir}}(\Sigma)}$ and $E$, it suffices to compute an optimal implicational base of $\Csub{\PPIP{\mathcal{F}(\Sigma)}}$, which is characterized in Theorem \ref{thm:pesudoclosedSetForSemiModular}.
Notice that this bijection induces an isomorphism of $\cap$-closed families between $\mathcal{F}(\Sigma)$ and $\Csub{\PPIP{\mathcal{F}(\Sigma)}}$.
Throughout this proof, we identify $E$ with ${\mathcal{F}^{\mathrm{ir}}(\Sigma)}$, and $\mathcal{F}(\Sigma)$ with $\Csub{\PPIP{\mathcal{F}(\Sigma)}}$.

We first compute implications of the first and third form in Theorem \ref{thm:pesudoclosedSetForSemiModular}.
The PPIP-representation $\PPIP{\mathcal{F}(\Sigma)}$ is obtained in polynomial time by Lemma \ref{lem:implication2PPIP}.
Thus we have the premises of implications of the first and third form.
For any \joinIrr element $q$, it is easy to compute the conclusion $B^q$ from the PPIP-representation.

We next compute implications of the second form.
It suffices 
to compute $ \phi(x_i)$ and $\phi(x_j)$ for every pseudoclosed set $Q = \phi(x_i) \cup \phi(x_j)$ of the second form in Theorem \ref{theo:pseudoclosedSemi}.
Let $c$ be the closure operator of $\Csub{\PPIP{\mathcal{F}(\Sigma)}}$.
All pseudoclosed sets are obtained via the \textit{GD-base}~\cite{Guigues1986, maier1983theory} of the reduced closure system.
See also~\cite[p. 386]{herrmann1996polynomial} for the computation of the GD-base from $\Sigma$.
Then we can pick up pseudoclosed sets of the second form 
since a pesudoclosed set $P$ is of the second form if and only if $c(P)$ exists and $P$ is not a \joinIrr element.
Let $Q = \phi(x_i) \cup \phi(x_j)$ be a pesudoclosed set of the second form.
We can compute $\phi(x_i)$ and $\phi(x_j)$ as follows.
Let $\mathcal{P} := \{X \in \power{E} \mid X \text{ is pseudoclosed and } c(X) = c(Q)\}$.
By the definition of $\mathcal{M}_n$-elements,
$\phi(x_i) \cap \phi(x_j) = \bigcap \mathcal{P}$.
Furthermore, the image of the map $\mathcal{P} \ni X \mapsto X \cap Q \in \power{E}$ is $\{\phi(x_i), \phi(x_j), \phi(x_i) \cap \phi(x_j), \phi(x_i) \cup \phi(x_j)\}$.
Therefore we can compute $\phi(x_i)$ and $\phi(x_j)$.
\end{proof}
\section{Application}
\label{sec:7}
In this section, we mention possible applications of our results.
\subsection{Minimizer set of submodular function}
\label{sec:7.1}
The motivation of this paper comes from
\textit{submodular functions} on modular semilattices \cite{Hirai0ext,hirai2016lconvexity}.
This class of functions generalizes submodular set functions as well
as other submodular-type functions,
such as $k$-submodular functions,
and appears from dualities in well-behaved multicommodity flow problems
and related label assignment problems; see also \cite{Hirai2017survery}.

A submodular function on modular semilattice $L$ is 
a function $f:L \to \mathbb{R} \cup \{+\infty\}$ satisfying
\[
f(x) + f(y) \geq f(x \wedge y) + 
\sum_{\theta \in {\cal E}(L)} C (\theta) f( \theta(x, y)) \quad (x,y \in L),
\]
where ${\cal E}(L)$ is a certain set of binary operators on $L$, 
and $C:{\cal E}(L) \to \RR_+$ 
is a probability distribution on ${\cal E}(L)$.
We do not give the detailed definition of ${\cal E}({\cal L})$ and $C$.
An important point here is that 
each operator $\theta$ in ${\cal E}({\cal L})$ is {\em $\vee$-like} 
in the sense that $\theta(x,y) = x \vee y$ holds for any $x,y$ having the join. Thus, if $x$ and $y$ are minimizers of a submodular function, 
then $x \wedge y$ and $\theta(x,y)$ with $C(\theta) > 0$ are minimizers. In addition, if $x$ and $y$ have the join, 
then $x \vee y = \theta(x,y)$ is also a minimizer. 
\begin{lem}
	The minimizer set of a submodular function 
	on a modular semilattice is $(\wedge, \vee)$-closed.
\end{lem}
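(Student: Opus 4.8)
The plan is to exploit the fact that $C$ is a probability distribution together with the $\vee$-likeness of the operators in ${\cal E}(L)$; the whole argument is a single ``tightness'' computation. If $f$ has no minimizer there is nothing to prove, so I would assume the minimum value $\mu := \min f$ is attained, and let $M$ denote the minimizer set. Fix $x, y \in M$, so $f(x) = f(y) = \mu$, and substitute them into the defining submodular inequality
\[
f(x) + f(y) \ge f(x \wedge y) + \sum_{\theta \in {\cal E}(L)} C(\theta)\, f(\theta(x,y)).
\]
Since $\mu$ is the minimum value of $f$, each of $f(x \wedge y)$ and each $f(\theta(x,y))$ is at least $\mu$; moreover $\sum_{\theta \in {\cal E}(L)} C(\theta) = 1$ because $C$ is a probability distribution. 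Hence the right-hand side is bounded below by $\mu + \mu = 2\mu$, which is exactly the left-hand side. (In particular every quantity appearing is finite, so the $+\infty$ value causes no trouble: if some $f(\theta(x,y))$ with $C(\theta)>0$ were $+\infty$, the right-hand side would exceed $2\mu$.) Therefore the inequality is tight, and this forces $f(x \wedge y) = \mu$ as well as $f(\theta(x,y)) = \mu$ for every $\theta \in {\cal E}(L)$ with $C(\theta) > 0$.

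The first equality shows $x \wedge y \in M$, so $M$ is closed under $\wedge$. For closure under $\vee$, suppose $x \vee y$ exists in $L$. Because every operator $\theta \in {\cal E}(L)$ is $\vee$-like, $\theta(x,y) = x \vee y$ for all such $\theta$. Since $C$ is a probability distribution there is at least one $\theta_0$ with $C(\theta_0) > 0$, and for that operator we have just shown $f(x \vee y) = f(\theta_0(x,y)) = \mu$. Hence $x \vee y \in M$, and combining the two cases, $M$ is \joinSub.

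I do not expect a genuine obstacle here. The only points that require a moment of care are (i) observing that $f(x \wedge y)$ and the values $f(\theta(x,y))$ cannot equal $+\infty$, which is immediate from the inequality itself, and (ii) recording that ``$C$ is a probability distribution'' supplies both the normalization $\sum_\theta C(\theta) = 1$ used in the tightness step and the existence of an operator of positive weight, which is precisely what connects the abstract operators $\theta$ to the actual join $x \vee y$ via $\vee$-likeness.
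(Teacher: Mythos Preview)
Your argument is correct and is exactly the tightness computation the paper sketches in the paragraph immediately preceding the lemma: use that $C$ is a probability distribution to see the submodular inequality becomes an equality at a pair of minimizers, forcing $x\wedge y$ and every $\theta(x,y)$ with $C(\theta)>0$ into the minimizer set, and then invoke $\vee$-likeness to pick up $x\vee y$ when it exists. The paper does not give a formal proof beyond that sketch, so your write-up is essentially a fleshed-out version of the same approach.
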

Thus the minimizer set $B$ of a submodular function $f$ 
is represented as a PPIP.
Currently no polynomial time algorithm is known for 
minimizing  this class of submodular functions.
We here consider a typical case where $f$ is defined on 
the $n$-product $L^n$ of a (small) modular semilattice $L$, and is of the form   
\begin{equation}
\label{eq:zivny}
f(\bm{x}) = \sum_{i = 1}^N f_i(\bm{x}[i_1], \bm{x}[i_2], \dots, \bm{x}[i_m]),
\end{equation} 
where each $f_i$ is a submodular function on $L^m$ and 
$m$ is a (small) constant.
In this case, 
$f$ is minimized in time polynomial of $n$, $N$, and $|L|^m$, 
which is a consequence of a general tractability criterion 
by Kolmogorov, Thapper, and \v{Z}ivn\'y \cite{kolmogorov2015the}
for minimizing a function of the form (\ref{eq:zivny}).
Then the membership oracle (MO) of $B$ 
can be obtained from a minimizing oracle of $f$ together with a variable-fixing procedure.
Also, by Theorem~\ref{theo:BirkhoffSemiModular} and \ref{thm:characterizeJoinIrr}, 
the minimizer set $B$ of $f$ is compactly written as 
a PPIP of $O(n|L|)$ elements.
Thus, 
by the PPIP-construction algorithm in Section \ref{sec:5}, 
we obtain the following.
\begin{thm}\label{thm:minimizer}
	Let $L$ be a modular semilattice, 
	and let $f$ be a function on $L^n$ of form $(\ref{eq:zivny})$ such that each $f_i$ is submodular on $L^m$.
	The PPIP-representation of the minimizer set of $f$ 
	is obtained in time polynomial in $n$, $N$, and $|L|^m$.
\end{thm}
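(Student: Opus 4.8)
The plan is to realize the membership oracle (MO) of the minimizer set $B$ of $f$ by minimizing a slightly enlarged instance, and then feed MO into the construction algorithm of Theorem~\ref{thm:calcPPIP}. By the preceding lemma, $B$ is a \joinSub{} subset of $L^n$, hence a modular semilattice (we may assume $B\neq\emptyset$; otherwise the minimization routine below reports infeasibility). Consequently Theorems~\ref{theo:BirkhoffSemiModular}, \ref{thm:upperBound}, \ref{thm:characterizeJoinIrr}, and \ref{thm:calcPPIP} all apply to $B$ and to each projection $\pi_i(B)$, and $\PPIP{B}$ has $O(n|\Lirr|)=O(n|L|)$ \joinIrr elements.

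First I would compute $\mu := \min_{\bm x\in L^n} f(\bm x)$ by one application of the algorithm of Kolmogorov, Thapper, and \v{Z}ivn\'y~\cite{kolmogorov2015the} to $f$, which is of form (\ref{eq:zivny}); this costs time polynomial in $n$, $N$, and $|L|^m$. To answer an MO query $(i,j,l,l')$ I would then minimize the modified objective
\[
f^{i,j}_{l,l'}(\bm x) := f(\bm x) + \delta_l(\bm x[i]) + \delta_{l'}(\bm x[j]),
\]
where $\delta_a\colon L\to\{0,+\infty\}$ equals $0$ at $a$ and $+\infty$ elsewhere. Since $f^{i,j}_{l,l'}\ge f$ pointwise, its minimum is $\ge\mu$, with equality exactly when some $\bm b\in B$ has $\bm b[i]=l$ and $\bm b[j]=l'$; so comparing $\min f^{i,j}_{l,l'}$ with $\mu$ answers the query. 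The key observation is that $\delta_a$ is a submodular function on $L$: the defining inequality is vacuous unless both arguments equal $a$, and for $x=y=a$ it reads $0\ge \delta_a(a)+\sum_{\theta\in\mathcal{E}(L)}C(\theta)\,\delta_a(\theta(a,a))=0$, using $\theta(a,a)=a\vee a=a$ for every \joinLikeOp{} $\theta$. Hence $f^{i,j}_{l,l'}$ is again a sum of submodular functions of arity at most $m$ (taking $m\ge 1$), with $N+2$ summands, so \cite{kolmogorov2015the} minimizes it in time polynomial in $n$, $N$, and $|L|^m$. Thus one MO call costs one such minimization plus a comparison.

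With MO available at this cost, I would run the algorithm of Theorem~\ref{thm:calcPPIP} unchanged: it enumerates all bases $\base{i}{l}$ of $B$ with at most $n^2|L|^2$ MO calls, selects the \joinIrr ones via the criterion of Theorem~\ref{thm:characterizeJoinIrr} (checking whether $l$ is \joinIrr in the modular semilattice $\pi_i(B)$), and then computes the partial order, the inconsistency relation $\smile$, and the collinearity relation $C$ on these $O(n|L|)$ elements in an additional $O(n^3|\Lirr|^3+n^2|L|^2)$ steps, treating operations in the small semilattice $L$ as unit cost. Charging each of the $O(n^2|L|^2)$ MO calls the polynomial cost established above and adding the post-processing gives a total running time polynomial in $n$, $N$, and $|L|^m$, which would establish the theorem.

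The one step needing genuine verification is that appending the point-indicators $\delta_l,\delta_{l'}$ keeps the instance inside the tractable class of \cite{kolmogorov2015the}, that is, the submodularity of $\delta_a$ on the modular semilattice $L$; as indicated, this is immediate from the identity $\theta(a,a)=a$ built into the notion of a \joinLikeOp. Everything else — that $B$ and $\pi_i(B)$ are modular semilattices so the cited theorems apply, and that the construction algorithm's running time is dominated by its MO calls — has already been established in the preceding sections.
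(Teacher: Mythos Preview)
Your proposal is correct and follows essentially the same route as the paper: realize the membership oracle via a variable-fixing procedure (you make this explicit by adding unary point-indicators $\delta_a$ and checking their submodularity from $\theta(a,a)=a$), appeal to \cite{kolmogorov2015the} for polynomial-time minimization of the augmented instance, and then invoke Theorem~\ref{thm:calcPPIP}. The paper gives only the one-line sketch ``MO is obtained from a minimizing oracle of $f$ together with a variable-fixing procedure,'' so your write-up is a faithful and more detailed rendering of exactly that argument.
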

This is an extension of a result of Hirai and Oki \cite{hirai2016lecture} for the case of $k$-submodular functions.

\subsection{Block-triangularization of partitioned matrix}
\label{sec:7.2}
The DM-decomposition of a matrix is 
obtained from a maximal chain of the minimizer set of a submodular function.
Here we apply our results 
to a generalization of the DM-decomposition 
considered by Ito, Iwata, and Murota \cite{ito1994blocktriangularizations}, 
in which a submodular function on a modular lattice plays a key role.

A \textit{partitioned matrix} of \textit{type} $(m_1, m_2, \dots, m_{\mu}; n_1, n_2, \dots, n_{\nu})$ is any matrix $A = (A_{\alpha \beta})$ having a block-matrix structure 
as \[
A = \left(
\begin{array}{cccc}
A_{11} & A_{12} & \ldots & A_{1\nu} \\
A_{21} & A_{22} & \ldots & A_{2\nu} \\
\vdots & \vdots & \ddots & \vdots \\
A_{\mu 1} & A_{\mu 2} & \ldots & A_{\mu \nu}
\end{array}
\right),
\]
where $A_{\alpha \beta}$ is an $m_{\alpha} \times n_{\beta}$ matrix over field ${\bf F}$ for $\alpha \in [\mu]$ and $\beta \in [\nu]$.
Let $m = \sum_{\alpha \in [\mu]} m_{\alpha}$ and $n = \sum_{\beta \in [\nu]} n_{\beta}$.
Ito, Iwata, and Murota \cite{ito1994blocktriangularizations} showed that partitioned matrix $A = (A_{\alpha \beta})$ admits a canonical block-triangular form 
\[
\left(
\begin{array}{cccc}
D_{1} & *  & \ldots & * \\
O & D_{2} & \ddots & \vdots \\
\vdots & \ddots & \ddots &  *\\
O & \ldots & O& D_{\kappa}
\end{array}
\right)
\]
under transformations of form $A \mapsto$
\[
P \left(
    \begin{array}{cccc}
      E_{1} & O & \ldots & O \\
      O & E_{2} & \ddots & \vdots \\
      \vdots & \ddots & \ddots & O \\
      O & \ldots & O& E_{\mu}
    \end{array}
  \right) 
  \left(
    \begin{array}{cccc}
      A_{11} & A_{12} & \ldots & A_{1\nu} \\
      A_{21} & A_{22} & \ldots & A_{2\nu} \\
      \vdots & \vdots & \ddots & \vdots \\
      A_{\mu 1} & A_{\mu 2} & \ldots & A_{\mu \nu}
    \end{array}
  \right)
  \left(
    \begin{array}{cccc}
      F_{1} & O & \ldots & O \\
      O & F_{2} & \ddots & \vdots \\
      \vdots & \ddots & \ddots & O \\
      O & \ldots & O& F_{\nu}
    \end{array}
  \right)
  Q,
\]
where each $*$ is an arbitrary element, $E_{\alpha}$ is a nonsingular $m_{\alpha} \times m_{\alpha}$ matrix, $F_{\beta}$ is a nonsingular $n_{\beta} \times n_{\beta}$ matrix, and $P$ and $Q$ are permutation matrices of sizes $m$ and $n$, respectively.

This block-triangularization is obtained from an optimization 
over subspaces of vector spaces.
A $\mu+\nu$ tuple $(X,Y) = (X_1,X_2,\ldots,X_\mu,Y_1,Y_2,\ldots,Y_\nu)$ of subspaces $X_{\alpha} \subseteq {\bf F}^{m_\alpha}$ for $\alpha \in [\mu]$ and 
$Y_{\beta} \subseteq {\bf F}^{n_\beta}$ for $\beta \in [\nu]$ 
is said to be {\em vanishing} if
\begin{equation}\label{eqn:vanishing}
A_{\alpha \beta}(X_{\alpha},Y_{\beta}) = \{0\} \quad (\alpha \in [\mu],\beta \in [\nu]),
\end{equation}
where $A_{\alpha \beta}$ is regarded as a bilinear form 
$(u,v) \mapsto u^{\top}A_{\alpha \beta} v$.
We simply call such $(X,Y)$ a {\em vanishing subspace}, 
where $X$ is a $\mu$-tuple of subspaces and $Y$ is a $\nu$-tuple of subspaces.

For a vanishing subspace $(X,Y)$, we obtain 
a transformation of the above form so that 
the transformed matrix has a zero submatrix of 
$\sum_{\alpha} \dim X_{\alpha}$ rows and $\sum_{\beta} \dim Y_{\beta}$.
This naturally leads us to the following problem, called  
the {\em maximum vanishing subspace problem} (MVSP).
The goal of MVSP is to
maximize 
\begin{equation}
\sum_{\alpha} \dim X_{\alpha} + \sum_{\beta} \dim Y_{\alpha}
\end{equation}
over all vanishing subspaces $(X,Y)$ for $A$.
MVSP is an algebraic generalization 
of the stable set problem of a bipartite graph, and was implicit in \cite{ito1994blocktriangularizations} and formally introduced by~\cite{hirai2016computing}.

From a maximal chain $(X^0,Y^0), (X^1,Y^1),\ldots, (X^{\kappa},Y^{\kappa})$ 
of maximum vanishing subspaces, 
where $X^i_{\alpha} \subseteq X^{i+1}_{\alpha}$
 and $Y^{i}_{\beta} \supseteq Y^{i+1}_{\beta}$, 
we obtain, via an appropriate change of bases, 
the most refined block-triangular form, which we call the {\em DM-decomposition} of $A$. See \cite{hirai2016computing} for detail.

Currently a polynomial time algorithm to obtain the DM-decomposition is known 
for very restricted classes of partitioned matrices \cite{hirai2016computing,murota1987ccf}.
Just recently, Hamada and Hirai \cite{hamada2017maximum} proved 
that MVSP can be solved in polynomial time.
However this result is not enough to obtain the DM-decomposition, 
since the DM-decomposition needs 
a maximal chain 
of maximum vanishing subspaces and their algorithm outputs one of (special) maximum vanishing spaces.

As remarked in~\cite{hirai2016computing},
MVSP is formulated as a submodular function minimization on 
the product of modular lattices. 
For $\alpha \in [\mu]$, let $L_{\alpha}$ denote 
the family of all subspaces of ${\bf F}^{m_{\alpha}}$, 
and for $\beta \in [\nu]$, let $M_{\beta}$ denote 
the family of all subspaces of ${\bf F}^{n_{\beta}}$.
Both $L_{\alpha}$ and $M_\beta$ are modular lattices with respect to  inclusion/reverse inclusion order, where $L_{\alpha}$ is considered in inclusion order and $M_{\beta}$ is considered in reverse inclusion order.
Define $S_{\alpha \beta}: L_{\alpha} \times M_{\beta} \to {\bf R} \cup \{\infty\}$ 
by $S_{\alpha}(X_{\alpha},Y_{\beta}) := 0$ if $A_{\alpha \beta}(X_{\alpha},Y_{\beta}) = \{0\}$ and $\infty$ otherwise.
Then MVSP is equivalent to 
\begin{eqnarray*}
{\rm Min}. && - \sum_{\alpha} \dim X_{\alpha} - \sum_{\beta} \dim Y_{\beta} 
+ \sum_{\alpha,\beta} S_{\alpha \beta}(X_{\alpha}, Y_\beta) \\
{\rm s.t.} && (X_1,X_2,\ldots,X_\mu,Y_1,Y_2,\ldots,Y_{\nu})  \\
 && \quad \quad \quad \in L_1 \times L_2 \times \cdots \times L_\mu \times M_1 \times M_2 \times \cdots \times M_\nu.  
\end{eqnarray*}
The objective function is of the form~(\ref{eq:zivny}) with $m=2$, 
and is submodular \cite{hirai2016computing}. 

If ${\bf F}$ is a finite field, then $L_{\alpha}$ and $M_{\beta}$
are all finite, and Theorem~\ref{thm:minimizer} is applicable to obtain
the PPIP-representation $\PPIP{B}$ of the minimizer set $B$ of this submodular function.
A maximal chain $\{\subseteqInd{k}\}$ (of consistent subspaces in $\PPIP{B}$) is obtained in a greedy manner:
Let $S^{0} := \emptyset$.
Choose any minimal element $p \in \PPIP{B} \setminus \subseteqInd{k}$, 
let 
\begin{equation*}
\label{eq:funi}
\subseteqInd{k+1} := \subseteqInd{k} \cup \{p\} \cup \{r \in \PPIP{B} \mid \exists q \in \subseteqInd{k} \text{ such that } C(p,q,r) \text{ holds}\}.
\end{equation*}
By Lemma \ref{lem:joinSemiModular}, 
$\subseteqInd{k+1}$ is indeed a consistent subspace, 
and has $\subseteqInd{k}$ as a lower cover.
\begin{thm}
	Let $A = (A_{\alpha \beta})$ be an $m \times n$ partitioned matrix 
	over a finite field ${\bf F}$.
	Then the DM-decomposition of $A$ can be obtained in time polynomial 
	in $m$, $n$, and the number of all vector subspaces 
	of ${\bf F}^{\gamma}$, where $\gamma$ is 
	the maximum number of rows and columns of a submatrix $A_{\alpha \beta}$.
\end{thm}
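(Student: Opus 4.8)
The plan is to derive the theorem by assembling three ingredients already established: the polynomial-time PPIP-construction for minimizer sets (Theorem~\ref{thm:minimizer}), the explicit join/cover description of consistent subspaces (Lemma~\ref{lem:joinSemiModular}), and the change-of-basis construction of the DM-decomposition from a maximal chain of maximum vanishing subspaces~\cite{ito1994blocktriangularizations, hirai2016computing}.

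First I would recast MVSP for $A$ as the submodular minimization recalled above: MVSP is equivalent to minimizing
\[
f(X,Y) = -\sum_{\alpha} \dim X_{\alpha} - \sum_{\beta} \dim Y_{\beta} + \sum_{\alpha, \beta} S_{\alpha \beta}(X_{\alpha}, Y_{\beta})
\]
over $L_1 \times \cdots \times L_{\mu} \times M_1 \times \cdots \times M_{\nu}$, and its minimizer set $B$ is exactly the family of maximum vanishing subspaces. To invoke Theorem~\ref{thm:minimizer} I would either observe that the arguments of Section~\ref{sec:5} and Theorem~\ref{thm:minimizer} go through verbatim for a product of possibly distinct finite modular semilattices, or, to match the stated form, embed everything into $L^{\mu+\nu}$ where $L$ is the (modular) lattice of all subspaces of ${\bf F}^{\gamma}$: since $m_{\alpha}, n_{\beta} \le \gamma$ and a subspace lattice is self-dual, each $L_{\alpha}$ and each $M_{\beta}$ (the latter in reverse inclusion) is isomorphic to a principal interval $[\{0\}, \cdot]$ of $L$, and extending $f$ by $+\infty$ off the product of these intervals adds only unary submodular interval-indicator terms. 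Either way $f$ has the form~(\ref{eq:zivny}) with $m = 2$ and $N = O(\mu\nu + \mu + \nu) = O(mn)$ terms, on a product of $\mu + \nu \le m + n$ factors each of size at most the number of subspaces of ${\bf F}^{\gamma}$. Hence Theorem~\ref{thm:minimizer} yields $\PPIP{B}$ within the claimed time, and by Theorem~\ref{thm:upperBound} it has $O((m+n)\,|L|)$ elements.

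Next I would extract a maximal chain from $\PPIP{B}$ by the greedy procedure stated just before the theorem: starting from $\subseteqInd{0} := \emptyset$, repeatedly pick a minimal $p \in \PPIP{B} \setminus \subseteqInd{k}$ and set $\subseteqInd{k+1} := \subseteqInd{k} \cup \{p\} \cup \{r \mid \exists q \in \subseteqInd{k},\ C(p,q,r)\}$. By Lemma~\ref{lem:joinSemiModular} each $\subseteqInd{k+1}$ is a consistent subspace that covers $\subseteqInd{k}$, so after at most $|\PPIP{B}|$ rounds — each carried out in time polynomial in $|\PPIP{B}|$ using the precomputed Hasse diagram, inconsistency relation, and collinear triples — one obtains a maximal chain $\emptyset = \subseteqInd{0} \prec \subseteqInd{1} \prec \cdots \prec \subseteqInd{\kappa} = \PPIP{B}$. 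Transporting it through the isomorphism $\Csub{\PPIP{B}} \cong B$ of Theorem~\ref{theo:BirkhoffSemiModular} gives a maximal chain $(X^0,Y^0) \prec \cdots \prec (X^{\kappa},Y^{\kappa})$ of maximum vanishing subspaces with $X^i_{\alpha} \subseteq X^{i+1}_{\alpha}$ and $Y^i_{\beta} \supseteq Y^{i+1}_{\beta}$. Feeding this chain into the construction of~\cite{ito1994blocktriangularizations, hirai2016computing} and performing the associated changes of basis (Gaussian elimination over ${\bf F}$, polynomial in $m$ and $n$) produces the most refined block-triangular form, i.e.\ the DM-decomposition of $A$, within the stated bound.

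Since the conceptual work is inherited from earlier sections, the remaining effort is essentially bookkeeping; the points needing the most care are: (i) realizing the non-uniform product $\prod_{\alpha} L_{\alpha} \times \prod_{\beta} M_{\beta}$ inside the uniform framework of Theorem~\ref{thm:minimizer} while keeping the objective of the required form and checking that $|L|$ is exactly the advertised parameter (the number of subspaces of ${\bf F}^{\gamma}$); and (ii) verifying that the greedy step produces a genuine \emph{cover} — this is precisely where Lemma~\ref{lem:joinSemiModular} enters — so that the chain it builds is maximal and the resulting triangularization is the finest one rather than merely some coarser block-triangular form. I anticipate (ii) — arguing maximality, not just producing \emph{some} chain, and bounding the number of stages — to be the main thing to check carefully, though neither point is deep.
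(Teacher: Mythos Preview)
Your proposal is correct and follows essentially the same route as the paper: reformulate MVSP as submodular minimization of the form~(\ref{eq:zivny}), invoke Theorem~\ref{thm:minimizer} to obtain $\PPIP{B}$, build a maximal chain by the greedy cover-step justified via Lemma~\ref{lem:joinSemiModular}, and finish with the change-of-basis construction of~\cite{ito1994blocktriangularizations,hirai2016computing}. The paper presents this argument as the discussion immediately preceding the theorem rather than as a separate proof; your write-up is in fact more careful on point~(i), since the paper simply asserts that Theorem~\ref{thm:minimizer} applies without spelling out the embedding of the non-uniform product $\prod_\alpha L_\alpha \times \prod_\beta M_\beta$ into a uniform $L^{\mu+\nu}$.
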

This is the first general framework to compute 
the DM-decomposition of partitioned matrix on a finite field.
\begin{ex}
\label{ex:maximalchain}
Let us compute the DM-decomposition of matrix $A$ on ${\bf F} := {\rm GF}(2)$ defined as
\[A  =: \left(
    \begin{array}{cc|cc|cc}
      1 & 1 & 0 & 1 & 0 & 1\\
      0 & 1 & 0 & 0 & 1 & 1\\ \hline
      0 & 0 & 0 & 0 & 1 & 1\\
      0 & 0 & 0 & 0 & 1 & 0\\ \hline
      1 & 1 & 1 & 1 & 0 & 1\\
      0 & 1 & 1 & 0 & 0 & 0
    \end{array}
  \right). \]
 The PPIP representation (or projective ordered space) of 
 the modular lattice of maximum vanishing spaces
 is illustrated in Figure \ref{fig:maximalChain}.
Let $\bm{e}_1 := (0,1)^\top$, $\bm{e}_2 := (1,1)^\top$, and $\bm{e}_3 := (1,0)^\top$.
Each \joinIrr element corresponds to the subspace defined by
\begin{enumerate}
  \item $(\{0\},  \mathrm{span}(\bm{e}_1) , \{0\} , {\bf F}^2 , {\bf F}^2 ,\mathrm{span}(\bm{e}_1)))$,
  \item $(\{0\} , \mathrm{span}(\bm{e}_2) , \{0\} ,  {\bf F}^2 , {\bf F}^2 , \mathrm{span}(\bm{e}_3))$,
  \item $(\{0\}, \mathrm{span}(\bm{e}_3), \{0\},  {\bf F}^2, {\bf F}^2, \mathrm{span}(\bm{e}_2))$,
  \item $(\mathrm{span}(\bm{e}_2), {\bf F}^2, \mathrm{span}(\bm{e}_2), \mathrm{span}(\bm{e}_1), \mathrm{span}(\bm{e}_3), \{0\})$,
  \item $(\mathrm{span}(\bm{e}_1), \mathrm{span}(\bm{e}_3), \{0\}, \mathrm{span}(\bm{e}_3), {\bf F}^2, \mathrm{span}(\bm{e}_2))$,
  \item $(\mathrm{span}(\bm{e}_1), \mathrm{span}(\bm{e}_3), \mathrm{span}(\bm{e}_1), \mathrm{span}(\bm{e}_3), \mathrm{span}(\bm{e}_1), \mathrm{span}(\bm{e}_2))$,
\end{enumerate}
in the numerical order, where $\mathrm{span}(v)$ is the subspace spanned by a vector $v$.
From this maximal chain, the DM-decomposition of $A$ is obtained as follows:
\[\left(
    \begin{array}{cccccc}
      1 & 1 & 0 & 0 & 1 & 0\\
      1 & 1 & 0 & 0 & 0 & 1\\
      0 & 0 & 1 & 1 & 0 & 0\\
      0 & 0 & 0 & 1 & 1 & 1\\
      0 & 0 & 0 & 0 & 1 & 0\\
      0 & 0 & 0 & 0 & 0 & 1
    \end{array}
  \right)
  =\left(
    \begin{array}{cccccc}
      1 & 1 & 0 & 0 & 0 & 0\\
      0 & 0 & 0 & 0 & 1 & 1\\
      0 & 0 & 0 & 0 & 0 & 1\\
      0 & 1 & 0 & 0 & 0 & 0\\
      0 & 0 & 0 & 1 & 0 & 0\\
      0 & 0 & 1 & 1 & 0 & 0
    \end{array}
  \right)
 \left(
    \begin{array}{cc|cc|cc}
      1 & 1 & 0 & 1 & 0 & 1\\
      0 & 1 & 0 & 0 & 1 & 1\\ \hline
      0 & 0 & 0 & 0 & 1 & 1\\
      0 & 0 & 0 & 0 & 1 & 0\\ \hline
      1 & 1 & 1 & 1 & 0 & 1\\
      0 & 1 & 1 & 0 & 0 & 0
    \end{array}
  \right)
  \left(
    \begin{array}{cccccc}
      1 & 0 & 0 & 0 & 0 & 0\\
      0 & 0 & 0 & 1 & 0 & 0\\
      0 & 0 & 1 & 0 & 0 & 0\\
      0 & 1 & 0 & 0 & 0 & 0\\
      0 & 0 & 0 & 0 & 1 & 0\\
      0 & 0 & 0 & 0 & 0 & 1
    \end{array}
  \right).
  \]
\end{ex}
\begin{figure}[t]
  \centering
  \includegraphics[clip,width=4.5cm]{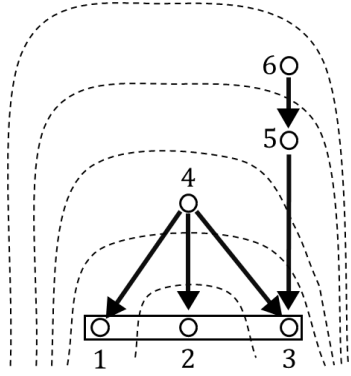}
  \caption{Construction of maximal chain of the minimizers of the MVSP corresponding to the matrix $A$ in Example \ref{ex:maximalchain}. Dots and arrows constitute the Hasse diagram. Three elements in the rectangular box are collinear. Dotted lines represent a maximal chain of consistent subspaces. }
  \label{fig:maximalChain}
\end{figure}
\subsection*{Acknowledgments}
We thank Taihei Oki for helpful comments and discussion on Theorem \ref{thm:upperBound}.
We also thank Kazuo Murota for careful reading and comments.
This research is supported by JSPS KAKENHI Grant Numbers, 25280004, 26330023, 26280004, 17K00029.


\end{document}